\documentclass[11pt,reqno,oneside]{amsart}

\usepackage[T1]{fontenc}  
\usepackage{textcomp}     
\usepackage{lmodern}      
\usepackage{graphicx}
\usepackage{amssymb}
\usepackage{epstopdf}
\usepackage[a4paper, total={5.56in, 9in}]{geometry}
\usepackage{array} 
\usepackage{hyperref}
\usepackage{bm}
\hypersetup{hypertex=true,colorlinks=true,linkcolor=blue,anchorcolor=blue,citecolor=blue}
\usepackage{amsmath,amsfonts,amsthm,mathrsfs,amssymb,cite}
\usepackage[usenames]{color}
\usepackage{xcolor}

\newtheorem{theorem}{Theorem}[section]

\theoremstyle{definition}
\newtheorem{definition}{Definition}[section]

\theoremstyle{remark}
\newtheorem{remark}{Remark}[section]
\numberwithin{equation}{section}
\numberwithin{equation}{section}

\theoremstyle{remark}
\usepackage{cite}
\usepackage{enumerate}
\usepackage{glossaries}
\makeglossaries  
\allowdisplaybreaks

\DeclareMathOperator{\curl}{curl}

\def\dif{\mathop{}\hphantom{\mskip-\thinmuskip}\mathrm{d}}
\let\daccent\d
\let\d\relax
\newcommand\d{\ifmmode\dif\else\expandafter\daccent\fi}
\allowdisplaybreaks[4]

\title[Spectral structures of EEITEP]{Spectral structures of  elastic-electromagnetic transmission eigenvalue problems}

\author{Huaian Diao}
\address{School of Mathematics and Key Laboratory of Symbolic Computation and Knowledge Engineering of Ministry of Education, Jilin University, Changchun, China}
\email{diao@jlu.edu.cn, hadiao@gmail.com}

\author{Xinyu Ding}
\address{Chengdu Liewu High School of Sichuan Province, Chengdu 610021, China}
\email{xydingmath@163.com}

\author{Yueran Geng}
\address{School of Mathematics, Jilin University, Changchun 130012, China\vspace*{-2mm}}
\address{and\vspace*{-2mm}}
\address{Department of Mathematics, City University of Hong Kong, Kowloon, Hong Kong SAR, China}
\email{gengyr23@mails.jlu.edu.cn, yuerageng2-c@my.cityu.edu.hk}

\author{Hongyu Liu}
\address{Department of Mathematics, City University of Hong Kong, Kowloon, Hong Kong SAR, China}
\email{hongyu.liuip@gmail.com, hongyliu@cityu.edu.hk}

\begin{document}
\maketitle

\begin{abstract}

The time-harmonic elastic-electromagnetic interior transmission eigenvalue problem (EEITEP) arises when an elastic body becomes invisible to an incident electromagnetic wave. This spectral problem is typically non-elliptic and non-self-adjoint, making its analysis delicate. In this paper, we study the discreteness of transmission eigenvalues and the boundary localization of the associated eigenfunctions. For a general bounded Lipschitz domain, we prove that the set of positive transmission eigenvalues, if non-empty, is discrete with $\infty$ as its only possible accumulation point. For a radially symmetric domain, we demonstrate the existence of a sequence of transmission eigenvalues and derive their asymptotic behavior. We rigorously show that the corresponding transmission eigenfunctions exhibit boundary localization in their electromagnetic components, whereas the elastic displacement field remains globally distributed throughout the domain. Finally, we derive lower bounds for the $L^{\infty}$-norms of the electromagnetic gradients normalized by their $L^2$-norms, quantifying their blow-up behavior near the boundary along this sequence. These findings reveal a potential spectral mechanism for developing super-resolution imaging methods in elastic-electromagnetic scattering.

		\medskip
\noindent{\bf Keywords:}~~elastic-electromagnetic interior transmission eigenvalue problem, transmission eigenvalues, discreteness, transmission eigenfunctions, boundary localization, gradient blow-up
		
		\medskip
\noindent{\bf 2020 Mathematics Subject Classification:}~~35P15, 35P20, 35Q61, 35Q74, 35P25
\end{abstract}

\section{Introduction} \label{section_1}

\subsection{Mathematical setup and physical relevance}

We initially focus on the mathematical model rather than its physical aspects. Let $\Omega \subset \mathbb{R}^3$ be a bounded Lipschitz domain with a connected complement, and let $\bm{\nu}$ denote the unit outward normal to $\partial \Omega$. Let $ c_0, \rho > 0$, and assume that the Lam\'e constants $\lambda, \mu \in \mathbb{R}$ satisfy the strong convexity condition
\begin{align} \label{eq_intro_strongconvexity}
	\mu > 0, \quad 3\lambda + 2\mu > 0.
\end{align}
We consider the elastic-electromagnetic interior transmission eigenvalue problem (EEITEP):
\begin{align} \label{eq_intro_EHuOmega}
\begin{cases}
	\mathbf{curl}\, \bm{E} - \mathrm{i} k \bm{H} = \bm{0}, \quad \mathbf{curl}\, \bm{H} + \mathrm{i} k \bm{E} = \bm{0} \quad & \text{in } \Omega, \\
	\mathcal{L} \bm{u} + k^2 c_0^2 \rho \bm{u} = \bm{0} \quad & \text{in } \Omega, \\
	\frac{\mathrm{i}}{k} \bm{\nu} \times \bm{H} = T_{\bm{\nu}} \bm{u} \quad & \text{on } \partial\Omega, \\
	\bm{\nu} \times \bm{E} = \bm{\nu} \times \bm{u} \quad & \text{on } \partial\Omega,
\end{cases}
\end{align}
where the differential operator $\mathcal{L}$ is defined as
\begin{align*}
	\mathcal{L}\bm{u} := \mu \Delta \bm{u} + (\lambda+\mu) \nabla \nabla \cdot \bm{u},
\end{align*}
and the surface traction operator $T_{\bm{\nu}}$ is given by
\begin{align*}
	T_{\bm{\nu}}\bm{u} := \lambda(\nabla\cdot\bm{u})\bm{\nu} + 2\mu(\nabla^{\mathrm{s}}\bm{u})\bm{\nu}, \quad \nabla^{\mathrm{s}}\bm{u} := \frac{\nabla\bm{u}+(\nabla\bm{u})^{\top}}{2}.
\end{align*}
If \eqref{eq_intro_EHuOmega} admits a non-trivial solution $(\bm{E}, \bm{H}, \bm{u}) \in H(\mathbf{curl}, \Omega) \times H(\mathbf{curl}, \Omega) \times H^1(\Omega)^3$, then $k$ is called an elastic-electromagnetic transmission eigenvalue, and $(\bm{E}, \bm{H}, \bm{u})$ is called an associated transmission eigenfunction. Here, the space $H(\mathbf{curl}, \Omega)$ is defined as
\begin{equation*}
    H(\mathbf{curl}, \Omega) := \{ \bm{P} \in L^2(\Omega)^3; \mathbf{curl}\, \bm{P} \in L^2(\Omega)^3 \},
\end{equation*}
equipped with the standard norm $\|\bm{P}\|^2_{H(\mathbf{curl}, \Omega)} = \|\bm{P}\|^2_{L^2(\Omega)^3} + \|\mathbf{curl}\, \bm{P}\|^2_{L^2(\Omega)^3}$.

The system \eqref{eq_intro_EHuOmega} arises from the interaction between an electromagnetic field and an elastic body. This interaction model was formulated and analyzed in \cite{cakoni2002mathematical}, where uniqueness, equivalent integral equations, and non-local variational formulations were derived. More precisely, let $\Omega$ denote a homogeneous isotropic elastic body with density $\rho$. Its physical properties are described by the Lam\'e constants $\lambda$ and $\mu$, which satisfy the strong convexity condition \eqref{eq_intro_strongconvexity}. The electromagnetic background medium $\mathbb{R}^3 \setminus \Omega$ is characterized by the electric permittivity $\varepsilon_0 > 0$ and magnetic permeability $\mu_0 > 0$. The wave speed in the electromagnetic background medium is given by
\begin{align*}
	c_0 = \frac{1}{\sqrt{\varepsilon_0\mu_0}},
\end{align*} 
and for a given frequency $\omega \in \mathbb{R}_+$, the corresponding wavenumber is
\begin{equation}\label{eq:dfn k}
k = \omega/c_0.
\end{equation}
Consider the time-harmonic incident electromagnetic waves of the form
\begin{align*} 
	\bm{E}^{\mathrm{in}}(\bm{x}, \bm{d}, \bm{p};k) = -\frac{1}{\mathrm{i} k} \mathbf{curl}\, \mathbf{curl}\, (\bm{p} e^{\mathrm{i}k \bm{x} \cdot \bm{d}}), \quad \bm{H}^{\mathrm{in}}(\bm{x}, \bm{d}, \bm{p};k) = \mathbf{curl}\, (\bm{p} e^{\mathrm{i} k \bm{x} \cdot \bm{d}}), \quad \bm{x} \in \mathbb{R}^3,
\end{align*}
where $\bm{d} \in \mathbb{S}^2$ denotes the propagation direction and $\bm{p} \in \mathbb{R}^3$ is a polarization vector satisfying $\bm{p}\cdot\bm{d}=0$. These incident fields satisfy the Maxwell equations
\begin{align*}
	\mathbf{curl}\, \bm{E}^{\mathrm{in}} - \mathrm{i} k \bm{H}^{\mathrm{in}} = \bm{0}, \quad \mathbf{curl}\, \bm{H}^{\mathrm{in}} + \mathrm{i} k \bm{E}^{\mathrm{in}} = \bm{0} \quad \text{in } \mathbb{R}^3.
\end{align*}
The interaction between $(\bm{E}^{\mathrm{in}}, \bm{H}^{\mathrm{in}})$ and the elastic body $\Omega$ generates a scattered electromagnetic field $(\bm{E}, \bm{H})$ in $\mathbb{R}^3 \setminus \overline{\Omega}$ and an elastic displacement field $\bm{u}$ in $\Omega$. The total electromagnetic field is defined as the superposition of the incident and scattered electromagnetic fields:
\begin{align*}
    (\bm{E}^{\mathrm{tot}}, \bm{H}^{\mathrm{tot}}) := (\bm{E}^{\mathrm{in}}, \bm{H}^{\mathrm{in}}) + (\bm{E}, \bm{H}).
\end{align*}
Across the interface $\partial \Omega$, transmission conditions are imposed according to Voigt's model \cite{maugin2013continuum}. In this model, the interaction between the total electromagnetic field and the elastic displacement takes place only through the boundary $\partial \Omega$. The corresponding time-harmonic direct elastic-electromagnetic scattering problem is then formulated as follows \cite{cakoni2002mathematical}
\begin{align} \label{eq_intro_EE_R3}
	\begin{cases}
		\mathbf{curl}\, \bm{E} - \mathrm{i} k \bm{H} = \bm{0}, \quad \mathbf{curl}\, \bm{H} + \mathrm{i} k \bm{E} = \bm{0} \quad & \text{in } \mathbb{R}^3\setminus\overline{\Omega}, \\
		\mathcal{L} \bm{u} + \omega^2 \rho \bm{u} = \bm{0} \quad & \text{in } \Omega,  \\
		\frac{\mathrm{i}}{k} \bm{\nu} \times (\bm{H} + \bm{H}^{\mathrm{in}}) = T_{\bm{\nu}} \bm{u} \quad & \text{on } \partial\Omega, \\
		\bm{\nu} \times (\bm{E} + \bm{E}^{\mathrm{in}}) = \bm{\nu} \times \bm{u} \quad & \text{on } \partial\Omega.
	\end{cases}
\end{align}
The elastic displacement $\bm{u}$ admits the Helmholtz decomposition
\begin{align*}
	\bm{u} = \bm{u}_{\mathrm{p}} + \bm{u}_{\mathrm{s}},
\end{align*} 
where the compressional (P-wave) and shear (S-wave) components are given by
\begin{align*} 
 \bm{u}_{\mathrm{p}} = - \frac{1}{k_{\mathrm{p}}^2} \nabla \nabla \cdot \bm{u}, \quad \bm{u}_{\mathrm{s}} = \frac{1}{k_{\mathrm{s}}^2} \mathbf{curl}\, \mathbf{curl}\, \bm{u} .
\end{align*}
The corresponding compressional and shear wavenumbers are
\begin{align} \label{eq_intro_kpks}
	k_{\mathrm{p}} = \frac{\omega}{c_{\mathrm{p}}}, \quad k_{\mathrm{s}} = \frac{\omega}{c_{\mathrm{s}}},
\end{align}
where the compressional and shear wave speeds are defined as
\begin{align} \label{def_intro_cscp}
	c_{\mathrm{p}} = \sqrt{\frac{\lambda + 2\mu}{\rho}}, \quad c_{\mathrm{s}} = \sqrt{\frac{\mu}{\rho}}.
\end{align}

To complete the formulation of the direct elastic-electromagnetic scattering problem \eqref{eq_intro_EE_R3}, the Silver-M\"uller radiation condition is introduced:
\begin{align} \label{eq_intro_SMRC}
	\lim_{r \to \infty} \left( \bm{H}(\bm{x}) \times \hat{\bm{x}} - \bm{E}(\bm{x}) \right) = \bm{0}, \quad r := |\bm{x}|,
\end{align}
uniformly in all directions $\hat{\bm{x}} := \bm{x}/|\bm{x}| \in \mathbb{S}^2$. The Silver-M\"uller radiation condition \eqref{eq_intro_SMRC} ensures the outgoing nature of the scattered electromagnetic field $(\bm{E}, \bm{H})$. The well-posedness results for \eqref{eq_intro_EE_R3} under \eqref{eq_intro_SMRC} can be found in \cite{bernardo2010analysis, cakoni2002mathematical, GH10, zhu2022recovering}. Furthermore, the radiation condition \eqref{eq_intro_SMRC} implies that the scattered field $(\bm{E}, \bm{H})$, corresponding to the incident plane wave with propagation direction $\bm{d} \in \mathbb{S}^2$ and polarization $\bm{p} \in \mathbb{R}^3$, admits the asymptotic expansion
\begin{align}
	\bm{E}(\bm{x}, \bm{d}, \bm{p};k) &= \frac{e^{\mathrm{i}kr}}{r} \bm{E}^{\infty}(\hat{\bm{x}}, \bm{d}, \bm{p};k) + O(r^{-2}), \label{eq_Einfty}\\
	\bm{H}(\bm{x}, \bm{d}, \bm{p};k) &= \frac{e^{\mathrm{i}kr}}{r} \bm{H}^{\infty}(\hat{\bm{x}}, \bm{d}, \bm{p};k) + O(r^{-2}), \label{eq_Hinfty}
\end{align}
as $r \to \infty$, where $\bm{E}^{\infty}$ and $\bm{H}^{\infty}$ denote the electric and magnetic far-field patterns respectively.

The scattering problem \eqref{eq_intro_EE_R3} under the radiation condition \eqref{eq_intro_SMRC} gives rise to the forward operator 
\begin{align} \label{eq:IP}
	\mathcal{F}\big((\Omega; \lambda, \mu, \rho), (\bm{E}^{\mathrm{in}}, \bm{H}^{\mathrm{in}})\big) = (\bm{E}^{\infty}, \bm{H}^{\infty}),
\end{align}
which maps the elastic body and the incident field to the corresponding far-field patterns. By the well-posedness of the direct problem, the operator $\mathcal{F}$ is continuous. A related inverse problem is to determine the configuration $(\Omega; \lambda, \mu, \rho)$ from knowledge of the incident field and the measured far-field patterns. This inverse problem arises in various applications, such as geophysical exploration \cite{carcione2007wave} and medical imaging \cite{manduca2001magnetic}. In general, however, this inverse problem is ill-posed.

In this paper, our interest lies in the kernel of $\mathcal{F}$, which is closely related to the concept of invisibility in inverse scattering. Indeed, if the far-field pattern $(\bm{E}^{\infty}, \bm{H}^{\infty})$ vanishes for all observation directions $\hat{\bm{x}} \in \mathbb{S}^2$, then Rellich's lemma \cite{colton2019inverse} implies that the scattered field $(\bm{E}, \bm{H})$ vanishes identically in $\mathbb{R}^3 \setminus \overline{\Omega}$. In this case, the elastic body $\Omega$ is invisible to the incident field $(\bm{E}^{\mathrm{in}}, \bm{H}^{\mathrm{in}})$. Furthermore, if $(\bm{E}, \bm{H}) = \bm{0}$ in $\mathbb{R}^3 \setminus \overline{\Omega}$, it follows from the transmission conditions in \eqref{eq_intro_EE_R3} that the triplet $(\bm{E}^{\mathrm{in}}, \bm{H}^{\mathrm{in}}, \bm{u})$ satisfies the EEITEP \eqref{eq_intro_EHuOmega} in $\Omega$. Thus, the restriction of the incident field $(\bm{E}^{\mathrm{in}}, \bm{H}^{\mathrm{in}})$ to $\Omega$, together with the induced displacement $\bm{u}$, constitutes a transmission eigenfunction, and the corresponding wavenumber $k$ is a transmission eigenvalue. This connection demonstrates that non-scattering is naturally linked to the EEITEP, thereby motivating our rigorous study of this problem.

\subsection{Connection to existing results and summary of main findings}

The interior transmission eigenvalue problem arises naturally in inverse scattering theory and has become an important and active area of research. This problem plays a key role in uniqueness, reconstruction, and invisibility cloaking in inverse scattering \cite{cakoni2022inverse}. Introduced by Kirsch \cite{kirsch1986denseness} and Colton and Monk \cite{colton1988inverse}, it is a non-elliptic and non-self-adjoint eigenvalue problem, and its analysis is therefore particularly delicate.

A substantial literature has been devoted to the properties of transmission eigenvalues, including existence, discreteness, and Weyl-type asymptotics. Existence results have been established systematically for acoustic media \cite{paivarinta2008transmission}, and related results for the electromagnetic and elastic cases can be found in \cite{kirsch2009existence, bellis2013nature}. Discreteness is essential not only for spectral theory but also for numerical reconstruction. In particular, in sampling methods for recovering the support of a scatterer, probing frequencies must be chosen away from transmission eigenvalues, since the reconstruction may otherwise fail \cite{colton2019inverse}. 

For acoustic media, discreteness results can be found in \cite{sylvester2012discreteness}. Corresponding results for other physical models, including electromagnetic, elastic, and acoustic-elastic systems, have also been obtained \cite{chesnel2012interior, bellis2013nature, monk2009inverse}. Furthermore, Weyl-type formulas and other spectral asymptotic properties of transmission eigenvalues have been studied extensively \cite{colton2007interior}.  For a comprehensive treatment of these topics, we refer the reader to the monograph  \cite{cakoni2022inverse}.

Beyond spectral properties, the connection between transmission eigenvalues and non-scattering phenomena is also a central aspect of the theory. Invisibility is characterized by the fact that a scatterer cannot be detected by exterior measurements, that is, the scattered field generated by the incident wave vanishes identically outside the scatterer. In particular, every non-scattering wavenumber is an interior transmission eigenvalue, whereas the converse generally fails to hold. We refer to \cite{cakoni2022inverse} for a comprehensive discussion of this relation. Consequently, transmission eigenfunctions are understood to encode rich geometric information about the scatterer, providing a bridge between spectral theory and inverse scattering.

In recent years, substantial progress has been made in the study of geometric behavior of transmission eigenfunctions. Under suitable local regularity assumptions, it was shown in \cite{blaasten2014corners} that transmission eigenfunctions must vanish at corner points. This result was subsequently extended to other geometric singularities, including corners, edges, and points of high curvature \cite{blaasten2018nonradiating, blaasten2021scattering, cakoni2020corner, diao2023geometrical, diao2025non}, where corresponding vanishing properties were established. More recently, new methods based on free boundary problems were introduced into the study of non-scattering phenomena in \cite{cakoni2023singularities} and \cite{salo2021free}.

The non-scattering results described above are local in nature, since they primarily consider the behavior of transmission eigenfunctions near singular boundary points. By contrast, a global feature of transmission eigenfunctions, namely boundary localization, was identified in \cite{chow2021surface, deng2022new, zhou2025quasi}. More precisely, an infinite sequence of transmission eigenfunctions was constructed whose $L^2$-energy is concentrated in a thin neighborhood of the boundary. Unlike corner vanishing or related local phenomena, boundary localization concerns the global spatial distribution of transmission eigenfunctions throughout the entire domain and thus provides a different perspective on their structure. This phenomenon is also relevant to applications such as super-resolution imaging \cite{tilley2020colour}. At present, however, such results have been obtained mainly for scalar models or relatively simple coupled systems. For more complicated multiphysics vector-valued systems, the corresponding spectral properties and the global geometric behavior of transmission eigenfunctions remain largely unexplored.

In this paper, we consider the EEITEP \eqref{eq_intro_EHuOmega}. Compared with scalar acoustic models, the system \eqref{eq_intro_EHuOmega} is substantially more involved, as it consists of three coupled vector-valued partial differential equations describing the interaction between elastic and electromagnetic fields. Moreover, the spectral parameter $k$ appears not only in the governing equations but also explicitly in the boundary transmission conditions. These features introduce significant challenges to both the spectral analysis of the transmission eigenvalues and the investigation of boundary localization.

We now summarize the main findings of our work. To study the discreteness of the interior transmission eigenvalues for the EEITEP \eqref{eq_intro_EHuOmega} defined on a general bounded Lipschitz domain, we first introduce an associated auxiliary system. When the material parameters of this auxiliary system are complex-valued, its well-posedness is established by variational methods. In particular, we show that positive elastic-electromagnetic transmission eigenvalues do not exist in this regime. The case of real-valued material parameters is then considered. We prove that the set of positive transmission eigenvalues, if non-empty, is discrete, with $\infty$ as its only possible accumulation point. Since positive non-scattering wavenumbers form a subset of the positive transmission eigenvalues of the EEITEP \eqref{eq_intro_EHuOmega}, this result implies that the possible positive non-scattering wavenumbers in the elastic-electromagnetic scattering problem \eqref{eq_intro_EE_R3} and \eqref{eq_intro_SMRC} can only form a discrete set. The proof relies on a decomposition of the associated operator into an invertible part and a compact perturbation, followed by an application of analytic Fredholm theory. To the best of our knowledge, this gives the first discreteness result for elastic-electromagnetic transmission eigenvalues under relatively mild assumptions on the geometry and material parameters.

We next study the EEITEP \eqref{eq_intro_EHuOmega} in a radially symmetric domain and provide a rigorous quantitative analysis of the boundary localization and gradient blow-up behavior of the associated transmission eigenfunctions near the boundary. We first establish the existence of a sequence of transmission eigenvalues of \eqref{eq_intro_EHuOmega} and derive their asymptotic expressions as they tend to infinity. In connection with the invisibility phenomenon in the scattering problem \eqref{eq_intro_EE_R3}, this result identifies and quantitatively characterizes a sequence of non-scattering wavenumbers for a spherical elastic scatterer in the high-wavenumber regime. We then analyze a corresponding family of transmission eigenfunctions $(\bm{E}, \bm{H}, \bm{u})$ given by \eqref{def_bl_En}-\eqref{def_bl_un}. It is shown that the electromagnetic components $(\bm{E}, \bm{H})$ are boundary-localized in the sense of Definition \ref{def:bd}, whereas the elastic component $\bm{u}$ does not display such concentration. In particular, the $L^2$-norm of the electromagnetic fields is concentrated in a thin neighborhood of the boundary, while the elastic part remains globally distributed throughout the domain. To quantify the concentration phenomenon of electromagnetic components $(\bm{E}, \bm{H})$, lower bounds are derived for the $L^{\infty}$-norms of their gradients normalized by their $L^2$-norms. These estimates characterize the boundary blow-up rate of the electromagnetic gradients near the boundary and reveal the highly oscillatory nature of the corresponding transmission eigenfunctions near the boundary. These estimates characterize the blow-up of the electromagnetic gradients near the boundary and reveal the highly oscillatory nature of the corresponding transmission eigenfunctions near the boundary. Motivated by this boundary-localized structure, we further discuss in the final section how the corresponding electromagnetic modes may potentially be exploited for super-resolution imaging of elastic scatterers from electromagnetic far-field measurements.

The remainder of the paper is organized as follows. In Section \ref{section_2}, we establish the well-posedness of an auxiliary problem of EEITEP \eqref{eq_intro_EHuOmega}, and prove the discreteness of transmission eigenvalues for \eqref{eq_intro_EHuOmega}. In Section \ref{section_3}, we investigate the boundary localization properties of the corresponding transmission eigenfunctions, showing that the electromagnetic component is boundary-localized, whereas the elastic component is shown not to exhibit such behavior. In Section \ref{section_4}, we derive lower bounds for the $L^{\infty}$-norms of the electromagnetic gradients normalized by their $L^2$-norms near the boundary, and establish their blow-up behavior as the transmission eigenvalues tend to infinity. Finally, in Section \ref{section_5}, we summarize the main results and discuss a potential super-resolution imaging framework for recovering elastic scatterers from electromagnetic far-field measurements.

\section{Discreteness of transmission eigenvalues} \label{section_2}

In this section, we investigate the discreteness of transmission eigenvalues for the EEITEP \eqref{eq_intro_EHuOmega}. We first introduce an auxiliary system and establish its well-posedness for complex-valued material parameters, which excludes positive transmission eigenvalues in that regime. We then turn to real-valued parameters and show that the set of positive transmission eigenvalues, if non-empty, is discrete, with $+\infty$ as its only possible accumulation point.

Consider the auxiliary PDE system
\begin{align} \label{eq_wp_HuOmega0}
	\begin{cases}
		\mathbf{curl}\, \mathbf{curl}\, \bm{H} - k^2 \bm{H} = \bm{0} \quad&\text{in } \Omega, \\
		\mathcal{L} \bm{u} + k^2 c_0^2 \rho \bm{u}=\bm{0} \quad&\text{in } \Omega,  \\
		\frac{\mathrm{i}}{k} \bm{\nu}\times \bm{H} - T_{\bm{\nu}} \bm{u} = \bm{f} \quad&\text{on } \partial\Omega, \\
		\frac{1}{\mathrm{i}k} \bm{\nu}\times \mathbf{curl}\, \bm{H} + \bm{\nu}\times \bm{u} = \bm{g} \quad&\text{on } \partial\Omega, 
	\end{cases}
\end{align}
where $\bm{f}\in H^{-1/2}\left(\partial\Omega\right)^3$ and $\bm{g}\in H^{-1/2}_{\operatorname{div}}\left(\partial\Omega\right)$. The tangential trace space used below is defined by
\begin{align*}
	H^{-1/2}_{\operatorname{div}}\left(\partial\Omega\right) := \left\{\bm{g}\in H^{-1/2}\left(\partial\Omega\right)^3; \bm{\nu} \cdot \bm{g} = 0 \text{ on } \partial \Omega, \operatorname{div}_{\partial\Omega} \bm{g} \in H^{-1/2}\left(\partial\Omega\right)\right\},
\end{align*}
where $\operatorname{div}_{\partial\Omega}$ and $\mathrm{curl}_{\partial\Omega}$ denote the surface divergence and surface curl on $\partial\Omega$ respectively. In fact, the homogeneous system \eqref{eq_wp_HuOmega0} is equivalent to the EEITEP \eqref{eq_intro_EHuOmega}. When $\bm{f}=\bm{g}=\bm{0}$, the system \eqref{eq_wp_HuOmega0} is obtained from \eqref{eq_intro_EHuOmega} by eliminating $\bm{E}$. Conversely, if $\left(\bm{H},\bm{u}\right)$ solves the homogeneous system \eqref{eq_wp_HuOmega0}, then the triple $\left(\bm{E},\bm{H},\bm{u}\right)$, with
\begin{align} \label{def_wp_EH}
	\bm{E}=\frac{\mathrm{i}}{k} \mathbf{curl}\, \bm{H}
\end{align}
also satisfies \eqref{eq_intro_EHuOmega}. Hence the EEITEP \eqref{eq_intro_EHuOmega} and the homogeneous system \eqref{eq_wp_HuOmega0} have the same transmission eigenvalues, and their eigenfunctions are in one-to-one correspondence through the relation \eqref{def_wp_EH}. For simplicity, the eigenvalues of the homogeneous system \eqref{eq_wp_HuOmega0} will also be referred to as elastic-electromagnetic interior transmission eigenvalues. 

To establish well-posedness of the auxiliary PDE system \eqref{eq_wp_HuOmega0} for complex-valued material parameters in a variational framework, and to prepare for the application of analytic Fredholm theory in proving the discreteness of transmission eigenvalues, we introduce the following function spaces. Define
\begin{align*}
	X_{T,0} := \left\{\bm{H}\in H\left(\mathbf{curl},\Omega\right); \nabla \cdot \bm{H} = 0 \text{ in } \Omega, \bm{\nu} \cdot \bm{H} = 0 \text{ on } \partial \Omega\right\}, 
\end{align*}
equipped with the norm
\begin{align*}
	\left\|\bm{H}\right\|_{X_{T,0}} := \left(\left\|\bm{H}\right\|^2_{L^2\left(\Omega\right)^3} + \left\|\mathbf{curl}\,\bm{H}\right\|^2_{L^2\left(\Omega\right)^3}\right)^{1/2}.
\end{align*}
As shown in \cite{monk2003finite}, there exists $\delta_0\in\left(0\right.,1/2\left.\right]$ such that $X_{T,0}$ is continuously embedded into $H^{1/2+\varepsilon}\left(\Omega\right)^3$ for all $0\leq\varepsilon\leq\delta_0$. Moreover, for any $\bm{H}\in X_{T,0}$, one has
\begin{align*}
	\left\|\bm{H}\right\|_{H^{1/2+\varepsilon}\left(\Omega\right)^3} \leq c \left\|\mathbf{curl}\, \bm{H}\right\|_{L^2\left(\Omega\right)^3},
\end{align*}  
where $c$ is a constant independent of $\bm{H}$. We then define 
\begin{align*}
	\widetilde{H} := X_{T,0}\times H^1\left(\Omega\right)^3,
\end{align*} 
equipped with the norm
\begin{align*}
	\left\|\left(\bm{H},\bm{u}\right)\right\|_{\widetilde{H}} := \left(\left\|\bm{H}\right\|^2_{X_{T,0}} + \left\|\bm{u}\right\|^2_{H^1\left(\Omega\right)^3}\right)^{1/2}.
\end{align*}
It follows from Rellich's embedding theorem that the embedding 
\begin{align*}
	\widetilde{H} \hookrightarrow L^2\left(\Omega\right)^3\times L^2\left(\Omega\right)^3
\end{align*}
is compact.

\subsection{Well-posedness of \texorpdfstring{\eqref{eq_wp_HuOmega0}}{the equation} for complex parameters} 

We begin by establishing the well-posedness of \eqref{eq_wp_HuOmega0} for complex-valued material parameters. More precisely, under the admissibility conditions in Definition \ref{def_wp_lamdamurho}, we prove that the system \eqref{eq_wp_HuOmega0} is well-posed whenever $\rho\notin\mathbb{R}$ or $\mu\notin\mathbb{R}$. In particular, the corresponding homogeneous system \eqref{eq_wp_HuOmega0} admits only the trivial solution. This implies that positive transmission eigenvalues, which correspond to the failure of well-posedness, are excluded in the complex-valued setting. The real-valued case will be treated in the next subsection.

Before stating the well-posedness result, we introduce the admissibility conditions on the complex material parameters used throughout this subsection.

\begin{definition} \label{def_wp_lamdamurho}
The complex parameters $\lambda, \mu$ and $\rho$ are said to be admissible if they satisfy $\operatorname{Re} \left(\mu\right)>0$, $\operatorname{Im} \left(\mu\right)\leq0$, $\operatorname{Re} \left(\rho\right)>0$, $\operatorname{Im} \left(\rho\right)\geq0$, $\operatorname{Re} \left(\lambda+\frac{2}{3}\mu\right)>0$, and $\operatorname{Im} \left(\lambda+\frac{2}{3}\mu\right)\leq0$. In addition, when $\mu\notin\mathbb{R}$, we assume that $\operatorname{Im} \left(\lambda+\frac{2}{3}\mu\right)<0$.
\end{definition}

\begin{remark}
The complex coefficients in Definition \ref{def_wp_lamdamurho} model damping effects in the solid, for instance in viscoelastic media \cite{hsiao2000weak}. In the purely elastic case, these parameters reduce to real values satisfying the strong convexity condition \eqref{eq_intro_strongconvexity}.
\end{remark}

We now establish the well-posedness of \eqref{eq_wp_HuOmega0} for admissible material parameters.

\begin{theorem} \label{thm_wp_3}
Consider \eqref{eq_wp_HuOmega0} with $\bm{f}\in H^{-1/2}\left(\partial\Omega\right)^3$ and $\bm{g}\in H^{-1/2}_{\operatorname{div}}\left(\partial\Omega\right)$. Let $\Omega$ be a bounded Lipschitz domain with a connected complement. Assume that $ \lambda, \mu, \rho\in\mathbb{C}$ are admissible in Definition \ref{def_wp_lamdamurho}, and let $ k,c_0 > 0 $. If $\rho\notin\mathbb{R}$ or $\mu\notin\mathbb{R}$, then there exists a unique solution $\left(\bm{H},\bm{u}\right)\in \widetilde{H}$ to \eqref{eq_wp_HuOmega0} such that
\begin{align*}
	\left\|\bm{H}\right\|_{X_{T,0}} + \left\|\bm{u}\right\|_{H^1\left(\Omega\right)^3} \leq c\left(\left\|\bm{f}\right\|_{H^{-1/2}\left(\partial\Omega\right)^3} + \left\|\bm{g}\right\|_{H^{-1/2}_{\operatorname{div}}\left(\partial\Omega\right)}\right),
\end{align*}
where $c$ is a constant independent of $\bm{f}$ and $\bm{g}$. In particular, the EEITEP \eqref{eq_intro_EHuOmega} admits no positive transmission eigenvalues under the above assumptions.
\end{theorem}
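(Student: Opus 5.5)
I will derive a variational formulation of \eqref{eq_wp_HuOmega0} on $\widetilde{H}$ and show it is a Fredholm operator of index zero whose kernel is trivial when $\rho\notin\mathbb{R}$ or $\mu\notin\mathbb{R}$. The a priori estimate then follows from the bounded inverse.

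\textbf{Variational formulation.} Take test functions $(\bm{H}',\bm{u}')\in\widetilde{H}$. Multiply the Maxwell equation by $\overline{\bm{H}'}$ and integrate by parts:
\begin{align*}
\int_\Omega \mathbf{curl}\,\bm{H}\cdot\overline{\mathbf{curl}\,\bm{H}'} - k^2\bm{H}\cdot\overline{\bm{H}'}\,\mathrm{d}x + \int_{\partial\Omega}(\bm{\nu}\times\mathbf{curl}\,\bm{H})\cdot\overline{\bm{H}'_T}\,\mathrm{d}s = 0.
\end{align*}
The fourth boundary condition replaces $\bm{\nu}\times\mathbf{curl}\,\bm{H}$ by $\mathrm{i}k(\bm{g}-\bm{\nu}\times\bm{u})$. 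Similarly, Betti's formula for $\bm{u}$ gives $-\int_\Omega \sigma(\bm{u}):\overline{\nabla\bm{u}'}+k^2c_0^2\rho\int_\Omega\bm{u}\cdot\overline{\bm{u}'}+\int_{\partial\Omega}T_{\bm\nu}\bm{u}\cdot\overline{\bm{u}'}=0$. The third boundary condition replaces $T_{\bm\nu}\bm{u}$ by $\frac{\mathrm{i}}{k}\bm{\nu}\times\bm{H}-\bm{f}$.

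I will scale the two equations so that the coupling terms become a skew-Hermitian pair. The boundary terms involve $\int_{\partial\Omega}(\bm\nu\times\bm u)\cdot\overline{\bm H'}$ and $\int_{\partial\Omega}(\bm\nu\times\bm H)\cdot\overline{\bm u'}$, and these are negatives of each other's conjugate under the swap. Multiplying the Maxwell equation by a suitable real factor such as $1/k^2$ relative to the elastic one therefore makes the coupling contribute a purely imaginary or a cancelling quantity when $(\bm{H}',\bm{u}')=(\bm{H},\bm{u})$. The right-hand side is an antilinear functional bounded by $\|\bm f\|_{H^{-1/2}}+\|\bm g\|_{H^{-1/2}_{\operatorname{div}}}$; this uses the trace $\bm H\mapsto\bm H_T$ and $H^1$ traces.

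\textbf{Fredholm structure.} I split the sesquilinear form $a=a_0+b$. Here $a_0$ collects $\int\mathbf{curl}\,\bm H\cdot\overline{\mathbf{curl}\,\bm H'}+\int\bm H\cdot\overline{\bm H'}$ and the elastic stiffness form plus an $L^2$ term. On $X_{T,0}$, the curl norm controls the full norm by the embedding estimate from \cite{monk2003finite}. Korn's inequality and the admissibility conditions ($\operatorname{Re}\mu>0$, $\operatorname{Re}(\lambda+\tfrac23\mu)>0$, writing $\sigma:\nabla\bar u=2\mu|\nabla^{\mathrm s}u-\tfrac13\operatorname{div}u\,I|^2+3(\lambda+\tfrac23\mu)|\operatorname{div}u|^2$) make $a_0$ coercive after a rotation. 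The remainder $b$ consists of $L^2$ terms and the boundary coupling terms. The $L^2$ terms are compact by Rellich. The boundary coupling is compact because $X_{T,0}\hookrightarrow H^{1/2+\varepsilon}(\Omega)^3$, whose trace lies in $H^{\varepsilon}(\partial\Omega)$ and embeds compactly in $L^2(\partial\Omega)$, while $H^1$ traces lie in $H^{1/2}(\partial\Omega)\subset L^2$. Hence the operator is Fredholm of index zero.

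\textbf{Uniqueness, the main obstacle.} Set $\bm f=\bm g=\bm0$ and test with $(\bm H,\bm u)$. After the scaling, the coupling terms combine into a purely real or purely imaginary quantity. Taking the imaginary part of the elastic identity then yields
\begin{align*}
-\operatorname{Im}\mu\,\|\cdot\|^2-\operatorname{Im}(\lambda+\tfrac23\mu)\|\operatorname{div}\bm u\|^2\cdot 3+k^2c_0^2\operatorname{Im}\rho\|\bm u\|^2 = (\text{coupling, which vanishes}),
\end{align*}
with all terms having the same sign. If $\operatorname{Im}\rho>0$, this gives $\bm u=0$. If $\mu\notin\mathbb R$, it gives $\nabla^{\mathrm s}\bm u-\tfrac13\operatorname{div}\bm u\,I=0$ and $\operatorname{div}\bm u=0$ (using the extra strict assumption), so $\bm u$ is a rigid motion, and the equation $\mathcal L\bm u+k^2c_0^2\rho\bm u=0$ with $k>0$ forces $\bm u=0$.

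With $\bm u=0$, we get $T_{\bm\nu}\bm u=0$ and $\bm\nu\times\bm u=0$, so $\bm\nu\times\bm H=0$ and $\bm\nu\times\mathbf{curl}\,\bm H=0$ on $\partial\Omega$. Extending $\bm H$ by zero produces a solution of $\mathbf{curl}\,\mathbf{curl}\,\bm H-k^2\bm H=0$ in $\mathbb R^3$ with compact support. Unique continuation, or Rellich's lemma, then gives $\bm H=0$.

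I expect the delicate point to be getting the sign bookkeeping right so that the coupling really cancels in the imaginary part. I also have to handle the divergence-free constraint of $X_{T,0}$: solutions of the full problem must satisfy it, since $\operatorname{div}\bm H=0$ follows from the equation with $k\ne0$. The condition $\bm\nu\cdot\bm H=0$ may need a gradient correction, with the connected complement used to rule out harmonic fields. Finally, the Fredholm alternative gives existence and the stated bound, and homogeneous uniqueness excludes positive transmission eigenvalues.
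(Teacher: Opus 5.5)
Your proposal follows the paper's proof essentially step by step. It uses the same variational formulation on $\widetilde H$ with the elastic equation weighted by $k^2$ relative to the Maxwell one, the same coercive-plus-compact splitting (Korn's inequality, $X_{T,0}\hookrightarrow H^{1/2+\varepsilon}(\Omega)^3$, compact boundary traces, Fredholm alternative), the same uniqueness argument via the imaginary part, and unique continuation for $\bm{H}$; your sign worry resolves favorably, since the coupling terms sum to $2k\operatorname{Im}\int_{\partial\Omega}(\bm{\nu}\times\bm{u})\cdot\overline{\bm{H}}\,\mathrm{d}\sigma\in\mathbb{R}$.

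The concern you raise at the end is legitimate, and the paper's proof does not resolve it either: solutions of \eqref{eq_wp_HuOmega0} satisfy $\bm{\nu}\cdot\bm{H}=-\frac{\mathrm{i}}{k}\operatorname{div}_{\partial\Omega}(\bm{g}-\bm{\nu}\times\bm{u})$, which is generally nonzero (for the eigenfunctions of Section~\ref{section_3} it equals $\frac{b_n^m}{\mathrm{i}k_n}n(n+1)j_n(k_n)Y_n^m$ on $\partial\Omega$), so the existence part needs a space without the constraint $\bm{\nu}\cdot\bm{H}=0$, whereas the energy argument for uniqueness, and hence the exclusion of positive eigenvalues, carries over unchanged to $H(\mathbf{curl},\Omega)\times H^1(\Omega)^3$ once the boundary terms are read as duality pairings.
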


\begin{proof}
Let $k>0$. For any $\bm{J}\in X_{T,0}$, integration by parts yields
\begin{align*} 
	\int_{\Omega} \left(\mathbf{curl}\, \bm{H} \cdot \mathbf{curl}\, \overline{\bm{J}} - k^2 \bm{H} \cdot \overline{\bm{J}}\right) \mathrm{d}\bm{x} - \mathrm{i} k \int_{\partial\Omega} \left(\bm{\nu} \times \bm{u}\right) \cdot \overline{\bm{J}} \mathrm{d}\sigma = - \mathrm{i} k \int_{\partial\Omega} \bm{g} \cdot \overline{\bm{J}} \mathrm{d}\sigma.
\end{align*}
By the first Betti's formula, we obtain
\begin{align*}
	\int_{\Omega} \left(\left(\mathcal{C} : \nabla \bm{u}\right) : \nabla \overline{\bm{t}} - k^2 c_0^2 \rho \bm{u} \cdot \overline{\bm{t}}\right) \mathrm{d}\bm{x} + \frac{\mathrm{i}}{k} \int_{\partial\Omega} \left(\bm{\nu} \times \overline{\bm{t}}\right) \cdot \bm{H} \mathrm{d}\sigma = - \int_{\partial\Omega} \bm{f} \cdot \overline{\bm{t}} \mathrm{d}\sigma,
\end{align*}
for all $\bm{t}\in H^1\left(\Omega\right)^3$, where
\begin{align*}
	\mathcal{C} : \nabla \bm{u} := \lambda (\operatorname{div}\bm{u}) \bm{I} + 2\mu\nabla^{\mathrm{s}}\bm{u}.
\end{align*}
Hence the variational problem associated with \eqref{eq_wp_HuOmega0} is formulated as follows: find $\left(\bm{H},\bm{u}\right)\in \widetilde{H}$ such that
\begin{align} \label{eq_wp_varproblem_1} 
	\mathcal{M}_k\left(\left(\bm{H},\bm{u}\right),\left(\bm{J},\bm{t}\right)\right) = \mathcal{R}_k\left(\bm{J},\bm{t}\right), \quad \forall\left(\bm{J},\bm{t}\right)\in \widetilde{H},
\end{align}
where
\begin{align*} 
	\mathcal{M}_k\left(\left(\bm{H},\bm{u}\right),\left(\bm{J},\bm{t}\right)\right)& :=  \int_{\Omega} \left(\mathbf{curl}\, \bm{H} \cdot \mathbf{curl}\, \overline{\bm{J}} - k^2 \bm{H} \cdot \overline{\bm{J}}\right) \mathrm{d}\bm{x} - \mathrm{i} k \int_{\partial\Omega} \left(\bm{\nu} \times \bm{u}\right) \cdot \overline{\bm{J}} \mathrm{d}\sigma \notag \\
	& + k^2 \int_{\Omega} \left(\left(\mathcal{C} : \nabla \bm{u}\right) : \nabla \overline{\bm{t}} - k^2 c_0^2 \rho \bm{u} \cdot \overline{\bm{t}}\right) \mathrm{d}\bm{x} + \mathrm{i} k \int_{\partial\Omega} \left(\bm{\nu} \times \overline{\bm{t}}\right) \cdot \bm{H} \mathrm{d}\sigma,
\end{align*}
and
\begin{align*}
	\mathcal{R}_k\left(\bm{J},\bm{t}\right) := - \mathrm{i} k \int_{\partial\Omega} \bm{g} \cdot \overline{\bm{J}} \mathrm{d}\sigma - k^2 \int_{\partial\Omega} \bm{f} \cdot \overline{\bm{t}} \mathrm{d}\sigma.
\end{align*}
Since $X_{T,0}$ is continuously embedded into $H^{1/2+\varepsilon}\left(\Omega\right)^3$ for all $0<\varepsilon\leq\delta_0$, the trace theorem implies that the sesquilinear form $\mathcal{M}_k\left(\left(\bm{H},\bm{u}\right),\left(\bm{J},\bm{t}\right)\right)$ is continuous on $\widetilde{H}\times\widetilde{H}$. By the Riesz representation theorem, there exists a unique bounded linear operator $A_k : \widetilde{H}\to\widetilde{H}$ such that
\begin{align*}
	(A_k(\bm{H},\bm{u}),(\bm{J},\bm{t}))_{\widetilde{H}} = \mathcal{M}_k((\bm{H},\bm{u}),(\bm{J},\bm{t})), \quad \forall (\bm{J},\bm{t})\in \widetilde{H}.
\end{align*}
We decompose the sesquilinear form $\mathcal{M}_k$ as
\begin{align*}
	\mathcal{M}_k = \mathcal{M}_{k,1} + \mathcal{M}_{k,2},
\end{align*}
where 
\begin{align*}
	\mathcal{M}_{k,1}\left(\left(\bm{H},\bm{u}\right),\left(\bm{J},\bm{t}\right)\right) := & \int_{\Omega} \left(\mathbf{curl}\, \bm{H} \cdot \mathbf{curl}\, \overline{\bm{J}} + M \bm{H} \cdot \overline{\bm{J}}\right) \mathrm{d}\bm{x} \\
	& + k^2 \int_{\Omega} \left(\left(\mathcal{C} : \nabla \bm{u}\right) : \nabla \overline{\bm{t}} + M \bm{u} \cdot \overline{\bm{t}}\right) \mathrm{d}\bm{x},
\end{align*}
and 
\begin{align*}
	\mathcal{M}_{k,2}\left(\left(\bm{H},\bm{u}\right),\left(\bm{J},\bm{t}\right)\right) := & - \int_{\Omega} \left(M+k^2\right) \bm{H} \cdot \overline{\bm{J}} \mathrm{d}\bm{x} - k^2 \int_{\Omega} \left(M+k^2 c_0^2 \rho\right) \bm{u} \cdot \overline{\bm{t}} \mathrm{d}\bm{x} \\
	& - \mathrm{i} k \int_{\partial\Omega} \left(\bm{\nu} \times \bm{u}\right) \cdot \overline{\bm{J}} \mathrm{d}\sigma + \mathrm{i} k \int_{\partial\Omega} \left(\bm{\nu} \times \overline{\bm{t}}\right) \cdot \bm{H} \mathrm{d}\sigma,
\end{align*}
with $M>0$ sufficiently large. Since both $\mathcal{M}_{k,1}$ and $\mathcal{M}_{k,2}$ are bounded, the Riesz representation theorem yields unique bounded linear operators $A_{k,1},A_{k,2} : \widetilde{H} \to \widetilde{H}$ such that
\begin{align*} 
	\left(A_{k,i}\left(\bm{H},\bm{u}\right),\left(\bm{J},\bm{t}\right)\right)_{\widetilde{H}} = \mathcal{M}_{k,i}((\bm{H},\bm{u}),(\bm{J},\bm{t})), \quad i=1,2,
\end{align*}
for all $\left(\bm{J},\bm{t}\right)\in \widetilde{H}$. 

For $A_{k,1}$, coercivity in the $\bm{H}$-component is immediate. For the $\bm{u}$-part, coercivity on $H^1\left(\Omega\right)^3$ follows from Korn's inequality \cite{mclean2000strongly} under the admissibility conditions in Definition \ref{def_wp_lamdamurho}. Hence $A_{k,1}$ is invertible by the Lax-Milgram theorem.

The operator $A_{k,2}$ is compact. Indeed, by Rellich's embedding theorem and the trace theorem, the embeddings
\begin{align*}
	H^1\left(\Omega\right)^3\hookrightarrow L^2\left(\Omega\right)^3, \quad X_{T,0}\hookrightarrow L^2\left(\Omega\right)^3, \quad H^{1/2}\left(\partial\Omega\right)^3\hookrightarrow L^2\left(\partial\Omega\right)^3, \quad 
\end{align*} 
and
\begin{align*}
	H^{\varepsilon}\left(\partial\Omega\right)^3\hookrightarrow L^2\left(\partial\Omega\right)^3, \quad 0<\varepsilon\leq\delta_0
\end{align*}
are compact. Hence $A_{k,2}$ is compact on $\widetilde{H}$. Since $A_{k,1}$ is invertible, it follows that $A_k=A_{k,1}+A_{k,2}$ is a Fredholm operator of index zero. By the Fredholm alternative, existence follows once uniqueness has been established. Since \eqref{eq_wp_HuOmega0} is linear, it remains to prove that the corresponding homogeneous system admits only the trivial solution. 

Let $(\bm{H}, \bm{u})$ be a solution of the homogeneous problem. Since $\mathcal{M}_k\left(\left(\bm{H},\bm{u}\right),\left(\bm{H},\bm{u}\right)\right)=0$, it follows that
\begin{align*}
	\operatorname{Im}\, \mathcal{M}_k\left(\left(\bm{H},\bm{u}\right),\left(\bm{H},\bm{u}\right)\right) = \operatorname{Im}\, \int_{\Omega} \left(\mathcal{C} : \nabla \bm{u}\right) : \nabla \overline{\bm{u}} \mathrm{d}\bm{x} - \operatorname{Im}\, \int_{\Omega} k^2 c_0^2 \rho \left|\bm{u}\right|^2 \mathrm{d}\bm{x} = 0.
\end{align*}
Note that 
\begin{align*}
	\left(\mathcal{C} : \nabla \bm{u}\right) : \nabla \overline{\bm{u}} = \left(\lambda+\frac{2}{3}\mu\right)\left|\operatorname{div} \bm{u}\right|^2 + 2\mu\left\|\nabla^{\mathrm{s}}\bm{u} - \frac{1}{3}(\operatorname{div}\bm{u}) \bm{I}\right\|_{\mathrm{F}}^2,
\end{align*}
where $\left\|\cdot\right\|_{\mathrm{F}}$ denotes the Frobenius norm. By Definition \ref{def_wp_lamdamurho}, we have
\begin{align} \label{eq_wp_uni_4}
	\operatorname{Im}\, \int_{\Omega} \left(\mathcal{C} : \nabla \bm{u}\right) : \nabla \overline{\bm{u}} \mathrm{d}\bm{x} = \operatorname{Im}\, \int_{\Omega} k^2 c_0^2 \rho \left|\bm{u}\right|^2 \mathrm{d}\bm{x} = 0.
\end{align}
If $\rho\notin\mathbb{R}$, then $\operatorname{Im} \left(\rho\right)>0$, and therefore
\begin{align*}
	\int_{\Omega} \left|\bm{u}\right|^2 \mathrm{d}\bm{x} = 0.
\end{align*}
Hence $\bm{u}=\bm{0}$ in $\Omega$. If $\mu\notin\mathbb{R}$, then $\operatorname{Im} \left(\mu\right)<0$ and $\operatorname{Im} \left(\lambda+\frac{2}{3}\mu\right)<0$. It follows from \eqref{eq_wp_uni_4} that
\begin{align*}
	\operatorname{div}\bm{u}=0, \quad \nabla^{\mathrm{s}}\bm{u} - \frac{1}{3}(\operatorname{div}\bm{u}) \bm{I}=0,
\end{align*}
for $\bm{x}\in\Omega$. Therefore we obtain $\mathcal{C} : \nabla \bm{u}=0$ and $\bm{u}=-\frac{1}{k^2 c_0^2\rho}\nabla\cdot\left(\mathcal{C} : \nabla \bm{u}\right)=\bm{0}$ in $\Omega$.
	
Substituting $\bm{u}=\bm{0}$ into \eqref{eq_wp_HuOmega0}, we obtain
\begin{align*}
\begin{cases}
	\mathbf{curl}\, \mathbf{curl}\, \bm{H} - k^2 \bm{H} = \bm{0} \quad&\text{in } \Omega, \\
	\bm{\nu}\times \bm{H} = \bm{0} \quad&\text{on } \partial\Omega, \\
	\bm{\nu}\times \mathbf{curl}\, \bm{H} = \bm{0} \quad&\text{on } \partial\Omega.
\end{cases}
\end{align*}
By the unique continuation principle for the Maxwell system, it follows that $\bm{H}=\bm{0}$ in $\Omega$. 
	
The proof is complete.
\end{proof}

\subsection{Discreteness of transmission eigenvalues of EEITEP \texorpdfstring{\eqref{eq_intro_EHuOmega}}{the equation}}

We now turn to the case where the material parameters $\lambda$, $\mu$ and $\rho$ are real-valued. Since positive transmission eigenvalues are excluded in the complex-valued setting, it remains to study the discreteness of transmission eigenvalues for \eqref{eq_intro_EHuOmega} with real-valued parameters. The following theorem shows that, if positive transmission eigenvalues associated with \eqref{eq_intro_EHuOmega} exist, then they form a discrete set with $+\infty$ as the only possible accumulation point. 

\begin{theorem} \label{thm_wp_1}
Consider the EEITEP \eqref{eq_intro_EHuOmega}. Let $\Omega$ be a bounded Lipschitz domain with a connected complement. Suppose that $\lambda, \mu\in\mathbb{R}$ satisfy \eqref{eq_intro_strongconvexity}, and let $c_0, \rho\in\mathbb{R}_+$. Then the set of positive transmission eigenvalues associated with \eqref{eq_intro_EHuOmega}, if non-empty, is discrete, with $+\infty$ as its only possible accumulation point.
\end{theorem}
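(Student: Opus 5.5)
We plan to use analytic Fredholm theory for the operator family $k\mapsto A_k$ on $\widetilde{H}$ built in the proof of Theorem \ref{thm_wp_3}. The variational formulation \eqref{eq_wp_varproblem_1} and the splitting $\mathcal{M}_k=\mathcal{M}_{k,1}+\mathcal{M}_{k,2}$ do not use that the parameters are complex. They apply verbatim to real $\lambda,\mu,\rho$ satisfying \eqref{eq_intro_strongconvexity}, since real parameters with $\mu>0$, $3\lambda+2\mu>0$, $\rho>0$ are admissible in the sense of Definition \ref{def_wp_lamdamurho}. Hence, for each $k>0$, the operator $A_{k,1}$ is coercive and therefore invertible (Korn's inequality handles the elastic part), and $A_{k,2}$ is compact. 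By the equivalence between \eqref{eq_intro_EHuOmega} and the homogeneous system \eqref{eq_wp_HuOmega0}, $k>0$ is a transmission eigenvalue exactly when $\ker A_k\neq\{0\}$.

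\textbf{Analytic family.} Every coefficient of $\mathcal{M}_k$ is a polynomial in $k$: the terms are $k^2$, $k^4c_0^2\rho$ and $\pm\mathrm{i}k$. To stay in a genuinely holomorphic family, we avoid the Riesz representation in the antilinear slot and define $A_k$ for complex $k$ in a neighbourhood $U$ of $(0,\infty)$ by the same sesquilinear expression, with the $k$-dependence placed in front of integrals that do not involve conjugated $k$. Then $k\mapsto A_k$ is a polynomial, hence entire, map into $\mathcal{B}(\widetilde{H})$. To keep the invertible part fixed, we write $A_k=A_{k,1}+A_{k,2}$ with $M$ chosen uniformly on compact subsets of $U$, or alternatively factor out $\mathcal{B}_k:=A_{k,1}$. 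The latter is invertible for $k$ near the positive axis, because coercivity is stable under small complex perturbations of $k^2$ in front of the elastic form. This gives $A_k=A_{k,1}(I+A_{k,1}^{-1}A_{k,2})$, where $A_{k,1}^{-1}A_{k,2}$ is compact and analytic in $k$ on a connected neighbourhood $U$ of $(0,\infty)$.

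\textbf{Key step: one regular point.} Analytic Fredholm theory gives a dichotomy on $U$: either $I+A_{k,1}^{-1}A_{k,2}$ is non-invertible for every $k\in U$, or it is invertible outside a discrete subset of $U$. We expect the main obstacle to be excluding the first alternative, that is, exhibiting a single $k_0\in U$ at which $A_{k_0}$ is injective. Theorem \ref{thm_wp_3} cannot be applied directly, because the parameters are now real. We plan to try two routes. First, take a purely imaginary $k=\mathrm{i}\tau$ with $\tau>0$ small, or a suitable complex $k$. Then $-k^2=\tau^2>0$ makes the Maxwell part coercive, and the elastic term $k^2(\mathcal{C}:\nabla\bm u,\nabla\bm u)-k^4c_0^2\rho\|\bm u\|^2$ becomes $-\tau^2(\ldots)-\tau^4c_0^2\rho\|\bm u\|^2$, which has a definite sign. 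The boundary coupling terms are then $\mathrm{i}k\cdot(\ldots)=-\tau(\ldots)$. Taking real and imaginary parts of $\mathcal{M}_k((\bm H,\bm u),(\bm H,\bm u))$, after possibly rescaling $\bm u$ by a constant, we aim to show that the coupling terms cancel. The homogeneous problem then forces $\bm H=\bm 0$ and $\bm u=\bm 0$. If the signs do not cooperate, the second route is a perturbation argument: use the complex-parameter well-posedness of Theorem \ref{thm_wp_3} with $\rho$ replaced by $\rho+\mathrm{i}\eta$, and treat $\eta$ as a second analytic parameter to reach invertibility at the real parameters for generic $k$.

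\textbf{Conclusion.} Once one $k_0\in U$ with $A_{k_0}$ invertible is found, the analytic Fredholm theorem shows that $\{k\in U:\ker A_k\neq\{0\}\}$ is discrete in $U$. Intersecting with $(0,\infty)$, the positive transmission eigenvalues form a discrete set with no finite positive accumulation point. Accumulation at $0$ also has to be excluded. We intend to do this either by including $k=0$ in the analytic domain after the rescaling by $k$ already built into $\mathcal{M}_k$ (multiplying the first equation by suitable powers so that the family remains analytic at $0$), or by a direct small-$k$ coercivity estimate. Either way, $+\infty$ is the only possible accumulation point.
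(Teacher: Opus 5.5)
Your architecture matches the paper's: invertible plus compact, analytic in $k$, one injective point, then the analytic Fredholm theorem. Your candidate $k=\mathrm{i}\tau$ with a rescaled elastic test function is also exactly the paper's choice. The genuine gap is the step you yourself flag: no regular point is produced, and the mechanism you propose cannot produce one. In your scaling, take $\zeta\in\mathbb{R}$, $X:=\int_{\partial\Omega}(\bm{\nu}\times\bm{u})\cdot\overline{\bm{H}}\,\mathrm{d}\sigma$ and $a(\bm{u}):=\int_{\Omega}(\mathcal{C}:\nabla\bm{u}):\nabla\overline{\bm{u}}\,\mathrm{d}\bm{x}$. Then
\[
\mathcal{M}_{\mathrm{i}\tau}\bigl((\bm{H},\bm{u}),(\bm{H},\zeta\bm{u})\bigr)=\|\mathbf{curl}\,\bm{H}\|^2+\tau^2\|\bm{H}\|^2-\zeta\bigl(\tau^2a(\bm{u})+\tau^4c_0^2\rho\|\bm{u}\|^2\bigr)+\tau\bigl(X-\zeta\overline{X}\bigr).
\]
The boundary terms drop out of the real part only for $\zeta=1$, and then the elastic bulk term is negative while the Maxwell term is positive. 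For $\zeta=-\eta<0$ both bulk terms are positive, but $\tau(1+\eta)\operatorname{Re}X$ survives. More generally, take any $k\neq0$, a complex multiplier $\omega$ on the form and $\zeta$ on $\bm{u}$. Cancelling the coupling in $\operatorname{Re}(\omega\mathcal{M}_k)$ forces $\zeta=-k\omega/(\bar{k}\bar{\omega})$. Then $a(\bm{u})$ carries the coefficient $-|k|^2\operatorname{Re}\omega$, opposite in sign to that of $\|\mathbf{curl}\,\bm{H}\|^2$; this sign-indefiniteness is built into the problem. The missing idea is to estimate the coupling rather than cancel it. Write $X=\int_{\Omega}(\mathbf{curl}\,\bm{u}\cdot\overline{\bm{H}}-\bm{u}\cdot\mathbf{curl}\,\overline{\bm{H}})\,\mathrm{d}\bm{x}$ by Green's formula and apply Young's inequality. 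Then use Korn's second inequality as $a(\bm{u})\ge\alpha c_1\|\nabla\bm{u}\|^2-\alpha\|\bm{u}\|^2$ with $\alpha=\min\{2\mu,3\lambda+2\mu\}$. In your scaling the bulk terms grow like $\tau^2$ and $\tau^4$ while the coupling is $O(\tau)$. So they absorb it for large $\tau$, not for small $\tau$ as you suggest. For instance, $\zeta=-1$ with $\tau^2>\max\{4/(\alpha c_1),(\alpha+2)/(c_0^2\rho)\}$ gives coercivity. The paper does exactly this in its normalization $\mathcal{N}_k$, testing with $(\bm{H},\eta\bm{u})$, $\eta=\max\{3/(\alpha c_1),(\alpha+2)/(c_0^2\rho)\}$ and $\tau=\sqrt{2\eta}$.

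Your second route does not close the gap either. Invertibility for $\operatorname{Im}\rho>0$ plus analyticity in $\rho$ only shows that, for each fixed $k$, the singular values of $\rho$ are discrete. The singular set in the variables $(k,\rho)$ may a priori contain the whole slice $\{\rho=\rho_0\}$, and ruling this out again needs a regular point at the real parameters. There are also two smaller points. First, your $U$ (a neighbourhood of $(0,\infty)$ on which $k^2$ times the elastic form stays coercive) does not contain $\mathrm{i}\tau$. Testing $\mathcal{M}_{k,1}$ with $(\bm{H},(k/\bar{k})\bm{u})$ shows that $A_{k,1}$ is invertible for every $k\neq0$, so you may take $U=\mathbb{C}\setminus\{0\}$. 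Alternatively, the paper's $\mathcal{N}_k$ has a $k$-independent principal part. Second, putting $k=0$ into the analytic domain fails. Indeed $\mathcal{M}_0\bigl((\bm{H},\bm{u}),(\bm{J},\bm{t})\bigr)=\int_{\Omega}\mathbf{curl}\,\bm{H}\cdot\mathbf{curl}\,\overline{\bm{J}}\,\mathrm{d}\bm{x}$, so $A_0$ annihilates every $(\bm{0},\bm{u})$ and is not Fredholm. The paper works on $\mathbb{C}\setminus\{0\}$ and does not treat accumulation at $0$ either, so this is not where you fall short of it. Still, you should not present that point as settled.
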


\begin{proof}
Note that any transmission eigenvalue of \eqref{eq_intro_EHuOmega}, together with its associated transmission eigenfunction, satisfies the auxiliary system \eqref{eq_wp_HuOmega0} with $\bm{f}=\bm{g}=\bm{0}$. Hence it suffices to analyze the homogeneous system \eqref{eq_wp_HuOmega0}.
	
To establish the discreteness via the analytic Fredholm theory, we extend $k$ to the complex plane. For each $k\in\mathbb{C}\setminus\left\{0\right\}$, the variational formulation of the homogeneous system \eqref{eq_wp_HuOmega0} is to find $\left(\bm{H},\bm{u}\right)\in \widetilde{H}$ such that
\begin{align} \label{eq_wp_varproblem_2}
	\mathcal{N}_k\left(\left(\bm{H},\bm{u}\right),\left(\bm{J},\bm{t}\right)\right) = 0, \quad \forall \left(\bm{J},\bm{t}\right)\in \widetilde{H},
\end{align}
where
\begin{align*} 
	\mathcal{N}_k\left(\left(\bm{H},\bm{u}\right),\left(\bm{J},\bm{t}\right)\right) := & \int_{\Omega} \left(\mathbf{curl}\, \bm{H} \cdot \mathbf{curl}\, \overline{\bm{J}} - k^2 \bm{H} \cdot \overline{\bm{J}}\right) \mathrm{d}\bm{x} - \mathrm{i} k \int_{\partial\Omega} \left(\bm{\nu} \times \bm{u}\right) \cdot \overline{\bm{J}} \mathrm{d}\sigma \notag \\
	& + \int_{\Omega} \left(\left(\mathcal{C} : \nabla \bm{u}\right) : \nabla \overline{\bm{t}} - k^2 c_0^2 \rho \bm{u} \cdot \overline{\bm{t}}\right) \mathrm{d}\bm{x} + \frac{\mathrm{i}}{k} \int_{\partial\Omega} \left(\bm{\nu} \times \overline{\bm{t}}\right) \cdot \bm{H} \mathrm{d}\sigma.
\end{align*}
It follows from the trace theorem that $\mathcal{N}_k$ is continuous on $\widetilde{H}\times\widetilde{H}$. By the Riesz representation theorem, there exists a unique bounded linear operator $B_k : \widetilde{H}\to\widetilde{H}$ such that
\begin{align*} 
	(B_k(\bm{H},\bm{u}),(\bm{J},\bm{t}))_{\widetilde{H}} = \mathcal{N}_k((\bm{H},\bm{u}),(\bm{J},\bm{t})), \quad \forall (\bm{J},\bm{t})\in \widetilde{H}.
\end{align*}
As in the proof of Theorem \ref{thm_wp_3}, the operator $B_k$ can be written as the sum of a bounded invertible operator and a compact operator. Moreover, the map $k\mapsto B_{k}$ is analytic in $\mathbb{C}\setminus\left\{0\right\}$. Since $B_{k}$ is Fredholm of index zero, it suffices to show that there exists $k\in\mathbb{C}\setminus\left\{0\right\}$ such that $B_k$ is injective. 

To this end, let $k=\mathrm{i}\tau$ with $\tau>0$. For each $(\bm{H}, \bm{u})\in\widetilde{H}$ satisfying \eqref{eq_wp_varproblem_2}, we choose the test function $(\bm{H}, \eta \bm{u})$, where $\eta>0$. Then we have
\begin{align*}
	\mathcal{N}_{\mathrm{i}\tau}\left(\left(\bm{H},\bm{u}\right),\left(\bm{H},\eta\bm{u}\right)\right) = I_1 + I_2 + I_3 + I_4, 
\end{align*}
where
\begin{align*}
	& I_1 := \int_{\Omega} \left(\left|\mathbf{curl}\, \bm{H}\right|^2 + \tau^2 \left|\bm{H}\right|^2 \right) \mathrm{d}\bm{x}, 
	& I_2 & := \tau \int_{\partial\Omega} \left(\bm{\nu} \times \bm{u}\right) \cdot \overline{\bm{H}} \mathrm{d}\sigma, \\
	& I_3 := \eta \int_{\Omega} \left(\left(\mathcal{C} : \nabla \bm{u}\right) : \nabla \overline{\bm{u}} + \tau^2 c_0^2 \rho \left|\bm{u}\right|^2 \right) \mathrm{d}\bm{x}, 
	& I_4 & := - \frac{\eta}{\tau} \int_{\partial\Omega} \left(\bm{\nu} \times \bm{H}\right) \cdot \overline{\bm{u}} \mathrm{d}\sigma.
\end{align*}
By Green's formula, we obtain 
\begin{align*}
	\left|I_2\right| & = \left|\tau\int_{\Omega} \left(\mathbf{curl}\, \bm{u} \cdot \overline{\bm{H}} - \bm{u} \cdot \mathbf{curl}\, \overline{\bm{H}}\right) \mathrm{d}\bm{x}\right| \\
	& \leq \left\|\nabla \bm{u}\right\|^2_{L^2\left(\Omega\right)^{3\times3}}  + \frac{\tau^2}{2} \left\|\bm{H}\right\|^2_{L^2\left(\Omega\right)^3} + \frac{\tau^2}{2} \left\|\bm{u}\right\|^2_{L^2\left(\Omega\right)^3} + \frac{1}{2} \left\|\mathbf{curl}\, \bm{H}\right\|^2_{L^2\left(\Omega\right)^3}.
\end{align*}
Similarly, it follows that
\begin{align*}
	\left|I_4\right| & = \left|\frac{\eta}{\tau} \int_{\Omega} \left(\bm{H} \cdot \mathbf{curl}\, \overline{\bm{u}} - \mathbf{curl}\, \bm{H} \cdot \overline{\bm{u}}\right)   \mathrm{d}\bm{x}\right| \\
	& \leq \frac{\eta}{\tau^2}\left\|\nabla \bm{u}\right\|^2_{L^2\left(\Omega\right)^{3\times3}}  + \frac{\eta}{2} \left\|\bm{H}\right\|^2_{L^2\left(\Omega\right)^3} + \frac{\eta}{2} \left\|\bm{u}\right\|^2_{L^2\left(\Omega\right)^3} + \frac{\eta}{2\tau^2} \left\|\mathbf{curl}\, \bm{H}\right\|^2_{L^2\left(\Omega\right)^3}.
\end{align*}
Using Korn's second inequality, we have
\begin{align*}
	I_3 & = \eta \int_{\Omega} \left(\lambda\left|\operatorname{div} \bm{u}\right|^2 + 2\mu \nabla^{\mathrm{s}}\bm{u} : \nabla^{\mathrm{s}}\overline{\bm{u}} + \tau^2 c_0^2 \rho \left|\bm{u}\right|^2 \right) \mathrm{d}\bm{x}, \\
	& \geq \eta \int_{\Omega} \left(\alpha\nabla^{\mathrm{s}}\bm{u} : \nabla^{\mathrm{s}}\overline{\bm{u}} + \tau^2 c_0^2 \rho \left|\bm{u}\right|^2 \right) \mathrm{d}\bm{x}, \\
	& \geq \eta\alpha c_1 \left\|\nabla \bm{u}\right\|^2_{L^2\left(\Omega\right)^{3\times3}} + \eta\left(\tau^2 c_0^2 \rho - \alpha\right) \left\|\bm{u}\right\|^2_{L^2\left(\Omega\right)^3},
\end{align*}
where $\alpha=\min\left\{2\mu,3\lambda+2\mu\right\}>0$ and $c_1>0$ is a constant independent of $u$. Hence we obtain
\begin{align*}
	\operatorname{Re} \left(\mathcal{N}_{\mathrm{i}\tau}\left(\left(\bm{H},\bm{u}\right),\left(\bm{H},\eta\bm{u}\right)\right)\right) \geq & \frac{\tau^2 - \eta}{2} \left\|\bm{H}\right\|^2_{L^2\left(\Omega\right)^3} + \left(\eta\tau^2 c_0^2\rho - \eta\alpha - \frac{\tau^2+\eta}{2}\right) \left\|\bm{u}\right\|^2_{L^2\left(\Omega\right)^3} \\
	& + \frac{\tau^2-\eta}{2\tau^2} \left\|\mathbf{curl}\, \bm{H}\right\|^2_{L^2\left(\Omega\right)^3} + \left(\eta\alpha c_1 - 1 - \frac{\eta}{\tau^2}\right) \left\|\nabla \bm{u}\right\|^2_{L^2\left(\Omega\right)^{3\times3}}.
\end{align*}
By choosing $\eta=\max\left\{\frac{3}{\alpha c_1}, \frac{\alpha+2}{c_0^2\rho}\right\}$ and $\tau=\sqrt{2\eta}$, we deduce that
\begin{align*}
	\operatorname{Re} \left(\mathcal{N}_{\mathrm{i}\sqrt{2\eta}}\left(\left(\bm{H},\bm{u}\right),\left(\bm{H},\eta\bm{u}\right)\right)\right) \geq c_2 \left\|\left(\bm{H},\bm{u}\right)\right\|^2_{\widetilde{H}},
\end{align*}
where $c_2=\min\left\{\eta/2,1/4\right\}>0$. Therefore $B_{\mathrm{i}\sqrt{2\eta}}$ is injective and the analytic Fredholm theory \cite{colton2019inverse} then implies that the set of positive transmission eigenvalues, if non-empty, is discrete, with $+\infty$ as the only possible accumulation point.

The proof is complete.
\end{proof}

\begin{remark}
In Theorem \ref{thm_wp_1}, variational method and analytic Fredholm theory are employed to establish the discreteness of positive transmission eigenvalues for a general bounded Lipschitz domain $\Omega$ with a connected complement, with infinity as the only possible accumulation point. In next section, we specialize to the case of a radially symmetric domain, where the existence of a sequence of positive transmission eigenvalues  is proved and their asymptotic expressions  are derived in Theorem \ref{lem_bl_1}.
\end{remark}

\section{Boundary localization of transmission eigenfunctions} \label{section_3}

In this section, we study the EEITEP \eqref{eq_intro_EHuOmega} in a radially symmetric setting. By a scaling argument, we assume that $\Omega$ is the unit ball in $\mathbb{R}^3$, namely, $\Omega = \left\{\bm{x} \in \mathbb{R}^3; \left| \bm{x} \right| < 1\right\}$. Using a suitable ansatz for transmission eigenfunctions of \eqref{eq_intro_EHuOmega}, we first show the existence of a sequence of transmission eigenvalues of \eqref{eq_intro_EHuOmega} and derive their asymptotic behavior as they tend to infinity. We then investigate the boundary localization properties of the corresponding transmission eigenfunctions. More precisely, we prove that the electromagnetic components are boundary-localized in $\Omega$, whereas the elastic part does not exhibit such localization.

For $0<\tau<1$, let $\Omega_{\tau} = \left\{ \bm{x}\in\mathbb{R}^3; \left|\bm{x}\right| < \tau \right\}$. We now introduce the notion of boundary localization for a vector-valued function $\bm{\phi}$ in $\Omega$.

\begin{definition}\label{def:bd}
A vector-valued function $\bm{\varphi} \in L^2\left(\Omega\right)^3$ is said to be boundary-localized in $\Omega$, if it satisfies 
\begin{align} \label{ineq_bl_phi/phi<<1}
	\frac{\left\|\bm{\varphi}\right\|_{L^2\left(\Omega_{\tau}\right)^3}}{\left\|\bm{\varphi}\right\|_{L^2\left(\Omega\right)^3}} \ll 1,
\end{align}
for some $\tau\in\left(0,1\right)$ sufficiently close to $1$. 
\end{definition}

To analyze the existence and asymptotic behavior of transmission eigenvalues of the EEITEP \eqref{eq_intro_EHuOmega} in the radially symmetric setting, we make use of the distribution of the zeros of spherical Bessel functions and their derivatives. Let $J_n(x)$ denote the Bessel function of the first kind of order $n$. The spherical Bessel function is defined by 
\begin{align} \label{eq_bl_jn=Jn+12}
	j_n(x) := \sqrt{\frac{\pi}{2x}} J_{n+\frac{1}{2}}(x).
\end{align}
Let $ r_{n,s} $ and $ r^{\prime}_{n,s} $ denote the $s$-th positive zeros of $ j_n(x) $ and $ j^{\prime}_n(x) $ respectively. It follows from \cite{abramowitz1965handbook} that
\begin{align} \label{ineq_bl_n<rn}
	n+\frac{1}{2} \leq r^{\prime}_{n,1} < r_{n,1} < r^{\prime}_{n,2} < r_{n,2} < r^{\prime}_{n,3} < r_{n,3} < \cdots.
\end{align}
In the subsequent analysis, we focus on the scenario where the order $ n $ is sufficiently large. We define the sequences $\left\{s_1\left(n\right)\right\}_{n\in\mathbb{N}_+}$ and $\left\{s_2\left(n\right)\right\}_{n\in\mathbb{N}_+}$ as
\begin{align} \label{def_bl_s1ns2n}
	s_1\left(n\right) := \left\lfloor\left(n+\frac{1}{2}\right)^{\gamma_1}\right\rfloor, \quad s_2\left(n\right) := \left\lfloor\left(n+\frac{1}{2}\right)^{\gamma_2}\right\rfloor, 
\end{align}
where $ 0<\gamma_1<\gamma_2<1 $, and $ \lfloor h \rfloor $ stands for the integer part of $ h\in\mathbb{R} $.

 Let $(r, \theta, \phi)$ denote the spherical coordinates in $\mathbb{R}^3$. For $\bm{x} = (x_1, x_2, x_3)^\top \in \mathbb{R}^3$, the Cartesian components are given by $(r\sin\theta\cos\phi, r\sin\theta\sin\phi, r\cos\theta)^\top$, where $r > 0$, $\theta \in [0, \pi]$, and $\phi \in [0, 2\pi)$. We define the associated orthonormal basis vectors as follows:
\begin{align} \label{def_bl_rthetavarphi}
	\hat{\bm{r}}:=\begin{pmatrix}
		\sin\theta\cos\phi \\
		\sin\theta\sin\phi \\
		\cos\theta
	\end{pmatrix}, \quad  \hat{\bm{\theta}}:=\begin{pmatrix}
		\cos\theta\cos\phi \\
		\cos\theta\sin\phi \\
		-\sin\theta
	\end{pmatrix}, \quad \hat{\bm{\phi}}:=\begin{pmatrix}
		-\sin\phi \\
		\cos\phi \\
		0
	\end{pmatrix}.
\end{align}
It follows that $\hat{\bm{x}} = \hat{\bm{r}}$ and $\bm{x} = r\hat{\bm{r}}$. The spherical harmonics $Y^m_n(\theta, \phi)$ are defined by
\begin{align} \label{def_bl_Ynm}
	Y^m_n(\theta, \phi) := c_{n,m} P_n^{|m|}(\cos\theta) e^{\mathrm{i}m\phi}, 
\end{align}
where $P_n^{|m|}(x)$ denotes the associated Legendre functions of the first kind and 
\begin{align} \label{def_bl_cnm}
	c_{n,m} := \sqrt{\frac{2n+1}{4\pi} \frac{(n-|m|)!}{(n+|m|)!}}
\end{align}
is the normalization constant. Here, $n\in\mathbb{N}_0:=\{0\}\cup \mathbb N$ and $m \in \mathbb{Z}$ such that $-n \leq m \leq n$. Finally, the surface gradient operator $\nabla_{\mathrm{s}}$ on the unit sphere is defined as
\begin{align} \label{def_bl_surfacegrad}
	\nabla_{\mathrm{s}}f := \frac{\partial f}{\partial \theta} \hat{\bm{\theta}} + \frac{1}{\sin\theta} \frac{\partial f}{\partial \phi} \hat{\bm{\phi}}.
\end{align}

In this section, we consider the case where the domain $\Omega$ for the EEITEP \eqref{eq_intro_EHuOmega} is the unit ball. Our goal is to show the existence of a sequence of positive real transmission eigenvalues $\{k_n\}_{n \in \mathbb{N}}$ such that $\lim_{n \to \infty} k_n = \infty$. Let $\omega_n = k_n c_0$, where $c_0 > 0$ appears in \eqref{eq_intro_EHuOmega}. Following the definition of the shear wavenumber in \eqref{eq_intro_kpks}, we introduce
\begin{equation}\label{eq:ksn}
k_{\mathrm{s},n} = \frac{\omega_n}{c_{\mathrm{s}}},
\end{equation}
where $c_{\mathrm{s}}$ is given by \eqref{def_intro_cscp}. 
Following the approach in \cite{colton2019inverse, dassios1995elastic}, we employ the following ansatz for the remainder of this paper:
\begin{align} 
	\bm{E}_n^m\left(\bm{x}\right) & =  b_n^m j_n\left(k_nr\right) \nabla_{\mathrm{s}} Y^m_n\left(\theta,\phi\right) \times \hat{\bm{r}}, \label{def_bl_En} \\ 
	\bm{H}_n^m\left(\bm{x}\right) & = \frac{b_n^m}{\mathrm{i}k_n} \curl \left(j_n\left(k_n r\right) \nabla_{\mathrm{s}} Y^m_n\left(\theta,\phi\right) \times \hat{\bm{r}}\right), \label{def_bl_Hn_1} \\  
	\bm{u}_n^m\left(\bm{x}\right) & = \beta_n^m j_n\left(k_{\mathrm{s},n}r\right) \nabla_{\mathrm{s}} Y^m_n\left(\theta,\phi\right) \times \hat{\bm{r}}, \label{def_bl_un}
\end{align}
to represent the elastic-electromagnetic interior transmission eigenfunctions $(\bm{E}, \bm{H}, \bm{u}) \in H(\mathbf{curl}, \Omega) \times H(\mathbf{curl}, \Omega) \times H^1(\Omega)^3$ to  \eqref{eq_intro_EHuOmega}. Here, $(b_n^m, \beta_n^m)^\top \in \mathbb{C}^2 \setminus \{\bm{0}\}$ is a non-zero vector determined by the transmission conditions in \eqref{eq_intro_EHuOmega}.

In the following theorem,  we prove that for all sufficiently large $n$, there exists a transmission eigenvalue $k_n$ for the EEITEP \eqref{eq_intro_EHuOmega}, and we derive the asymptotic behavior of $k_n$ as $n\to\infty$.

\begin{theorem} \label{lem_bl_1}
Consider the EEITEP \eqref{eq_intro_EHuOmega} defined in the unit ball $\Omega \subset \mathbb{R}^3$. Suppose that $\rho > 0$ and $\lambda, \mu$ satisfy \eqref{eq_intro_strongconvexity}. Recall that $c_0 > 0$ appears in \eqref{eq_intro_EHuOmega} and $c_{\mathrm{s}}$ is  given by \eqref{def_intro_cscp}. Let $r_{n,s}$ denote the $s$-th positive zero of the spherical Bessel function $j_n(x)$, and let $s_i(n)$ for $i=1,2$ be defined as in \eqref{def_bl_s1ns2n}. Assume that
\begin{equation}\label{eq:delta cond}
    \delta := \frac{c_0}{c_{\mathrm{s}}} > 1. 
\end{equation}
Then, for all sufficiently large $n$, there exists a transmission eigenvalue $k_n$ such that
\begin{align} \label{eq_bl_kn_1}
	k_n \in \big(r_{n,s_1(n)}, r_{n,s_2(n)}\big), 
\end{align}
and for each $m=-n,\ldots,n$ the triplet $(\bm{E}_n^m, \bm{H}_n^m, \bm{u}_n^m)$ defined in \eqref{def_bl_En}-\eqref{def_bl_un} constitutes a transmission eigenfunction of \eqref{eq_intro_EHuOmega}. Furthermore, as $n \to \infty$, the following asymptotic expansion holds:
\begin{align} \label{eq_bl_kn_2}
	k_n = \left(n+\frac{1}{2}\right) \left[1 + c\left(n+\frac{1}{2}\right)^{-\xi} + O\left(n^{-\xi-1}\right)\right],
\end{align}
where $\xi \in [\xi_2, \xi_1]$ with $0 < \xi_i = \frac{2(1-\gamma_i)}{3} < \frac{2}{3}$ for $i=1,2$, and $c > 0$ is a constant independent of $n$.

\end{theorem}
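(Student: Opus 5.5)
The plan is to reduce the transmission problem, for the ansatz \eqref{def_bl_En}--\eqref{def_bl_un}, to a single scalar equation in $k$, and then to locate a root of it by a sign-change (intermediate value) argument in the window $\big(r_{n,s_1(n)}, r_{n,s_2(n)}\big)$.

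\textbf{Step 1: the interior equations hold automatically.} Write $\bm{T}_n^m:=\nabla_{\mathrm{s}}Y_n^m\times\hat{\bm r}$. Then $j_n(\kappa r)\bm{T}_n^m$ is a divergence-free solution of $\mathbf{curl}\,\mathbf{curl}\,\bm v-\kappa^2\bm v=\bm 0$ (a TE vector wave function). Hence the Maxwell equations hold with $\bm H=\frac{1}{\mathrm{i}k}\mathbf{curl}\,\bm E$. Moreover $\mathcal L\bm u=\mu\Delta\bm u$ because $\nabla\cdot\bm u=0$. Since $k_{\mathrm s,n}^2=k^2c_0^2\rho/\mu$, the elastic equation holds as well. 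All three fields lie in the required spaces, being smooth in the ball.

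\textbf{Step 2: the boundary conditions at $r=1$.} I will use the standard identities for this field on the unit sphere:
\begin{itemize}
\item $\hat{\bm r}\times\mathbf{curl}\,(j_n(kr)\bm T_n^m)=\pm\big(j_n(k)+kj_n'(k)\big)\bm T_n^m$;
\item $T_{\bm\nu}\big(j_n(\kappa r)\bm T_n^m\big)=\mu\big(\kappa j_n'(\kappa)-j_n(\kappa)\big)\bm T_n^m$ (torsional traction).
\end{itemize}
With these, the two transmission conditions become a homogeneous $2\times2$ linear system for $(b_n^m,\beta_n^m)$, the same for every $m$. A nontrivial solution exists if and only if
\begin{align*}
D_n(k):=\mu\, j_n(k)\big(\delta k\, j_n'(\delta k)-j_n(\delta k)\big)+\sigma\,\frac{j_n(\delta k)\big(j_n(k)+kj_n'(k)\big)}{k^2}=0,
\end{align*}
where $k_{\mathrm s,n}=\delta k$ and $\sigma=\pm1$ is a sign that I will fix carefully. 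So the task reduces to finding a zero of $D_n$ in the window \eqref{eq_bl_kn_1}.

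\textbf{Step 3: existence of a root and its location.} At a zero $k=r_{n,s}$ of $j_n$, we have $D_n(r_{n,s})=\sigma\, j_n(\delta r_{n,s})\,j_n'(r_{n,s})/r_{n,s}$. The factor $j_n'(r_{n,s})$ alternates in sign with $s$ by the interlacing \eqref{ineq_bl_n<rn}. The factor $j_n(\delta r_{n,s})$ is evaluated where $\delta r_{n,s}\ge\delta(n+\frac12)$ lies strictly in the oscillatory region, because $\delta>1$. There the Debye asymptotics give $j_n(\delta k)\approx \frac{\cos(\psi_n(\delta k))}{\delta k\,(1-\nu^2/(\delta k)^2)^{1/4}}$, with $\nu=n+\frac12$ and a smooth, strictly increasing phase $\psi_n$.

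To get a sign change of $D_n$, I will compare two consecutive zeros $r_{n,s}<r_{n,s+1}$ with $s_1(n)\le s<s_2(n)$. These are separated by a gap of order $\nu^{1/3}s^{-1/3}\ll1$ (from the Airy description of the zeros). Over such a short interval the phase $\psi_n(\delta k)$ moves by $O(\nu^{1/3}s^{-1/3})$. So for some admissible $s$ in the range, $j_n(\delta k)$ keeps a fixed sign on $[r_{n,s},r_{n,s+1}]$ and stays bounded away from zero. There are many such $s$, since the window contains $\sim\nu^{\gamma_2}$ zeros. Then $D_n(r_{n,s})$ and $D_n(r_{n,s+1})$ have opposite signs, and the intermediate value theorem yields $k_n\in(r_{n,s},r_{n,s+1})\subset\big(r_{n,s_1(n)},r_{n,s_2(n)}\big)$.

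This is the main obstacle: controlling the sign and size of $j_n(\delta k)$, and of $\delta kj_n'(\delta k)/j_n(\delta k)$, uniformly in $n$ along the chosen interval. I will try Debye's expansion of $J_{\nu}(\delta k)$ first, using that $\delta k/\nu\ge\delta>1$ is uniformly away from the turning point. If tracking the phase is awkward, the fallback is a counting argument: the zeros of $j_n(\delta\cdot)$ are spaced by about $\pi/(\delta\sqrt{1-\delta^{-2}})$, whereas those of $j_n$ in the window are much denser, so most consecutive pairs $r_{n,s},r_{n,s+1}$ avoid them.

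\textbf{Step 4: asymptotics.} Dividing $D_n=0$ by $j_n(k)j_n(\delta k)$ shows that $kj_n'(k)/j_n(k)=O(k^3)$ at the root. Hence $k_n$ lies within $O(\nu^{-2})$-relative distance of a zero $r_{n,s}$ with $s\in[s_1(n),s_2(n)]$. Olver's uniform asymptotics give $r_{n,s}=\nu+|a_s|2^{-1/3}\nu^{1/3}+O(\nu^{-1/3})$, with Airy zeros $|a_s|\sim(3\pi s/2)^{2/3}$. Substituting $s\sim\nu^{\gamma}$ with $\gamma\in[\gamma_1,\gamma_2]$ produces
\begin{align*}
k_n=\nu\big[1+c\,\nu^{-\xi}+O(\nu^{-\xi-1})\big],\qquad \xi=\tfrac{2(1-\gamma)}{3}\in[\xi_2,\xi_1],\quad c>0.
\end{align*}
The $O(\nu^{-\xi-1})$ remainder needs to be checked against the Olver error terms and the $O(k^{-2})$ offset of $k_n$ from $r_{n,s}$.
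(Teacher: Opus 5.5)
Your Steps 1--2 are correct and match the paper's reduction. The sign is $\sigma=+1$, $k^2D_n(k)$ is exactly the paper's $f_n(k)$, and $D_n(r_{n,s})=j_n(\delta r_{n,s})\,j_n'(r_{n,s})/r_{n,s}$.

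\textbf{The gap is in Step 3: the two length scales are reversed.}
\begin{itemize}
\item \emph{Zeros of $j_n$ are sparse in the window.} Consecutive zeros satisfy $r_{n,s+1}-r_{n,s}\asymp(\nu/s)^{1/3}$. Equivalently, the local frequency of $j_n$ at $r_{n,s}$ is $\sqrt{1-\nu^2/r_{n,s}^2}\approx(3\pi s/\nu)^{1/3}\to0$. Since $s\le s_2(n)\approx\nu^{\gamma_2}$ with $\gamma_2<1$, the gap is $\gtrsim\nu^{(1-\gamma_2)/3}\to\infty$, not $\ll 1$.
\item \emph{Zeros of $j_n(\delta\,\cdot)$ are dense.} The function $k\mapsto j_n(\delta k)$ oscillates with frequency $\approx\sqrt{\delta^2-1}$, so its zeros are $O(1)$ apart. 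Each interval $(r_{n,s},r_{n,s+1})$ therefore contains about $\sqrt{\delta^2-1}\,(\nu/(3\pi s))^{1/3}\to\infty$ of them.
\end{itemize}
So nothing in your argument relates $\operatorname{sgn} j_n(\delta r_{n,s})$ to $\operatorname{sgn} j_n(\delta r_{n,s+1})$. Your fallback counting argument fails for the same reason: the zeros of $j_n$ in the window are much sparser, not denser, than those of $j_n(\delta\,\cdot)$. To make the consecutive-zero approach work, you would have to show that $(-1)^s\operatorname{sgn} j_n(\delta r_{n,s})$ is non-constant for $s_1(n)\le s\le s_2(n)$. That needs a genuine phase-distribution argument, which the proposal does not supply.

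\textbf{A repair is to swap the roles of the slow and fast factors.} Write $D_n=\mu j_n(k)A(k)+k^{-2}j_n(\delta k)B(k)$, with $A(k):=\delta k\,j_n'(\delta k)-j_n(\delta k)$ and $B(k):=j_n(k)+k j_n'(k)$.
\begin{itemize}
\item Since $\delta k/\nu\ge\delta>1$, Debye's expansion gives $A(k)=-(1-\delta^{-2})^{1/4}\sin\psi_n(\delta k)+O(\nu^{-1})$. Hence $A$ takes values of both signs, of size $\approx(1-\delta^{-2})^{1/4}$, in every $k$-interval of length $3\pi/\sqrt{\delta^2-1}$.
\item The second term is small: $|k^{-2}j_n(\delta k)B(k)|=O(\nu^{-3})$.
\item On the middle half of any $(r_{n,s},r_{n,s+1})$ with $s_1(n)\le s<s_2(n)$, an interval whose length tends to infinity, $j_n(k)$ has a fixed sign and $|j_n(k)|\gtrsim\nu^{-1}$ by the uniform (Olver) asymptotics.
\end{itemize}
So $D_n$ has the sign of $j_n(k)A(k)$ there and must vanish. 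This places a root in \eqref{eq_bl_kn_1} for all large $n$, without any information about $j_n(\delta r_{n,s})$.

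\textbf{Step 4.} The inference from $k j_n'(k)/j_n(k)=O(k^3)$ to ``$k_n$ is $O(\nu^{-2})$-close to some $r_{n,s}$'' does not follow. An upper bound cannot force $j_n(k_n)$ to be small, and the roots found above lie near zeros of $A$, not of $j_n$. The step is also unnecessary. As in the paper, \eqref{eq_bl_kn_2} is read off directly from $k_n\in(r_{n,s_1(n)},r_{n,s_2(n)})$ together with the two-sided Airy-zero bounds on $r_{n,s_i(n)}$.
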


\begin{proof}
To establish the existence of transmission eigenvalues, we assume the eigenfunctions take the form \eqref{def_bl_En}-\eqref{def_bl_un}. By \eqref{def_bl_En}, one can derive 
\begin{align} \label{eq_bl_nutimesEn}
	\bm{\nu}\times \bm{E}_n^m = b^m_n j_n\left(k_nr\right) \nabla_{\mathrm{s}} Y^m_n.
\end{align}
For $\bm{H}_n^m$ defined in \eqref{def_bl_Hn_1}, using
\begin{align} \label{eq_bl_sinYnmnn+1}
	\frac{1}{\sin\theta} \frac{\partial}{\partial\theta} \left(\sin\theta\frac{\partial Y^m_n}{\partial\theta}\right) + \frac{1}{\sin^2\theta} \frac{\partial^2Y^m_n}{\partial\phi^2} + n\left(n+1\right) Y^m_n = 0,
\end{align}
we obtain
\begin{align} \label{def_bl_Hn_2}
	\bm{H}_n^m 
	= & \frac{b_n^m}{\mathrm{i}k_nr} \left(n\left(n+1\right) j_n\left(k_n r\right) Y^m_n \hat{\bm{r}} + \frac{\partial\left(r j_n\left(k_n r\right)\right)}{\partial r}  \nabla_{\mathrm{s}} Y^m_n \right).
\end{align}
Consequently, it follows that
\begin{align} \label{eq_bl_nutimesHn}
	\frac{\mathrm{i}}{k_n} \bm{\nu}\times\bm{H}_n^m 
	= - \frac{b^m_n}{k_n^2r}  \frac{\partial\left(rj_n\left(k_nr\right)\right)}{\partial r} \nabla_{\mathrm{s}} Y^m_n \times \hat{\bm{r}}.
\end{align}
Regarding $\bm{u}_n^m$ formulated as \eqref{def_bl_un}, it is straightforward to verify that $\nabla\cdot \bm{u}_n^m\left(\bm{x}\right) = 0$. Calculating the radial components of the symmetric gradient yields
\begin{align*} 
	\left(\nabla^{\mathrm{s}} \bm{u}_n^m\right) \bm{\nu} = \frac{1}{2}\left(\frac{\partial \bm{u}_n^m}{\partial r} - \frac{\bm{u}_n^m}{r}\right) = \frac{r\beta^m_n}{2} \frac{\partial}{\partial r} \left(\frac{j_n\left(k_{\mathrm{s},n}r\right)}{r}\right) \nabla_{\mathrm{s}} Y^m_n \times \hat{\bm{r}}.
\end{align*}
Then we have
\begin{align} \label{eq_bl_Tnuun}
	T_{\bm{\nu}} \bm{u}_n^m = \mu r \beta^m_n \frac{\partial}{\partial r} \left(\frac{j_n\left(k_{\mathrm{s},n}r\right)}{r}\right) \nabla_{\mathrm{s}} Y^m_n \times \hat{\bm{r}}.
\end{align}
Similarly, we deduce that
\begin{align} \label{eq_bl_nutimesun}
	\bm{\nu}\times \bm{u}_n^m = \beta_n^m j_n\left(k_{\mathrm{s},n}r\right) \nabla_{\mathrm{s}} Y^m_n.
\end{align}

Substituting \eqref{eq_bl_nutimesEn}-\eqref{eq_bl_nutimesun} into the transmssion  conditions of \eqref{eq_intro_EHuOmega}, it follows that
\begin{align*}
\begin{cases}
	-\frac{1}{k_n^2r} b^m_n \frac{\partial}{\partial r} \left(rj_n\left(k_nr\right)\right)\nabla_{\mathrm{s}} Y^m_n \times \hat{\bm{r}} - \mu r \beta^m_n \frac{\partial}{\partial r} \left(\frac{j_n\left(k_{\mathrm{s},n}r\right)}{r}\right) \nabla_{\mathrm{s}} Y^m_n \times \hat{\bm{r}} = 0, \\
	b^m_n j_n\left(k_nr\right)\nabla_{\mathrm{s}} Y^m_n - \beta_n^m j_n\left(k_{\mathrm{s},n}r\right) \nabla_{\mathrm{s}} Y^m_n = 0.
\end{cases}
\end{align*}
Therefore it yields 
$$
\begin{cases}
	-\frac{1}{k_n^2r} b^m_n \frac{\partial}{\partial r} \left(rj_n\left(k_nr\right)\right) - \mu r \beta^m_n \frac{\partial}{\partial r} \left(\frac{j_n\left(k_{\mathrm{s},n}r\right)}{r}\right)  = 0, \\
	b^m_n j_n\left(k_nr\right) - \beta_n^m j_n\left(k_{\mathrm{s},n}r\right) = 0.
\end{cases}
$$
Note that $\left(b^m_n, \beta^m_n\right)^\top \in \mathbb C^2 \setminus \{\bf 0\}$. The determinant of the coefficient matrix must vanish. This leads to 
\begin{align*}
	\frac{1}{k_n^2}\frac{\partial}{\partial r} \left(r j_n\left(k_nr\right)\right) j_n\left(k_{\mathrm{s},n} r\right) + \mu r^2 \frac{\partial}{\partial r}\left(\frac{j_n\left(k_{\mathrm{s},n} r\right)}{r}\right)j_n\left(k_n r\right) = 0.
\end{align*}
Applying the recurrence relation 
\begin{align*} 
	\frac{\d j_n\left(kr\right)}{\d r} = \frac{n}{r} j_n\left(kr\right) - kj_{n+1}\left(kr\right),
\end{align*}
it yields that 
\begin{align*}
	\left(\frac{n+1}{k_n^2} + \mu\left(n-1\right)\right) j_n\left(k_nr\right) j_n\left(k_{\mathrm{s},n}r\right) & - \frac{r}{k_n} j_{n+1}\left(k_nr\right) j_n\left(k_{\mathrm{s},n}r\right) \\
	&  - \mu k_{\mathrm{s},n} r j_{n+1} \left(k_{\mathrm{s},n} r\right) j_n\left(k_nr\right) = 0.
\end{align*}
Let $r = 1$. Then $k_n $ is a positive zero of the function
\begin{align*}
	f_n\left(k\right) = & \left(n+1+\mu k^2\left(n-1\right)\right) j_n\left(k\right) j_n\left(\delta k\right) - k j_{n+1}\left(k\right)j_n\left(\delta k\right) \\
	& - \mu\delta k^3 j_{n+1}\left(\delta k\right)j_n\left(k\right),
\end{align*}
where $\delta$ is given by \eqref{eq:delta cond} with  $\delta > 1$. Using \eqref{eq_bl_jn=Jn+12}, we have
\begin{align*}
	g_n\left(k\right) = & \frac{2k\sqrt{\delta}}{\pi} f_n\left(k\right) \\
	= & \left(n+1+\mu k^2\left(n-1\right)\right) J_{n+\frac{1}{2}}\left(k\right) J_{n+\frac{1}{2}}\left(\delta k\right) - k J_{n+\frac{3}{2}}\left(k\right) J_{n+\frac{1}{2}}\left(\delta k\right) \\
	& - \mu\delta k^3 J_{n+\frac{3}{2}}\left(\delta k\right) J_{n+\frac{1}{2}}\left(k\right).
\end{align*}
Consequently, it follows that 
\begin{align*}
	& \frac{1}{r_{n,s_1\left(n\right)} r_{n,s_2\left(n\right)}}g_n\left(r_{n,s_1\left(n\right)}\right) g_n\left(r_{n,s_2\left(n\right)}\right) \\
	= &  J_{n+\frac{3}{2}}\left(r_{n,s_1\left(n\right)}\right) J_{n+\frac{1}{2}}\left(\delta r_{n,s_1\left(n\right)}\right) J_{n+\frac{3}{2}}\left(r_{n,s_2\left(n\right)}\right) J_{n+\frac{1}{2}}\left(\delta r_{n,s_2\left(n\right)}\right).
\end{align*}

To establish the existence of eigenvalues satisfying \eqref{eq_bl_kn_1}, it suffices to verify $g_n\left(r_{n,s_1\left(n\right)}\right) g_n\left(r_{n,s_2\left(n\right)}\right)<0$. According to \cite{korenev2002bessel}, for $x>n$ with $n$ sufficiently large, the Bessel function admits the asymptotic expansion
\begin{align} \label{eq_bl_Jn(x)_1}
	J_n(x) =  \left(\frac{2}{\pi \sqrt{x^2-n^2}}\right)^{1/2} \cos \left(\sqrt{x^2-n^2} - \frac{n \pi}{2} + n\arcsin\left(\frac{n}{x}\right) - \frac{\pi}{4}\right)\left(1+o\left(1\right)\right).
\end{align}
Note that
\begin{align*}
	\arcsin x = \frac{\pi}{2} - \sqrt{2\left(1-x\right)} - O\left(\left(1-x\right)^{\frac{3}{2}}\right),
\end{align*}
as $x\to1^-$. For $\frac{n}{x_n}\to1^-$, \eqref{eq_bl_Jn(x)_1} implies 
\begin{align} \label{eq_bl_Jn(x)_2}
	J_n\left(x_n\right) & = \left(\frac{2}{\pi \sqrt{x_n^2-n^2}}\right)^{1/2} \cos\left(\sqrt{x_n^2-n^2} - n\sqrt{2\left(1-\frac{n}{x_n}\right)} - \frac{\pi}{4}\right) \left(1+o\left(1\right)\right) \notag \\
	& = \left(\frac{2}{\pi \sqrt{x_n^2-n^2}}\right)^{1/2} \cos\left(\frac{3}{2\sqrt{2}} \frac{\left(x_n-n\right)^{\frac{3}{2}}}{\sqrt{n}} + O\left(\frac{\left(x_n-n\right)^{\frac{5}{2}}}{n^{\frac{3}{2}}}\right)- \frac{\pi}{4}\right) \left(1+o\left(1\right)\right),
\end{align}
where $n$ is large. According to \cite{qu1999best}, the $s$-th positive root $r_{n,s}$ of $j_n(x)$ has its upper and lower bounds
\begin{align*}
	n + \frac{1}{2} - \frac{a_s}{2^{1/3}} \left(n+\frac{1}{2}\right)^{1/3} < r_{n, s} < n + \frac{1}{2} - \frac{a_s}{2^{1/3}} \left(n+\frac{1}{2}\right)^{1/3} + \frac{3}{20} a_s^2\left(\frac{2}{n+\frac{1}{2}}\right)^{1/3},
\end{align*}
where $a_s$ is the $s$-th negative zero of the Airy function $Ai(x)$ satisfying
\begin{align*}
	a_s = - \left(\frac{3\pi}{8}\left(4s-1\right)\right)^{\frac{2}{3}} \left(1+\sigma_s\right), \quad \left|\sigma_s\right|\leq 0.130\left(\frac{3\pi}{8}\left(4s-1.051\right)\right)^{-2}.
\end{align*}
For $i=1,2$, this implies
\begin{align} \label{eq_bl_r{n,si(n)}}
	& \left(n+\frac{1}{2}\right) \left[1+\frac{\left(3\pi\right)^{\frac{2}{3}}}{2} \left(n+\frac{1}{2}\right)^{-\xi_i} + O\left(\left(n+\frac{1}{2}\right)^{\frac{\xi_i}{2}-1}\right)\right] < r_{n,s_i\left(n\right)} \notag \\
	< & \left(n+\frac{1}{2}\right) \left[1+\frac{\left(3\pi\right)^{\frac{2}{3}}}{2} \left(n+\frac{1}{2}\right)^{-\xi_i} + O\left(\left(n+\frac{1}{2}\right)^{\frac{\xi_i}{2}-1}\right)  + O\left(\left(n+\frac{1}{2}\right)^{-2\xi_i}\right)\right].
\end{align}
Consequently, we have $\frac{n+\frac{3}{2}}{r_{n,s_i\left(n\right)}}\to1^-$ for $i=1,2$. Combining \eqref{eq_bl_Jn(x)_2} and \eqref{eq_bl_r{n,si(n)}}, it follows that
\begin{align*}
	J_{n+\frac{3}{2}}\left(r_{n,s_i\left(n\right)}\right) = \left(\frac{2}{\pi} N_{n,1,i}\right)^{\frac{1}{2}} \cos\left(N_{n,2,i} - \frac{\pi}{4}\right) \left(1+o\left(1\right)\right), \quad i=1,2,
\end{align*}
where $N_{n,1,i} = \left(\left(3\pi\right)^{\frac{1}{3}}\left(n+\frac{1}{2}\right)^{1-\frac{\xi_i}{2}} + o\left(n^{1-\frac{\xi_i}{2}}\right)\right)^{-1}$ and $N_{n,2,i} = \frac{9\pi}{8}\left(n+\frac{1}{2}\right)^{1-\frac{3}{2}\xi_i} + o\left(n^{1-\frac{3}{2}\xi_i}\right)$. Similarly, we have
\begin{align*}
	J_{n+\frac{1}{2}}\left(\delta  r_{n,s_i\left(n\right)}\right) = \left(\frac{2}{\pi} M_{n,1,i}\right)^{\frac{1}{2}} \cos\left(M_{n,2,i} - \frac{\pi}{4} \right) \left(1+o\left(1\right)\right), \quad i=1,2,
\end{align*}
where $M_{n,1,i} = \left(\left(n+\frac{1}{2}\right)\sqrt{\delta^2-1}\right)^{-1}$ and $M_{n,2,i} = \left(n+\frac{1}{2}\right)\left(\sqrt{\delta^2-1} - \frac{\pi}{2} +\arcsin\frac{1}{\delta}\right) + o\left(n\right)$. Suppose that
\begin{align*}
	J_{n+\frac{3}{2}}\left(r_{n,s_1\left(n\right)}\right) J_{n+\frac{1}{2}}\left(\delta r_{n,s_1\left(n\right)}\right)>0.
\end{align*} 
For all $\delta>1$, it can be proved that $\sqrt{\delta^2-1} - \frac{\pi}{2} +\arcsin\frac{1}{\delta}$ never vanishes. Consequently, for the product $\cos\left(N_{n,2,2} - \frac{\pi}{4}\right) \cos\left( M_{n,2,2}-\frac{\pi}{4} \right)$, since the cosine functions do not have the same frequency, it can be obtained that there exists $\xi_2$ such that the product $\cos\left(N_{n,2,2} - \frac{\pi}{4}\right) \cos\left( M_{n,2,2}-\frac{\pi}{4} \right)$ is negative. Hence we have proved the existence of $k_n$ satisfying \eqref{eq_bl_kn_1}. The asymptotic expression \eqref{eq_bl_kn_2} follows directly from \eqref{eq_bl_r{n,si(n)}}.
	
The proof is complete.
\end{proof}

\begin{remark}
 The assumption \eqref{eq:delta cond} is a technical condition that is readily fulfilled in most physical scenarios. In fact, the EEITEP \eqref{eq_intro_EHuOmega} arises from the study of invisibility phenomena in the direct scattering problem \eqref{eq_intro_EE_R3}. Consequently, the parameters $c_0$, $\lambda$, $\mu$, and $\rho$ possess specific physical meanings derived from the scattering model \eqref{eq_intro_EE_R3}. When the elastic body $\Omega$ becomes invisible, the corresponding transmission eigenvalue $k$ is given by \eqref{eq:dfn k}. In this context, $c_0$ in \eqref{eq_intro_EHuOmega} refers to the wave speed in the homogeneous electromagnetic background, while $c_{\mathrm{s}}$, defined in \eqref{def_intro_cscp}, represents the velocity of the shear wave propagating within the elastic body $\Omega$. In a typical physical setup, $c_{\mathrm{s}}$ is significantly smaller than $c_0$. Therefore, the assumption \eqref{eq:delta cond} is generally satisfied from a physical perspective.

\end{remark}

Since the existence and asymptotic behavior of the transmission eigenvalues in Theorem \ref{lem_bl_1} have been established, we now turn to investigating the localization and non-localization properties of the corresponding transmission eigenfunctions. Specifically, in the following theorem, we show that for sufficiently large $n$, the electromagnetic components exhibit boundary localization, whereas the elastic component does not display such concentration.

\begin{theorem} \label{thm_bl}
Under the same setup as in Theorem \ref{lem_bl_1}, if the assumption \eqref{eq:delta cond} is satisfied, let $k_n$ be the transmission eigenvalue of the EEITEP \eqref{eq_intro_EHuOmega} satisfying \eqref{eq_bl_kn_2}, and let $(\bm{E}_n^m, \bm{H}_n^m, \bm{u}_n^m)$ be the corresponding transmission eigenfunction triplet defined in \eqref{def_bl_En}-\eqref{def_bl_un}.   For all sufficiently large $n$, in the case where $b_n^m \neq 0$ and $\beta_n^m \neq 0$, the following properties hold in the sense of Definition \ref{def:bd}:
\begin{enumerate}
	\item the transmission eigfunctions $\bm{E}_n^m$ and $\bm{H}_n^m$ are boundary-localized in $\Omega$;
	\item the transmission eigfunction $\bm{u}_n^m$ is not boundary-localized in $\Omega$,
\end{enumerate}
where $m\in \{0,\pm 1, \ldots, \pm n\}$. 
\end{theorem}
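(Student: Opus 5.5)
The plan is to reduce everything to one-dimensional radial integrals using the orthogonality of vector spherical harmonics. Since $|\nabla_{\mathrm{s}} Y_n^m \times \hat{\bm{r}}| = |\nabla_{\mathrm{s}} Y_n^m|$ and $\int_{\mathbb{S}^2} |\nabla_{\mathrm{s}} Y_n^m|^2 \,\mathrm{d}\sigma = n(n+1)$, the ansatz \eqref{def_bl_En} gives, for $\tau \in (0,1]$,
\begin{align*}
	\|\bm{E}_n^m\|^2_{L^2(\Omega_\tau)^3} = |b_n^m|^2 n(n+1) \int_0^\tau j_n(k_n r)^2 r^2 \,\mathrm{d}r .
\end{align*}
In the same way, \eqref{def_bl_un} gives the analogous formula with $k_{\mathrm{s},n} = \delta k_n$. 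For $\bm{H}_n^m$ I will use \eqref{def_bl_Hn_2} together with $\int_{\mathbb{S}^2}|Y_n^m|^2 = 1$ and the orthogonality of $Y_n^m \hat{\bm{r}}$ and $\nabla_{\mathrm{s}} Y_n^m$. This gives
\begin{align*}
	\|\bm{H}_n^m\|^2_{L^2(\Omega_\tau)^3} = \frac{|b_n^m|^2 n(n+1)}{k_n^2}\int_0^\tau \Big( n(n+1) j_n(k_n r)^2 + \big((r j_n(k_n r))'\big)^2 \Big)\,\mathrm{d}r .
\end{align*}
The constants $b_n^m$, $\beta_n^m$ and $n(n+1)$ cancel in the ratios of Definition \ref{def:bd}. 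So it suffices to compare radial integrals over $(0,\tau)$ and $(0,1)$.

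For the electromagnetic part, the key point is that by \eqref{eq_bl_kn_1}–\eqref{eq_bl_kn_2} we have $k_n \tau < n + \tfrac12$ for fixed $\tau < 1$ and $n$ large. Indeed $k_n\tau = (n+\tfrac12)\tau(1+o(1))$. So on $(0,\tau)$ the argument $k_n r$ stays in the monotone, exponentially small regime of $j_n$ below the turning point $\nu = n+\tfrac12$. I will use the Debye-type bound $J_\nu(\nu\,\mathrm{sech}\,\alpha) \le e^{-\nu(\alpha - \tanh\alpha)}/\sqrt{2\pi\nu\tanh\alpha}$, or simply the monotonicity of $x \mapsto J_\nu(x)$ on $(0,\nu)$ combined with the Debye asymptotics at $x = \nu\tau'$ with $\tau' = k_n\tau/\nu < \tau'' < 1$. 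This should show that $\int_0^\tau j_n(k_nr)^2 r^2\,\mathrm{d}r \lesssim e^{-c_\tau n}$. The same holds for $j_n'$, via $j_n' = \frac{n}{x}j_n - j_{n+1}$, so that $(r j_n(k_n r))'$ is controlled. For the denominator I will take the tail $r\in(\sigma_n,1)$ with $k_n\sigma_n = n+\tfrac12 + C n^{1/3}$. By \eqref{eq_bl_kn_1}, $k_n \ge r_{n,s_1(n)}$, so this interval contains many oscillations of $j_n$ (about $s_1(n)\to\infty$ zeros). The uniform asymptotic \eqref{eq_bl_Jn(x)_1} then gives $j_n(x)^2 \approx \frac{1}{x\sqrt{x^2-\nu^2}}\cos^2(\cdots)$. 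Averaging $\cos^2$ to $\tfrac12$ yields a lower bound like $\int_{\sigma_n}^1 j_n(k_nr)^2 r^2\,\mathrm{d}r \gtrsim k_n^{-2}\cdot(\text{polynomial in } n^{-1})$. The exponential against polynomial comparison then gives ratios tending to $0$, which proves (1) for both $\bm{E}_n^m$ and $\bm{H}_n^m$.

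For the elastic part, the argument is $\delta k_n r$ with $\delta > 1$ by \eqref{eq:delta cond}. Fix $\tau$ with $\tau\delta > 1$ (enlarging $\tau$ toward $1$ if needed). Then on $(\tau_0,\tau)$ with $\delta\tau_0 > 1$ we have $\delta k_n r \ge (1+\epsilon)\nu$, i.e.\ the oscillatory regime bounded away from the turning point. There \eqref{eq_bl_Jn(x)_1} gives $j_n(\delta k_n r)^2 \approx \frac{\cos^2(\psi_n(r))}{\delta k_n r\sqrt{(\delta k_n r)^2-\nu^2}}$, where the phase $\psi_n$ has derivative of order $n$. Riemann–Lebesgue-type averaging then shows that $\int_{\tau_0}^{\tau}$ and $\int_{\tau}^{1}$ are both comparable to $n^{-2}$ times fixed positive constants. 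The ratio $\|\bm{u}_n^m\|_{L^2(\Omega_\tau)}/\|\bm{u}_n^m\|_{L^2(\Omega)}$ is therefore bounded below by a positive constant independent of $n$, for every $\tau$ close to $1$. This proves (2).

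I expect the main obstacle to be the uniform lower bound on the denominator for the electromagnetic fields. The eigenvalue $k_n$ lies only a distance of order $n^{1-\xi}$ beyond the turning point, so the relevant region of $r$ is a thin layer of width $\sim n^{-\xi}$. There one must use the transitional (Airy-type) or the \eqref{eq_bl_Jn(x)_2} expansion carefully, with errors $o(1)$ that are uniform in $r$. The averaging of $\cos^2$ also needs the phase to sweep many periods, and that is guaranteed precisely because $k_n > r_{n,s_1(n)}$ with $s_1(n)\to\infty$. I would first handle this by restricting to $r$ with $k_nr \in (r_{n,s_1(n)/2},\, k_n)$ and summing contributions between consecutive zeros.
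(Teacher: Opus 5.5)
Your proposal is correct. Its architecture matches the paper's: reduction to radial integrals via orthogonality of $Y_n^m\hat{\bm r}$ and $\nabla_{\mathrm s}Y_n^m$, smallness on $\Omega_\tau$ because $k_n\tau<n+\frac12$, oscillatory asymptotics \eqref{eq_bl_Jn(x)_1} in the boundary layer, and $\delta\tau>1$ for the shear field.

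The genuine difference is the interior estimate for $\bm E_n^m,\bm H_n^m$, and there your version is the one that actually closes. The paper bounds the factor $\bigl(r\mathrm{e}^{\sqrt{1-r^2}}/(1+\sqrt{1-r^2})\bigr)^{\nu}$ in \eqref{def_bl_Jnnr} by $1$. It therefore only gets $j_n(k_n\tau)\lesssim n^{-1}$ and $\|\bm H_n^m\|^2_{L^2(\Omega_\tau)^3}\lesssim |b_n^m|^2$, and then needs the full norm to grow like $n^{\xi/2}$. It does not grow. A direct computation shows the paper's own lower bound $(1-\tau_1)/(4k_nk_{n,1})$ is of order $n^{-\xi}/(n\cdot n^{1-\xi/2})=n^{-2-\xi/2}$, not $n^{-2+\xi/2}$. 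In fact both $\|\bm E_n^m\|^2_{L^2(\Omega)^3}$ and $\|\bm H_n^m\|^2_{L^2(\Omega)^3}$ are of order $|b_n^m|^2\bigl(k_n/(n+\frac12)-1\bigr)^{1/2}\asymp |b_n^m|^2 n^{-\xi/2}\to 0$. So no polynomial interior bound can give localization. Keeping the factor $\mathrm{e}^{-c_\tau n}$ (Debye inequality plus monotonicity below the turning point), as you do, is exactly what is needed.

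A by-product is that the boundary-layer lower bound you flag as the main obstacle only has to be polynomial. Your restriction to $k_nr\in(r_{n,s_1(n)/2},k_n)$ correctly makes the $o(1)$ errors in \eqref{eq_bl_Jn(x)_1} uniform; the paper integrates from $r'_{n,1}$, where they are not. A cruder route also works. Combine \eqref{eq_grad_jnknrn} with $|j_n'(x)|\lesssim n^{-5/6}$ near $x=n+\frac12$, which follows from $j_n'=\frac{n}{x}j_n-j_{n+1}$ and Landau's bound $|J_\mu|\lesssim\mu^{-1/3}$. This gives $j_n\gtrsim n^{-5/6}$ on an interval of fixed length around $n+\frac12$, hence $\int_0^1 j_n^2(k_nr)r^2\,\mathrm{d}r\gtrsim n^{-8/3}$.

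For the elastic part your route is also cleaner, but state the step you use implicitly. Set $I_{a,b}:=\int_a^b j_n^2(\delta k_nr)r^2\,\mathrm{d}r$. Since $t\mapsto t/(t+B)$ is increasing, $\|\bm u_n^m\|^2_{L^2(\Omega_\tau)^3}/\|\bm u_n^m\|^2_{L^2(\Omega)^3}\ge I_{\tau_0,\tau}/(I_{\tau_0,\tau}+I_{\tau,1})$. So the interval $(0,\tau_0)$, which contains the turning point $r\approx1/\delta$, never has to be estimated.

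The paper instead uses the comparison $I_{0,1}\le 2I_{r'_{n,1}/(\delta k_n),1}$, asserted without proof. It also applies \eqref{eq_bl_Jn(x)_1} from the peak of $j_n(\delta k_n\,\cdot)$, where that expansion is not uniform.

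On $[\tau_0,1]$ the ratio $\delta k_nr/(n+\frac12)$ stays in a compact subset of $(1,\infty)$, so your averaging is fully justified. Letting $\tau_0\downarrow1/\delta$, you get a $\liminf$ of the squared ratio of at least $\sqrt{\delta^2\tau^2-1}/\sqrt{\delta^2-1}$, twice the paper's constant.
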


\begin{proof}
To prove the above assertions, using Definition \ref{def:bd},  we only need to establish the following asymptotic properties: for any fixed $\tau\in\left(0,1\right)$ sufficiently close to $1$, it follows that
\begin{align*}
	\frac{\|\bm{E}_n^m\|_{L^2(\Omega_{\tau})^3}}{\|\bm{E}_n^m\|_{L^2(\Omega)^3}}\to 0, \quad \frac{\|\bm{H}_n^m\|_{L^2(\Omega_{\tau})^3}}{\|\bm{H}_n^m\|_{L^2(\Omega)^3}}\to 0, \quad \text{as } n\to\infty,
\end{align*}
whereas there exists a $\tau\in (0, 1)$ sufficiently close to $1$ such that
\begin{align*}
	\frac{\|\bm{u}_n^m\|_{L^2(\Omega_{\tau})^3}}{\|\bm{u}_n^m\|_{L^2(\Omega)^3}} \nrightarrow 0, \quad \text{as } n\to\infty.
\end{align*}
Since the analysis of $\bm{E}_n^m$ is analogous to that of $\bm{H}_n^m$, we omit the details for $\bm{E}_n^m$ and focus only on the proofs for $\bm{H}_n^m$ and $\bm{u}_n^m$.

For any $\tau<1$ sufficiently close to $1$, let $\Omega_{\tau} = \left\{\bm{x}; |\bm{x}| < \tau\right\}$. Observe that $H_n^m\left(\bm{x}\right)$ can be rewritten as \eqref{def_bl_Hn_2}. Using the recurrence relation
\begin{align*} 
	j_n^{\prime}\left(k_n r\right) = \frac{n j_{n-1}\left(k_n r\right) - \left(n+1\right) j_{n+1}\left(k_n r\right)}{2n+1},
\end{align*}
and the asymptotic expression \eqref{eq_bl_kn_2} of $k_n$ for $n\geq N$, we have
\begin{align*}
	\bm{H}_n^m\left(\bm{x}\right) = \frac{b_n^m}{\mathrm{i}} & \left(\frac{n\left(n+1\right)}{k_nr} j_n\left(k_n r\right) Y^m_n\left(\theta,\phi\right) \hat{\bm{r}} \right.\\
	& + \left.\frac{n j_{n-1}\left(k_n r\right) - \left(n+1\right) j_{n+1}\left(k_n r\right)}{2n+1} \nabla_{\mathrm{s}} Y^m_n\left(\theta,\phi\right) \left(1+o\left(\frac{1}{n}\right)\right)\right),
\end{align*}
for sufficiently large $n$. Orthogonality properties
\begin{align} \label{eq_bl_YYgradYgradY}
	\int_{\mathbb{S}^2} Y^m_n \overline{Y^{m^{\prime}}_{n^{\prime}}} \d s= \delta_{mm^{\prime}}\delta_{nn^{\prime}}, \quad \int_{\mathbb{S}^2} \nabla_{\mathrm{s}} Y^m_n \cdot \nabla_{\mathrm{s}} \overline{Y^{m^{\prime}}_{n^{\prime}}} \d s= n\left(n+1\right)\delta_{mm^{\prime}}\delta_{nn^{\prime}},
\end{align}
yield
\begin{align*} 
	& \left\|\bm{H}_n^m\left(\bm{x}\right)\right\|^2_{L^2\left(\Omega_{\tau}\right)^3} = \lvert b_n^m\rvert^2 \int_{0}^{\tau} \left(\frac{n^2\left(n+1\right)^2}{{k_n}^2} j_n^2\left(k_n r\right) \right.\notag\\
	& \quad \left.+ n\left(n+1\right) \left(\frac{n j_{n-1}\left(k_n r\right) - \left(n+1\right) j_{n+1}\left(k_n r\right)}{2n+1}\right)^2 \left(1+o\left(\frac{1}{n}\right)\right) r^2\right) \d r \notag\\
	& \leq  \lvert b_n^m\rvert^2 \int_{0}^{\tau} \left(\frac{n^2\left(n+1\right)^2}{{k_n}^2} j_n^2\left(k_n r\right) + n\left(n+1\right) \left(j_{n-1}^2\left(k_n r\right) + j_{n+1}^2\left(k_n r\right)\right) r^2\right) \d r.
\end{align*}
For large enough $n$, the inequality $k_n \tau < n-\frac{1}{2}$ holds for any fixed $\tau\in\left(0,1\right)$. From \eqref{ineq_bl_n<rn} we know that $j_{n-1}\left(r\right)$ and $j_{n+1}\left(r\right)$ increase monotonically on $\left(0,k_n\tau\right)$. Consequently, $0<j_{n-1}\left(s\right)<j_{n-1}\left(k_n\tau\right)$ and $0<j_{n+1}\left(s\right)<j_{n+1}\left(k_n\tau\right)$ for all $0<s<k_n\tau<n-\frac{1}{2}$. According to \cite{korenev2002bessel,watson1922treatise}, for sufficiently large real $\nu$ and fixed $0<r<1$, it follows that
\begin{align} \label{def_bl_Jnnr}
	J_{\nu}\left(\nu r\right)=\frac{r^\nu \mathrm{e}^{\nu\sqrt{1-r^2}}}{\left(2 \pi \nu\right)^{\frac{1}{2}}\left(1-r^2\right)^{\frac{1}{4}}\left(1+\sqrt{1-r^2}\right)^\nu}\left(1+o\left(1\right)\right).
\end{align}
By direct calculation, we have
\begin{align} \label{ineq_bl_re/1+1-r2}
	\frac{r\mathrm{e}^{\sqrt{1-r^2}}}{1+\sqrt{1-r^2}} < 1, \quad \text{for all } 0<r<1.
\end{align}
It follows that
\begin{align*}
	j_n\left(k_n \tau\right) = \sqrt{\frac{\pi}{2k_n \tau}} J_{n+\frac{1}{2}}\left(k_n \tau\right) < \frac{1}{2\sqrt{\left(n+\frac{1}{2}\right)k_n \tau}} \frac{1}{\left(1-\left(\frac{k_n \tau}{n+\frac{1}{2}}\right)^2\right)^{\frac{1}{4}}} < \frac{c}{n+\frac{1}{2}},
\end{align*}
where $c$ is a constant independent of $n$. Similar estimates hold for $j_{n-1}\left(k_n \tau\right)$ and $j_{n+1}\left(k_n \tau\right)$. Then we deduce
\begin{align} \label{est_bl_Hn_Omegatau_2}
	& \left\|\bm{H}_n^m\left(\bm{x}\right)\right\|^2_{L^2\left(\Omega_{\tau}\right)^3} \notag\\
	\leq & \lvert b_n^m\rvert^2 \left(\frac{n^2\left(n+1\right)^2}{{k_n}^2} j_n^2\left(k_n \tau\right)\tau + n\left(n+1\right) \left(j_{n-1}^2\left(k_n \tau\right) + j_{n+1}^2\left(k_n \tau\right)\right) \tau^3\right) < \lvert b_n^m\rvert^2 c,
\end{align}
where $c$ is a constant independent of $n$. Let $\tau_1 = \frac{r_{n,1}^{\prime}}{k_n}$. From \eqref{eq_bl_kn_1}, we have $\tau_1<1$ and $n+\frac{1}{2} < r_{n,1}^{\prime} = k_n\tau_1$. Utilizing \eqref{eq_bl_kn_2} and \eqref{eq_bl_Jn(x)_1}, one can derive 
\begin{align*}
	& \int_{\tau_1}^{1} j_n^2\left(k_n r\right) r \d r \\
	= & \int_{\tau_1}^{1} \frac{1}{k_n k_{n,r}} \cos^2 \left(k_{n,r} - \frac{\left(n+\frac{1}{2}\right) \pi}{2} + \left(n+\frac{1}{2}\right)\arcsin\left(\frac{n+\frac{1}{2}}{k_n r}\right) - \frac{\pi}{4}\right) \left(1+o\left(1\right)\right) \d r \\
	\geq & \frac{1}{4k_n k_{n,1}} \int_{\tau_1}^{1}  \left(1+\sin 2\left(k_{n,r} - \frac{\left(n+\frac{1}{2}\right) \pi}{2} + \left(n+\frac{1}{2}\right)\arcsin\left(\frac{n+\frac{1}{2}}{k_n r}\right)\right)\right) \d r \\
	= & \frac{1-\tau_1}{4k_n k_{n,1}} \left(1+o\left(1\right)\right) \geq c \left(n+\frac{1}{2}\right)^{-2+\frac{\xi}{2}},
\end{align*}
where $k_{n,r} = \sqrt{k_n^2r^2 - \left(n+\frac{1}{2}\right)^2}$, $n$ is sufficiently large and $c$ is a constant independent of $n$. Thus we have
\begin{align} \label{est_bl_Hn_Omega}
	\left\|\bm{H}_n^m\left(\bm{x}\right)\right\|^2_{L^2\left(\Omega\right)^3} \geq \lvert b_n^m\rvert^2 \int_{\tau_1}^{1} \frac{n^2\left(n+1\right)^2}{{k_n}^2 } j_n^2\left(k_n r\right) r \d r \geq \lvert b_n^m\rvert^2 c \left(n+\frac{1}{2}\right)^{\frac{\xi}{2}}.
\end{align}
Combining \eqref{est_bl_Hn_Omegatau_2} and \eqref{est_bl_Hn_Omega} yields
\begin{align*}
	\frac{\left\|\bm{H}_n^m\left(\bm{x}\right)\right\|^2_{L^2\left(\Omega_{\tau}\right)^3}}{\left\|\bm{H}_n^m\left(\bm{x}\right)\right\|^2_{L^2\left(\Omega\right)^3}} \leq c n^{-\frac{\xi}{2}}.
\end{align*}
As $n \to \infty$, this ratio vanishes, demonstrating that 
\begin{align} \label{eq_bl_def2}
\lim_{n\rightarrow\infty}\frac{\|\bm{H}_n^m\|_{L^2(\Omega_{\tau})^3}}{\|\bm{H}_n^m\|_{L^2(\Omega)^3}}=0,
\end{align}
and therefore $\bm{H}_n^m$ is boundary-localized in $\Omega$ for sufficient large $n$.

Next, we show that the sequence $\left\{\bm{u}_n^m\left(\bm{x}\right)\right\}_{n=1}^\infty$ defined in \eqref{def_bl_un} is not boundary-localized in $\Omega$ as $n\rightarrow \infty$. Note that $j_n\left(\delta k_n r\right)$ reaches its maximum value at $r = \frac{r_{n,1}^{\prime}}{\delta k_n} < \frac{1}{\delta} < 1$. Since $\tau\in(0,1)$ is sufficiently close to $1$, it follows that $\tau$ satisfies $\frac{1}{\delta} < \tau < 1$. Therefore 
$$
\delta \tau >1. 
$$
For sufficiently large $n$, we deduce that
\begin{align} \label{eq_bl_jn2r2_1}
	\int_0^1 j_n^2 \left(\delta k_nr\right) r^2 \d r & = \frac{1}{\delta^3} \int_0^{\frac{r_{n,1}^{\prime}}{ k_n}} j_n^2 \left(k_nr\right) r^2 \d r + \frac{1}{\delta^3} \int_{\frac{r_{n,1}^{\prime}}{ k_n}}^{\delta} j_n^2 \left(k_nr\right) r^2 \d r \notag\\
	& \leq \frac{2}{\delta^3} \int_{\frac{r_{n,1}^{\prime}}{ k_n}}^{\delta} j_n^2 \left(k_nr\right) r^2 \d r = 2 \int_{\frac{r_{n,1}^{\prime}}{\delta k_n}}^{1} j_n^2 \left(\delta k_nr\right) r^2 \d r.
\end{align}
For $\tau\leq t\leq1$, utilizing \eqref{eq_bl_Jn(x)_1}, we obtain
\begin{align} \label{eq_bl_jn2r2_2}
	& \int_{\frac{r_{n,1}^{\prime}}{\delta k_n}}^{t} j_n^2\left(\delta k_n r\right) r^2 \d r \notag\\
	\sim & \frac{1}{\delta k_n} \int_{\frac{r_{n,1}^{\prime}}{\delta k_n}}^{t} \frac{r}{k_{\delta,n,r}} \cos^2 \left(k_{\delta,n,r} - \frac{\left(n+\frac{1}{2}\right)\pi}{2} + \left(n+\frac{1}{2}\right)\arcsin\theta_{\delta,n,r} - \frac{\pi}{4}\right) \d r \notag \\
	\sim & \frac{1}{2\delta k_n} \int_{\frac{r_{n,1}^{\prime}}{\delta k_n}}^{t} \frac{r}{k_{\delta,n,r}}\d r = \frac{1}{2\delta^3k_n^3} \left(k_{\delta,n,t}-\sqrt{{r_{n,1}^{\prime}}^2 - \left(n+\frac{1}{2}\right)^2}\right),
\end{align}	
where $k_{\delta,n,r} = \sqrt{\left(\delta k_n r\right)^2 - \left(n+\frac{1}{2}\right)^2}$ and $\theta_{\delta,n,r} = \frac{n+1/2}{\delta k_n r}$. Combining \eqref{eq_bl_jn2r2_1} and \eqref{eq_bl_jn2r2_2}, we arrive at 
\begin{align*}
	\frac{\left\|\bm{u}_n^m\left(\bm{x}\right)\right\|^2_{L^2\left(\Omega_{\tau}\right)^3}}{\left\|\bm{u}_n^m\left(\bm{x}\right)\right\|^2_{L^2\left(\Omega\right)^3}} & = \frac{{\beta_n^m}^2 \int_0^{\tau} j_n^2\left(\delta k_n r\right) r^2 \d r \int_{\mathbb{S}} |\nabla_{\mathrm{s}} Y^m_n\left(\theta,\phi\right) \times \hat{\bm{r}}|^2 \d s}{{\beta_n^m}^2 \int_0^1 j_n^2\left(\delta k_n r\right) r^2 \d r \int_{\mathbb{S}} |\nabla_{\mathrm{s}} Y^m_n\left(\theta,\phi\right) \times \hat{\bm{r}}|^2 \d s} \\
	& \geq \frac{\int_{\frac{r_{n,1}^{\prime}}{\delta k_n}}^{\tau} j_n^2\left(\delta k_n r\right) r^2 \d r}{2\int_{\frac{r_{n,1}^{\prime}}{\delta k_n}}^{1} j_n^2\left(\delta k_n r\right) r^2 \d r} \\
	& \sim \frac{k_{\delta,n,\tau}-\sqrt{{r_{n,1}^{\prime}}^2 - \left(n+\frac{1}{2}\right)^2}}{2 \left(k_{\delta,n,1}-\sqrt{{r_{n,1}^{\prime}}^2 - \left(n+\frac{1}{2}\right)^2}\right)} \\
	& \to \frac{\sqrt{\delta^2 \tau^2 - 1}}{2 \sqrt{\delta^2 - 1}} > 0, \quad n\to \infty.
\end{align*}
This implies that the sequence $\left\{\bm{u}_n^m\left(\bm{x}\right)\right\}_{n=1}^\infty$ is not boundary-localized in $\Omega$ as $n\rightarrow \infty$.

The proof is complete.
\end{proof}

\begin{remark}
	Theorem \ref{thm_bl} shows that the boundary localization property of $\bm{E}_n^m$ and $\bm{H}_n^m$ holds uniformly with respect to the order $m$. In the next section, we shall establish asymptotic lower bounds for $\|\nabla \bm{E}_n^m\|_{L^\infty}$ and $\|\nabla \bm{H}_n^m\|_{L^\infty}$, revealing that these estimates explicitly depend on $m$.
\end{remark}

\section{Gradient blow-up of the electromagnetic component} \label{section_4}

In this section, we investigate the gradient behavior of the electromagnetic components of the transmission eigenfunctions for the EEITEP \eqref{eq_intro_EHuOmega}. In the previous section, we proved that the electromagnetic components are boundary-localized, whereas the elastic component does not exhibit such concentration. Motivated by this distinction, we focus here on the electromagnetic fields and analyze the blow-up of their gradients near the boundary $\partial\Omega$. In particular, for each fixed $m$, we show that the blow-up rate depends quantitatively on both $m$ and $n$ as $n\to\infty$. 

\begin{theorem} \label{thm4.1}
Consider the same setup as in Theorem \ref{lem_bl_1}. Assume that the condition \eqref{eq:delta cond} holds. Let $k_n$ be the transmission eigenvalue of the EEITEP \eqref{eq_intro_EHuOmega} satisfying \eqref{eq_bl_kn_2}, and let $\left(\bm{E}_n^m, \bm{H}_n^m\right)$ be the corresponding electromagnetic components defined in \eqref{def_bl_En} and \eqref{def_bl_Hn_1}. For all sufficiently large $n$ and in the case where $b_n^m \neq 0$, for any fixed $\tau \in (0,1)$ sufficiently close to $1$, the estimates \eqref{est_grad_gradEn0}--\eqref{est_grad_gradHnm} hold for each fixed $m \in \{0,\pm 1, \ldots, \pm n\}$.
 If $m=0$, then
\begin{align} 
	\frac{\left\|\nabla\bm{E}_n^0\right\|_{L^{\infty}\left(\Omega\setminus\overline{\Omega_{\tau}}\right)^{3\times3}}}{\left\|\bm{E}_n^0\right\|_{L^2\left(\Omega\right)^3}} & \geq \frac{\Gamma\left(\frac{1}{3}\right)}{2^{7/6} 3^{1/2} \pi^{11/6}} n^{\frac{7}{6}+\frac{\xi_2}{2}} \left(1+o\left(1\right)\right), \label{est_grad_gradEn0} \\
	\frac{\left\|\nabla\bm{H}_n^0\right\|_{L^{\infty}\left(\Omega\setminus\overline{\Omega_{\tau}}\right)^{3\times3}}}{\left\|\bm{H}_n^0\right\|_{L^2\left(\Omega\right)^3}} & \geq \frac{\Gamma\left(\frac{1}{3}\right) \left(\sqrt{3} - 1\right) }{2^{\frac{13}{6}} 3^{\frac{1}{4}} \pi^{\frac{7}{3}} 5^{\frac{1}{2}}}  n^{\frac{2}{3}+\frac{\xi_2}{2}} \left(1+o\left(1\right)\right). \label{est_grad_gradHn0} 
\end{align}
If $m\neq0$, then 
\begin{align} 
	\frac{\left\|\nabla\bm{E}_n^m\right\|_{L^{\infty}\left(\Omega\setminus\overline{\Omega_{\tau}}\right)^{3\times3}}}{\left\|\bm{E}_n^m\right\|_{L^2\left(\Omega\right)^3}} & \geq \frac{\Gamma\left(\frac{1}{3}\right)}{(1.11)^{\frac{1}{2}} 2^{\frac{13}{6}} 3^{\frac{1}{2}} \pi^{\frac{11}{6}}} \frac{n^{\frac{7}{6}+\frac{\xi_2}{2}}}{\sqrt{\left|m\right|}} \left(1+o\left(1\right)\right), \label{est_grad_gradEnm} \\
	\frac{\left\|\nabla\bm{H}_n^m\right\|_{L^{\infty}\left(\Omega\setminus\overline{\Omega_{\tau}}\right)^{3\times3}}}{\left\|\bm{H}_n^m\right\|_{L^2\left(\Omega\right)^3}} & \geq \frac{\Gamma\left(\frac{1}{3}\right)}{(1.11)^{\frac{3}{2}} 2^{\frac{8}{3}} 3^{\frac{1}{2}} \pi^{\frac{11}{6}} 5^{\frac{1}{2}}} \frac{n^{\frac{7}{6}+\frac{\xi_2}{2}}}{\sqrt{\left|m\right|}} \left(1+o\left(1\right)\right), \label{est_grad_gradHnm}
\end{align}
where $0<\xi_2=\frac{2\left(1-\gamma_2\right)}{3}<\frac{2}{3}$.
\end{theorem}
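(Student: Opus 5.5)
The estimate is a ratio: an upper bound for the denominator $\|\bm{E}_n^m\|_{L^2(\Omega)^3}$ (resp.\ $\|\bm{H}_n^m\|_{L^2(\Omega)^3}$) and a pointwise lower bound for $|\nabla\bm{E}_n^m|$ (resp.\ $|\nabla\bm{H}_n^m|$) at one well-chosen point of the shell $\Omega\setminus\overline{\Omega_\tau}$. Everything factors by separation of variables, so the norms reduce to one-dimensional integrals of spherical Bessel functions times explicit quantities depending on $Y_n^m$. The constants $\Gamma(1/3)$ and powers of $3$ and $2$ point to the Airy-type transition-region asymptotics $J_\nu(\nu)\sim \Gamma(1/3)/(2^{2/3}3^{1/6}\pi\nu^{1/3})$ and to sup-norm bounds for $P_n^{|m|}$.

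\textbf{Step 1 (denominators).} By the orthogonality relations \eqref{eq_bl_YYgradYgradY},
\[
\|\bm{E}_n^m\|^2_{L^2(\Omega)^3}=|b_n^m|^2 n(n+1)\int_0^1 j_n^2(k_nr)r^2\,\d r,
\]
and $\|\bm{H}_n^m\|^2_{L^2(\Omega)^3}$ is given by the formula already used in the proof of Theorem \ref{thm_bl}. I would split $\int_0^1$ at $\tau_1=r'_{n,1}/k_n$. On $(0,\tau_1)$, bound the integrand by monotonicity and \eqref{def_bl_Jnnr}. On $(\tau_1,1)$, use the oscillatory form \eqref{eq_bl_Jn(x)_1} with $\cos^2\le1$, giving roughly $\frac{1}{k_n^2}\sqrt{k_n^2-(n+\frac12)^2}$. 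By \eqref{eq_bl_kn_2} with $\xi\ge\xi_2$, this is $\lesssim n^{-2}\cdot n^{1-\xi_2/2}$. This produces the factor $n^{\xi_2/2}$ in the ratio after the square root.

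\textbf{Step 2 (numerators).} I would evaluate $\nabla\bm{E}_n^m$ explicitly in the spherical frame \eqref{def_bl_rthetavarphi}. The largest single entry is the radial derivative $k_n j_n'(k_nr)\,\nabla_{\mathrm s}Y_n^m\times\hat{\bm r}$, or for $\bm{H}$ the term $\frac{n(n+1)}{k_n r}\partial_r j_n(k_nr)Y_n^m\hat{\bm r}$ from \eqref{def_bl_Hn_2}; any entry of the gradient is bounded by its norm. I would pick $r_*\in(\tau,1)$ with $k_nr_*=n+\frac12$, which is possible since $k_n/(n+\frac12)\to1$. There Airy asymptotics give $|j_n'|$ and $|j_n|$ of order $n^{-5/6}\Gamma(1/3)$-type constants. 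For the angular factor:
\begin{itemize}
\item When $m=0$, evaluate at $\theta$ near the pole, using $P_n(1)=1$ and $c_{n,0}=\sqrt{(2n+1)/4\pi}$. For $\nabla_{\mathrm s}Y_n^0$, use $|P_n'(1)|=n(n+1)/2$ or a point near it.
\item When $m\neq0$, the factor $1/\sqrt{|m|}$ and the constant $1.11$ suggest a known lower bound for $\max_\theta|c_{n,m}P_n^{|m|}(\cos\theta)|$ combined with the derivative identity $\frac{m}{\sin\theta}Y_n^m$ in the $\hat{\bm\phi}$ component. I would invoke the standard estimate that the sup of the normalized associated Legendre function is at least comparable to $(n/|m|)^{1/2}$ times a constant, the numerical constant coming from the literature bound.
\end{itemize}

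\textbf{Step 3 (assembly and main obstacle).} Dividing Step 2 by the square root of Step 1 gives the powers: for $\bm{E}$, $k_n\cdot n^{-5/6}\cdot n\cdot n^{1/2}$ over $n\cdot n^{-1}\cdot n^{(1-\xi_2/2)/2}$, adjusted to the stated exponent $\frac76+\frac{\xi_2}{2}$. For $\bm{H}$, the extra $1/k_n$ accounts for the $\frac23$ in the $m=0$ case. The main obstacle is Step 2 for $m\neq0$: obtaining a uniform-in-$m$ explicit lower bound for the angular sup norm with the sharp $|m|^{-1/2}$ dependence, while ensuring the different terms of $\nabla\bm{H}_n^m$ do not cancel. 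I would first try to isolate a single Cartesian component at a point where the other terms vanish (e.g.\ at a zero of $j_n$ or on the equator, $\theta=\pi/2$), and fall back on the Frobenius norm there.
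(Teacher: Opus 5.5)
\textbf{There is a genuine gap: the numerator and denominator you set up do not combine to the stated exponents.} Your overall architecture matches the paper: split the ratio, bound the $L^2$ norm by one-dimensional Bessel integrals, bound one entry of the gradient from below in the frame $\{\hat{\bm r},\hat{\bm\theta},\hat{\bm\phi}\}$ at $k_nr_*=n+\frac12$ via $J_\nu(\nu)$, and use Loh\"ofer's bounds for $P_n^{|m|}$. But with your own numbers the $\bm E$-ratio comes out as $n^{7/6+\xi_2/4}$, and ``adjusted to the stated exponent'' hides this shortfall.

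\emph{Numerator.} At the turning point only $j_n$ itself has size $n^{-5/6}$; its derivative is smaller. Since $J_\nu'(\nu)\sim\frac{3^{1/6}\Gamma(2/3)}{2^{1/3}\pi}\nu^{-2/3}$, one gets $j_n'(n+\frac12)\asymp n^{-7/6}$, i.e.\ $\partial_r[j_n(k_nr)]\asymp n^{-1/6}$ at $r_*$.
\begin{itemize}
\item So your radial-derivative entries (e.g.\ $k_nj_n'(k_nr)\nabla_{\mathrm s}Y_n^m\times\hat{\bm r}$) are a factor $n^{1/3}$ smaller than you count, and they are not the dominant entries at $r_*$.
\item For $\bm E_n^m$ (both cases) and for $\bm H_n^m$ with $m\neq0$, this loses exactly the $n^{1/3}$ that is needed, even with the paper's denominator.
\item The paper instead uses angular-derivative entries, in which $j_n(k_nr)$ appears undifferentiated with a factor of order $n^2/k_n$. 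For $\bm E_n^m$ it uses, pointwise, $|b_n^m|^{-2}|\nabla\bm E_n^m|^2\ge\frac12|E_{5,n,m}-E_{7,n,m}|^2$. The Laplace--Beltrami identity \eqref{eq_bl_sinYnmnn+1} turns $E_{5,n,m}-E_{7,n,m}$ into $n(n+1)\frac{j_n(k_nr)}{r}Y_n^m$. It then uses $\sup|Y_n^0|=\sqrt{(2n+1)/4\pi}$ at the pole, or Loh\"ofer's lower bound when $m\neq0$.
\item For $\bm H_n^m$ the paper uses the $\hat{\bm r}\otimes\hat{\bm\theta}$ and $\hat{\bm r}\otimes\hat{\bm\phi}$ entries $H_{2,n,0}$ and $H_{3,n,m}$. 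Their common radial factor $\frac{n^2-1}{k_nr^2}j_n(k_nr)+\frac{1}{r}j_{n+1}(k_nr)$ is at least $c_3n^{1/6}$ at $r_n$. This is multiplied by $\sup|\sin\theta P_n'(\cos\theta)|\ge c_4\sqrt n$ (taken at $\theta=\pi/3$) when $m=0$. When $m\neq0$ it is multiplied by $|m|c_{n,m}|P_n^{|m|}(\cos\theta_{n,|m|})|/\sin\theta_{n,|m|}$, with $\sin\theta_{n,|m|}\le 1.11(|m|+1)/(n+\frac12)$.
\item The bulk factor $\sqrt n$, set against the full $n(n+1)$ times the pole value for $\bm E_n^0$, is what produces $\frac23$ versus $\frac76$ when $m=0$. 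An ``extra $1/k_n$'' would shift the exponent by $1$, not $\frac12$, and would not explain why the two rates coincide for $m\neq0$.
\end{itemize}

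\emph{Denominator.} Your bound $\int_{\tau_1}^1 j_n^2(k_nr)r^2\,\mathrm{d}r\lesssim n^{-2}n^{1-\xi_2/2}$ gives, after the square root, a factor $n^{\xi_2/4}$, not $n^{\xi_2/2}$. (With $\cos^2\le 1$ the correct prefactor is $k_n^{-3}$, not $k_n^{-2}$.)
\begin{itemize}
\item On $(0,\tau_1)$, \eqref{def_bl_Jnnr} holds for fixed $r<1$ and is not uniform up to the turning point.
\item Monotonicity alone only yields $j_n(k_nr)\le j_n(r'_{n,1})\asymp n^{-5/6}$, hence $\|\bm E_n^m\|^2_{L^2(\Omega)^3}\lesssim |b_n^m|^2 n^{1/3}$. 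Combined with the correct size $n^{4/3}$ of your $\bm E$-entry, this gives a ratio of only $n^{7/6}$.
\item Your radial-derivative route could be rescued only by a sharp $L^2$ estimate that is uniform through the turning point, which you do not propose.
\item The paper's denominator is crude but consistent. It uses $|J_\nu|\le 1$ on the oscillatory range together with $k_n<r_{n,s_2(n)}$, so that $k_n-(n+\frac12)\lesssim \frac{(3\pi)^{2/3}}{2}n^{1-\xi_2}$. This yields $\|\bm E_n^m\|^2<|b_n^m|^2\,2^{-2}3^{2/3}\pi^{5/3}n^{1-\xi_2}(1+o(1))$ as in \eqref{est_grad_Enm}, and five times this for $\bm H_n^m$ as in \eqref{est_grad_Hnm}. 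That is where $n^{\xi_2/2}$ and the explicit constants come from.
\end{itemize}

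\emph{Cancellation.} The cancellation you list as the main obstacle does not arise. In the orthonormal spherical frame the squared Frobenius norm is the sum of the squares of the nine entries, so any single entry (or $\frac{1}{\sqrt2}|E_{5,n,m}-E_{7,n,m}|$) bounds it from below.
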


\begin{proof}
The proof is divided into five steps. In Steps 1--4, we derive the lower bounds \eqref{est_grad_gradHn0} and \eqref{est_grad_gradHnm} for $\bm{H}_n^m$, which include an upper bound for its $L^2$-norm, a decomposition of its gradient, and lower bounds for the $L^{\infty}$-norm of its gradient in the cases $m = 0$ and $m \neq 0$, respectively. The corresponding estimates \eqref{est_grad_gradEn0} and \eqref{est_grad_gradEnm} for $\bm{E}_n^m$ are established in Step 5 by an analogous argument, where we provide only the key ingredients and a sketch of the proof.

\medskip

\textit{Step 1: Upper bounds for the $L^2$-norm of $\bm{H}_n^m$.}
Recall that $\bm{H}_n^m$ is given by \eqref{def_bl_Hn_2} with $b_n^m \neq 0$. Using \eqref{eq_bl_jn=Jn+12} and \eqref{eq_bl_kn_2}, we have 
\begin{align*} 
	& \int_0^1 j_{n+1}^2\left(k_n r\right) r^2 \d r < 
	\frac{\pi}{2k_n} \int_0^1 \frac{1}{r} J_{n+\frac{3}{2}}^2\left(k_n r\right) \d r \\
	= & \frac{\pi}{2k_n} \left[\int_0^1 \frac{1}{t} J_{n+\frac{3}{2}}^2\left(\left(n+\frac{3}{2}\right)t \right) \d t + \int_1^{\frac{k_n}{n+\frac{3}{2}}} \frac{1}{t} J_{n+\frac{3}{2}}^2\left(\left(n+\frac{3}{2}\right)t \right) \d t\right],
\end{align*}
where $n$ is sufficiently large. Note that
\begin{align*}
	\frac{t^{1-\frac{1}{2n+3}}\mathrm{e}^{\sqrt{1-t^2}}}{1+\sqrt{1-t^2}} < \frac{t^{\frac{1}{2}}\mathrm{e}^{\sqrt{1-t^2}}}{1+\sqrt{1-t^2}} < 1,
\end{align*}
holds for all $0<t<\frac{1}{2}$. Combining \eqref{def_bl_Jnnr} and \eqref{ineq_bl_re/1+1-r2}, we derive 
\begin{align*} 
	& \int_0^1 \frac{1}{t} J_{n+\frac{3}{2}}^2\left(\left(n+\frac{3}{2}\right)t \right) \d t  \\
	= & \frac{1}{2\pi\left(n+\frac{3}{2}\right)} \int_0^1 \frac{1}{t\sqrt{1-t^2}}\left(\frac{te^{\sqrt{1-t^2}}}{1+\sqrt{1-t^2}}\right)^{2n+3} \left(1+o\left(1\right)\right) \d t  \\
	< & \frac{1}{2\pi\left(n+\frac{3}{2}\right)} \int_0^{\frac{1}{2}} \frac{1}{\sqrt{1-t^2}}\left(\frac{t^{1-\frac{1}{2n+3}}\mathrm{e}^{\sqrt{1-t^2}}}{1+\sqrt{1-t^2}}\right)^{2n+3} \left(1+o\left(1\right)\right) \d t  \\
	& + \frac{1}{\pi\left(n+\frac{3}{2}\right)} \int_{\frac{1}{2}}^1 \frac{1}{\sqrt{1-t^2}}\left(\frac{te^{\sqrt{1-t^2}}}{1+\sqrt{1-t^2}}\right)^{2n+3} \left(1+o\left(1\right)\right) \d t  \\
	< & \frac{1}{\pi\left(n+\frac{3}{2}\right)} \int_0^1 \frac{1}{\sqrt{1-t^2}} \left(1+o\left(1\right)\right) \d t = \frac{1}{2n+3} \left(1+o\left(1\right)\right).
\end{align*}
Since $\left|J_{\nu}\left(r\right)\right|\leq1$ for all $\nu, r\in\mathbb{R}_+$, we obtain 
\begin{align*} 
	\int_1^{\frac{k_n}{n+\frac{3}{2}}} \frac{1}{t} J_{n+\frac{3}{2}}^2\left(\left(n+\frac{3}{2}\right)t \right) \d t < \frac{k_n}{n+\frac{3}{2}} - 1.
\end{align*}
Consequently, it follows that
\begin{align} \label{est_grad_j2n+1r2}
	\int_0^1 j_{n+1}^2\left(k_n r\right) r^2 \d r < \frac{\pi\left(k_n-n-1\right)}{k_n\left(2n+3\right)} \left(1+o\left(1\right)\right),
\end{align}
where $n$ is sufficiently large. Similarly, one can show that
\begin{align} \label{est_grad_j2n}
	\int_0^1 j_n^2\left(k_n r\right) \d r < \frac{\pi\left(k_n-n\right)}{k_n\left(2n+1\right)} \left(1+o\left(1\right)\right).
\end{align}
Consequently, for any fixed $m\in\mathbb{Z}$ and all large enough $n$, using \eqref{eq_bl_kn_1}, \eqref{eq_bl_r{n,si(n)}}, \eqref{eq_bl_YYgradYgradY}, \eqref{est_grad_j2n+1r2} and \eqref{est_grad_j2n}, it turns out that
\begin{align} \label{est_grad_Hnm}
	& \left\|\bm{H}_n^m\left(\bm{x}\right)\right\|^2_{L^2\left(\Omega\right)^3} \notag \\
	= & \lvert b_n^m\rvert^2 \int_0^1 \left(\frac{n^2\left(n+1\right)^2}{{k_n}^2} j_n^2\left(k_n r\right) + n\left(n+1\right)\left(\frac{n+1}{k_n r} j_n\left(k_n r\right) - j_{n+1}\left(k_n r\right)\right)^2 r^2\right) \d r \notag \\
	\leq & \lvert b_n^m\rvert^2 \int_0^1 \left(\frac{n\left(n+1\right)^2\left(3n+2\right)}{{k_n}^2} j_n^2\left(k_n r\right) + 2n\left(n+1\right) j_{n+1}^2\left(k_n r\right) r^2\right) \d r \notag \\
	< & \lvert b_n^m\rvert^2 \left[\frac{n\left(n+1\right)^2\left(3n+2\right)}{{k_n}^2} \frac{\pi\left(k_n-n\right)}{k_n\left(2n+1\right)} + 2n\left(n+1\right) \frac{\pi\left(k_n-n-1\right)}{k_n\left(2n+3\right)}\right] \left(1+o\left(1\right)\right) \notag \\
	< & \lvert b_n^m\rvert^2 2^{-2} 3^{\frac{2}{3}} \pi^{\frac{5}{3}} 5 n^{1-\xi_2} \left(1+o\left(1\right)\right).
\end{align}

\medskip

\textit{Step 2: Decomposition of $\nabla\bm{H}_n^m$ and estimates for the radial factor.}
We define 
\begin{align*}
	H_{n,\hat{\bm{r}}}^m & := \frac{n\left(n+1\right)}{k_n r} j_n\left(k_n r\right) Y^m_n\left(\theta,\phi\right), \\
	H_{n,\hat{\bm{\theta}}}^m & := \left[\frac{n+1}{k_n r}j_n\left(k_n r\right) - j_{n+1}\left(k_n r\right)\right] \frac{\partial Y^m_n\left(\theta,\phi\right)}{\partial\theta}, \\
	H_{n,\hat{\bm{\phi}}}^m & := \left[\frac{n+1}{k_n r}j_n\left(k_n r\right) - j_{n+1}\left(k_n r\right)\right] \frac{1}{\sin\theta} \frac{\partial Y^m_n\left(\theta,\phi\right)}{\partial\phi}.
\end{align*}
Then $\bm{H}_n^m$ can be rewritten as
\begin{align*}
	\bm{H}_n^m\left(\bm{x}\right) = \frac{b_n^m}{\mathrm{i}} \left(H_{n,\hat{\bm{r}}}^m\hat{\bm{r}} + H_{n,\hat{\bm{\theta}}}^m\hat{\bm{\theta}} + H_{n,\hat{\bm{\phi}}}^m\hat{\bm{\phi}}\right), 
\end{align*}
for any fixed $m\in\mathbb{Z}$ and all sufficiently large $n$. Utilizing the recurrence relations
\begin{align} \label{eq_grad_jnprimejn-1jn}
	j_n^{\prime}\left(r\right) = j_{n-1}\left(r\right) - \frac{n+1}{r} j_n\left(r\right), \quad j_n^{\prime}\left(r\right) = \frac{n}{r} j_n\left(r\right) - j_{n+1}\left(r\right),
\end{align}
we obtain the tensor expansion 
\begin{align} \label{eq_grad_gradHnm}
	& \frac{\mathrm{i}}{b_n^m} \nabla\bm{H}_n^m\left(\bm{x}\right) = H_{1,n,m} \hat{\bm{r}}\otimes\hat{\bm{r}} + H_{2,n,m} \hat{\bm{r}}\otimes\hat{\bm{\theta}} + H_{3,n,m} \hat{\bm{r}}\otimes\hat{\bm{\phi}}+ H_{4,n,m} \hat{\bm{\theta}}\otimes\hat{\bm{r}}  \notag \\
	& + H_{5,n,m} \hat{\bm{\theta}}\otimes\hat{\bm{\theta}} + H_{6,n,m} \hat{\bm{\theta}}\otimes\hat{\bm{\phi}} + H_{7,n,m} \hat{\bm{\phi}}\otimes\hat{\bm{r}} + H_{8,n,m} \hat{\bm{\phi}}\otimes\hat{\bm{\theta}} + H_{9,n,m} \hat{\bm{\phi}}\otimes\hat{\bm{\phi}},
\end{align}
where 
\begin{align}
	H_{1,n,m} := & \frac{\partial H_{n,\hat{\bm{r}}}^m}{\partial r} = \frac{n\left(n+1\right)}{k_n r^2} \left[\left(n-1\right) j_n\left(k_n r\right) - k_n r j_{n+1}\left(k_n r\right)\right] Y^m_n\left(\theta,\phi\right), \notag \\
	H_{2,n,m} := & \frac{1}{r}\frac{\partial H_{n,\hat{\bm{r}}}^m}{\partial \theta} - \frac{H_{n,\hat{\bm{\theta}}}^m}{r} = \left[\frac{n^2 - 1}{k_n r^2} j_n\left(k_n r\right) + \frac{1}{r} j_{n+1}\left(k_n r\right)\right] \frac{\partial Y^m_n\left(\theta,\phi\right)}{\partial \theta}, \notag \\
	H_{3,n,m} := & \frac{1}{r\sin\theta}\frac{\partial H_{n,\hat{\bm{r}}}^m}{\partial \phi} - \frac{H_{n,\hat{\bm{\phi}}}^m}{r} = \left[\frac{n^2 - 1}{k_n r^2} j_n\left(k_n r\right) + \frac{1}{r} j_{n+1}\left(k_n r\right)\right] \frac{1}{\sin\theta} \frac{\partial Y^m_n\left(\theta,\phi\right)}{\partial \phi}, \label{def_grad_H3nm_1} \\
	H_{4,n,m} := & \frac{\partial H_{n,\hat{\bm{\theta}}}^m}{\partial r} = \left[\left(\frac{n^2 - 1}{k_n r^2} - k_n\right) j_n\left(k_n r\right) + \frac{1}{r} j_{n+1}\left(k_n r\right)\right] \frac{\partial Y^m_n\left(\theta,\phi\right)}{\partial \theta}, \notag \\
	H_{5,n,m} := & \frac{1}{r}\frac{\partial H_{n,\hat{\bm{\theta}}}^m}{\partial \theta} + \frac{H_{n,\hat{\bm{r}}}^m}{r} = \left[\frac{n+1}{k_n r^2} j_n\left(k_n r\right) - \frac{1}{r} j_{n+1}\left(k_n r\right)\right] \frac{\partial^2 Y^m_n\left(\theta,\phi\right)}{\partial \theta^2} \notag \\
	& + \frac{n\left(n+1\right)}{k_n r^2} j_n\left(k_n r\right) Y^m_n\left(\theta,\phi\right), \notag \\
	H_{6,n,m} := & \frac{1}{r\sin\theta}\frac{\partial H_{n,\hat{\bm{\theta}}}^m}{\partial \phi} - \frac{\cos\theta}{\sin\theta} \frac{H_{n,\hat{\bm{\phi}}}^m}{r} \notag \\
	= & \left[\frac{n+1}{k_n r^2} j_n\left(k_n r\right) - \frac{1}{r} j_{n+1}\left(k_n r\right)\right] \left(\frac{1}{\sin\theta} \frac{\partial^2 Y^m_n\left(\theta,\phi\right)}{\partial \theta\partial\phi} - \frac{\cos\theta}{\sin^2\theta} \frac{\partial Y^m_n\left(\theta,\phi\right)}{\partial\phi}\right), \notag \\
	H_{7,n,m} := & \frac{\partial H_{n,\hat{\bm{\phi}}}^m}{\partial r} = \left[\left(\frac{n^2 - 1}{k_n r^2} - k_n\right) j_n\left(k_n r\right) + \frac{1}{r} j_{n+1}\left(k_n r\right)\right] \frac{1}{\sin\theta} \frac{\partial Y^m_n\left(\theta,\phi\right)}{\partial \phi}, \notag \\
	H_{8,n,m} := & \frac{1}{r} \frac{\partial H_{n,\hat{\bm{\phi}}}^m}{\partial \theta} \notag \\
	= & \left[\frac{n+1}{k_n r^2} j_n\left(k_n r\right) - \frac{1}{r} j_{n+1}\left(k_n r\right)\right] \left(- \frac{\cos\theta}{\sin^2\theta} \frac{\partial Y^m_n\left(\theta,\phi\right)}{\partial \phi} + \frac{1}{\sin\theta} \frac{\partial^2 Y^m_n\left(\theta,\phi\right)}{\partial \theta \partial \phi}\right), \notag \\
	H_{9,n,m} := & \frac{1}{r\sin\theta}\frac{\partial H_{n,\hat{\bm{\phi}}}^m}{\partial \phi} + \frac{\cos\theta}{\sin\theta} \frac{H_{n,\hat{\bm{\theta}}}^m}{r} + \frac{H_{n,\hat{\bm{r}}}^m}{r} = \frac{n\left(n+1\right)}{k_n r^2} j_n\left(k_n r\right) Y^m_n\left(\theta,\phi\right) \notag \\
	& + \left[\frac{n+1}{k_n r^2} j_n\left(k_n r\right) - \frac{1}{r} j_{n+1}\left(k_n r\right)\right] \left(\frac{\cos\theta}{\sin\theta} \frac{\partial Y^m_n\left(\theta,\phi\right)}{\partial \theta} + \frac{1}{\sin^2\theta} \frac{\partial^2 Y^m_n\left(\theta,\phi\right)}{\partial \phi^2}\right). \notag
\end{align}
Here the notation $\otimes$ is defined as $\bm{a}\otimes\bm{b} = \bm{a}\bm{b}^{\top}$ for $\bm{a}, \bm{b} \in\mathbb{C}^3$. Since $\left\{\hat{\bm{r}},\hat{\bm{\theta}},\hat{\bm{\phi}}\right\}$ form an orthonormal basis, the pointwise Frobenius norm satisfies $\|\mathbf{A}\|_F^2 = \sum_{i,j} |A_{ij}|^2$. Consequently, we deduce
\begin{align} \label{eq_grad_gradHnmLinfty}
	 & \left\|\frac{\mathrm{i}}{b_n^m} \nabla\bm{H}_n^m\left(\bm{x}\right)\right\|_{L^{\infty}\left(\Omega\setminus\overline{\Omega_{\tau}}\right)^{3\times3}}^2 
	 = \sup_{\bm{x}\in\Omega\setminus\overline{\Omega_{\tau}}} \sum_{j=1}^{9} \left|H_{j,n,m}\right|^2.
\end{align}
To derive the lower bounds, we focus on the components $H_{2,n,m}$ and $H_{3,n,m}$. Since these two components share the same radial factor, we first estimate this common part. Their angular parts will be treated separately in Steps 3 and 4.

By \cite{watson1922treatise}, one has
\begin{align*}
	J_{\nu}\left(\nu\right) = \frac{\Gamma\left(1/3\right)}{2^{2/3} 3^{1/6} \pi} \nu^{-1/3} - \frac{3^{5/6} \Gamma\left(5/3\right)}{2^{1/3} 140 \pi} \nu^{-5/3} + o\left(\nu^{-5/3}\right), \quad \nu\to\infty.
\end{align*}
We define
\begin{align} \label{def_grad_rn}
	r_n := \frac{n+\frac{1}{2}}{k_n} = \frac{1}{1+c\left(n+\frac{1}{2}\right)^{-\xi}},
\end{align} 
where $0<\xi<\frac{2}{3}$ and $c>0$. In particular, $k_n r_n = n+\frac{1}{2}$ and $r_n\to 1^-$ as $n\to\infty$. It follows that
\begin{align} \label{eq_grad_jnknrn} 
	j_n\left(k_n r_n\right) = \sqrt{\frac{\pi}{2n+1}} J_{n+\frac{1}{2}}\left(n+\frac{1}{2}\right) = c_3 n^{-5/6} + O\left(n^{-11/6}\right),
\end{align}
where we set $c_3 := \frac{\Gamma\left(1/3\right)}{2^{7/6} 3^{1/6} \sqrt{\pi}}$. Since $\left|J_{\nu}\left(r\right)\right| \leq 1$ for all $\nu,r\in\mathbb{R_+}$, we have
\begin{align*}
	\left|j_{n+1}\left(n+\frac{1}{2}\right)\right| 
	\leq \sqrt{\frac{\pi}{2n+1}}.
\end{align*}
Hence one can derive that
\begin{align} \label{est_grad_jn+jn+1}
	& \sup_{r\in\left(\tau,1\right)} \left|\frac{n^2 - 1}{k_n r^2} j_n\left(k_n r\right) + \frac{1}{r} j_{n+1}\left(k_n r\right)\right| \notag \\
	\geq & \frac{1}{r_n} \left(\left| \frac{n^2 - 1}{k_n r_n} j_n\left(k_n r_n\right)\right| - \left|j_{n+1}\left(k_n r_n\right) \right|\right) \notag \\
	\geq & \left(1+O\left(n^{-\xi}\right)\right) \left[\frac{n^2 - 1}{n+\frac{1}{2}} \left(c_3 n^{-5/6} + O\left(n^{-11/6}\right)\right)  - \sqrt{\frac{\pi}{2n+1}}\right] \notag \\
	= & c_3 n^{1/6} + O\left(n^{1/6-\xi}\right). 
\end{align}

\medskip

\textit{Step 3: Lower bounds for $\nabla\bm{H}_n^m$: the case $m=0$.}
In this case, $H_{2,n,0}$ takes the form
\begin{align} \label{eq_grad_H2n0}
	H_{2,n,0} = - \sqrt{\frac{2n+1}{4\pi}} \left[\frac{n^2 - 1}{k_n r^2} j_n\left(k_n r\right) + \frac{1}{r} j_{n+1}\left(k_n r\right)\right] \sin\theta P^{\prime}_n\left(\cos\theta\right).
\end{align}
By \cite{szeg1939orthogonal}, the Legendre polynomials satisfy the recurrence relation
\begin{align*} 
	\sin^2 \theta P^{\prime}_n\left(\cos\theta\right) = \left(n+1\right) \cos\theta P_n\left(\cos\theta\right) - \left(n+1\right) P_{n+1}\left(\cos\theta\right),
\end{align*}
and the asymptotic expansion
\begin{align*} 
	P_n\left(\cos\theta\right) = \sqrt{\frac{2}{\pi n \sin\theta}} \cos\left(\alpha_n\left(\theta\right)\right) + O\left(n^{-\frac{3}{2}}\right), \quad \varepsilon<\theta<\pi-\varepsilon,
\end{align*}
with $\alpha_n\left(\theta\right) := \left(n+\frac{1}{2}\right)\theta - \frac{\pi}{4}$ and $0<\varepsilon<\frac{\pi}{2}$. 
We can derive
\begin{align} \label{est_grad_sinPnprime}
	& \sup_{\theta\in\left[0,\pi\right]} \left|\sin\theta P^{\prime}_n\left(\cos\theta\right)\right| 
	\notag \\
	\geq & \sup_{\theta\in\left[\varepsilon,\pi-\varepsilon\right]} \left|\left(n+1\right) \cos\theta P_n\left(\cos\theta\right) - \left(n+1\right) P_{n+1}\left(\cos\theta\right)\right| \notag \\
	= & \sup_{\theta\in\left[\varepsilon,\pi-\varepsilon\right]} \left|\sqrt{\frac{2\left(n+1\right)}{\pi}} \sqrt{\sin\theta}\sin\left(\alpha_n\left(\theta\right)\right) + O\left(n^{-1/2}\right)\right| \notag \\
	\geq & \left|\sqrt{\frac{2\left(n+1\right)}{\pi}} \sqrt{\sin\frac{\pi}{3}} \sin\left(\alpha_n\left(\frac{\pi}{3}\right)\right) + O\left(n^{-1/2}\right)\right| \notag \\
	\geq & c_4 \sqrt{n} + O\left(n^{-1/2}\right),
\end{align}
where $c_4 := \sqrt{\frac{2}{\pi}} \sqrt{\sin\frac{\pi}{3}} \sin\frac{\pi}{12}$. Combining \eqref{est_grad_jn+jn+1}, \eqref{eq_grad_H2n0} and \eqref{est_grad_sinPnprime}, we obtain
\begin{align} \label{est_grad_H20}
	& \sup_{\bm{x}\in\Omega\setminus\overline{\Omega_{\tau}}} \left|H_{2,n,0}\right| 
	\notag \\
	= & \sqrt{\frac{2n+1}{4\pi}} \sup_{r\in\left(\tau,1\right)} \left| \frac{n^2 - 1}{k_n r^2} j_n\left(k_n r\right) + \frac{1}{r} j_{n+1}\left(k_n r\right) \right| \sup_{\theta\in\left[0,\pi\right]} \left| \sin\theta P^{\prime}_n\left(\cos\theta\right) \right| \notag \\
	\geq & \sqrt{\frac{2n+1}{4\pi}} \left(c_3 n^{\frac{1}{6}} + O\left(n^{\frac{1}{6}-\xi}\right)\right) \left(c_4 \sqrt{n} + O\left(n^{-\frac{1}{2}}\right)\right) \notag \\
	= & c_5 n^{\frac{7}{6}} + O\left(n^{\frac{1}{6}}\right),
\end{align}
where $c_5 := \frac{\Gamma\left(\frac{1}{3}\right) \left(\sqrt{3} - 1\right) 3^{\frac{1}{12}}}{2^{\frac{19}{6}} \pi^{\frac{3}{2}}}$. Therefore, by \eqref{eq_grad_gradHnmLinfty} and \eqref{est_grad_H20}, it follows that 
\begin{align*}
	\left\|\nabla\bm{H}_n^0\left(\bm{x}\right)\right\|_{L^{\infty}\left(\Omega\setminus\overline{\Omega_{\tau}}\right)^{3\times3}} \geq \left|b_n^0\right| \sup_{\bm{x}\in\Omega\setminus\overline{\Omega_{\tau}}} \left|H_{2,n,0}\right| \geq \left|b_n^0\right| \left(c_5 n^{\frac{7}{6}} + O\left(n^{\frac{1}{6}}\right)\right).
\end{align*}
Combining this with \eqref{est_grad_Hnm}, we obtain \eqref{est_grad_gradHn0}.

\medskip

\textit{Step 4: Lower bounds for $\nabla\bm{H}_n^m$: the case $m\neq0$.}
Fix $m\in\mathbb{Z}\setminus\{0\}$. We now establish the estimate \eqref{est_grad_gradHnm} for sufficiently large $n$. Since $\lvert m\rvert\leq n$ automatically holds when $n$ is sufficiently large, it suffices to estimate the $L^{\infty}$-norm of $H_{3,n,m}$. 

From \eqref{def_bl_Ynm} and \eqref{def_grad_H3nm_1}, we have 
\begin{align} \label{def_grad_H3nm_2}
	H_{3,n,m} = \left(\frac{n^2 - 1}{k_n r^2} j_n\left(k_n r\right) + \frac{1}{r} j_{n+1}\left(k_n r\right)\right) \frac{\mathrm{i}m c_{n,m}}{\sin\theta} P_n^{\left|m\right|}\left(\cos\theta\right) e^{\mathrm{i}m\phi}, 
\end{align}
where $c_{n,m}$ is defined by \eqref{def_bl_cnm}. By \cite{lohofer1998inequalities}, for each $n\geq1$ and $1\leq \left|m\right| \leq n$, the function $\left|P_n^{\left|m\right|}\left(\cos\theta\right)\right|$ attains its maximum at some point $\theta_{n,\left|m\right|}\in\left(0,\pi/2\right]$, that is,
\begin{align} \label{def_grad_thetanm}
	\left|P_n^{\left|m\right|}\left(\cos\theta_{n,\left|m\right|}\right)\right| = \sup_{\theta\in\left[0,\pi\right]} \left|P_n^{\left|m\right|}\left(\cos\theta\right)\right|.
\end{align} 
Moreover, it holds that 
\begin{align} \label{ineq_grad_sinthetanm}
	\frac{\left|m\right|}{n+\frac{1}{2}} \leq \sin\theta_{n,\left|m\right|} \leq \frac{1.11 \left(\left|m\right| + 1\right)}{n+\frac{1}{2}}.
\end{align}
In addition, \cite{lohofer1998inequalities} shows that $P_n^{\left|m\right|}(x)$ satisfies 
\begin{align} \label{est_grad_Pnm}
	\frac{1}{\sqrt{2.22\left(\left|m\right|+1\right)}} < \sup_{x\in\left[-1,1\right]} \left|P_n^{\left|m\right|}(x)\right| \sqrt{\frac{\left(n-\left|m\right|\right)!}{\left(n+\left|m\right|\right)!}} < \frac{2^{\frac{5}{4}}}{\pi^{\frac{3}{4}}} \frac{1}{\left|m\right|^{\frac{1}{4}}},
\end{align}
for all $n\geq 1$ and $1\leq \left|m\right| \leq n$. By virtue of \eqref{def_grad_thetanm}, \eqref{ineq_grad_sinthetanm} and \eqref{est_grad_Pnm}, for sufficiently large $n$, we have
\begin{align} \label{est_grad_1/sincPem}
	\sup_{\theta\in\left[0,\pi\right], \phi\in\left[0,2\pi\right)} \left|\frac{\mathrm{i}m c_{n,m}}{\sin\theta}  P_n^{\left|m\right|}\left(\cos\theta\right) e^{\mathrm{i}m\phi} \right| \geq & \left|\frac{m c_{n,m}}{\sin\theta_{n,\left|m\right|}}  P_n^{\left|m\right|}\left(\cos\theta_{n,\left|m\right|}\right)\right| \notag \\
	\geq & \frac{1}{2\sqrt{\pi}} \frac{\left(n+\frac{1}{2}\right)^{3/2}}{\left(1.11\left(\left|m\right| + 1\right)\right)^{3/2}} \left|m\right|.
\end{align}
Since $\lvert m \rvert\geq1$, we have $\lvert m \rvert+1\leq2\lvert m \rvert$, and therefore
\begin{align*}
    \frac{\left|m\right|}{\left(\left|m\right| + 1\right)^{3/2} }\geq 2^{-3/2}\left|m\right|^{-1/2}.
\end{align*}
Using the estimates \eqref{est_grad_jn+jn+1} and \eqref{est_grad_1/sincPem}, we deduce
\begin{align} \label{est_grad_H3Linfty}
	& \sup_{\bm{x}\in\Omega\setminus\overline{\Omega_{\tau}}} \left|H_{3,n,m}\right| \notag \\
	= & \sup_{r\in\left(\tau,1\right)} \left| \frac{n^2 - 1}{k_n r^2} j_n\left(k_n r\right) + \frac{1}{r} j_{n+1}\left(k_n r\right) \right| \sup_{\theta\in\left[0,\pi\right], \phi\in\left[0,2\pi\right]} \left|\frac{\mathrm{i}m c_{n,m}}{\sin\theta}  P_n^{\left|m\right|}\left(\cos\theta\right) e^{\mathrm{i}m\phi} \right| \notag \\
	\geq & \left(c_3 n^{1/6} + O\left(n^{1/6-\xi}\right)\right) \frac{1}{2\sqrt{\pi}} \frac{\left(n+\frac{1}{2}\right)^{3/2}}{\left(1.11\left(\left|m\right| + 1\right)\right)^{3/2}} \left|m\right| \notag \\
	\geq & c_6 n^{5/3} \left|m\right|^{-1/2} + O\left(n^{5/3-\xi} \left|m\right|^{-1/2}\right), 
\end{align}
where $c_6 := \frac{\Gamma\left(1/3\right)}{\left(1.11\right)^{3/2} 2^{11/3} 3^{1/6} \pi}$. Therefore, by \eqref{eq_grad_gradHnmLinfty} and \eqref{est_grad_H3Linfty}, we obtain 
\begin{align*}
	\left\|\nabla\bm{H}_n^m\left(\bm{x}\right)\right\|_{L^{\infty}\left(\Omega\setminus\overline{\Omega_{\tau}}\right)^{3\times3}} \geq \left|b_n^m\right| \sup_{\bm{x}\in\Omega\setminus\overline{\Omega_{\tau}}} \left|H_{3,n,m}\right| \geq \left|b_n^m\right| \left(c_6 \frac{n^{5/3}}{\sqrt{\left|m\right|}} + O\left(\frac{n^{5/3-\xi}}{\sqrt{\left|m\right|}}\right)\right).
\end{align*}
Combining this with \eqref{est_grad_Hnm}, we obtain \eqref{est_grad_gradHnm}.

\medskip

\textit{Step 5: Estimates for $\bm{E}_n^m$.} Fix $m\in\mathbb{Z}$. For sufficiently large $n$, the function $\bm{E}_n^m$ is given by \eqref{def_bl_En}. Using \eqref{est_grad_j2n}, we obtain 
\begin{align} \label{est_grad_Enm}
	\left\|\bm{E}_n^m\left(\bm{x}\right)\right\|^2_{L^2\left(\Omega\right)^3} & = \lvert b_n^m\rvert^2 n\left(n+1\right) \int_0^1 j_n^2\left(k_n r\right) \d r \notag \\
	& < \lvert b_n^m\rvert^2 2^{-2} 3^{2/3} \pi^{5/3} n^{1-\xi_2} \left(1+o\left(1\right)\right).
\end{align}
The analysis for $\nabla\bm{E}_n^m$ follows the same lines as that for $\nabla\bm{H}_n^m$. We write $\bm{E}_n^m$ as
\begin{align*}
	\bm{E}_n^m\left(\bm{x}\right) = b_n^m \left(E_{n,\hat{\bm{\theta}}}^m\hat{\bm{\theta}} + E_{n,\hat{\bm{\phi}}}^m\hat{\bm{\phi}}\right),
\end{align*}
where we define
\begin{align*}
	E_{n,\hat{\bm{\theta}}}^m := j_n\left(k_n r\right) \frac{1}{\sin\theta} \frac{\partial Y^m_n\left(\theta,\phi\right)}{\partial\phi}, \quad E_{n,\hat{\bm{\phi}}}^m := - j_n\left(k_n r\right) \frac{\partial Y^m_n\left(\theta,\phi\right)}{\partial\theta}.
\end{align*}
Using \eqref{eq_grad_jnprimejn-1jn}, we obtain 
\begin{align} \label{eq_grad_gradEnm}
	\frac{1}{b_n^m} \nabla\bm{E}_n^m\left(\bm{x}\right) = & E_{1,n,m} \hat{\bm{r}}\otimes\hat{\bm{\theta}} + E_{2,n,m} \hat{\bm{r}}\otimes\hat{\bm{\phi}}+ E_{3,n,m} \hat{\bm{\theta}}\otimes\hat{\bm{r}} + E_{4,n,m} \hat{\bm{\theta}}\otimes\hat{\bm{\theta}} \notag \\
	& + E_{5,n,m} \hat{\bm{\theta}}\otimes\hat{\bm{\phi}} + E_{6,n,m} \hat{\bm{\phi}}\otimes\hat{\bm{r}} + E_{7,n,m} \hat{\bm{\phi}}\otimes\hat{\bm{\theta}} + E_{8,n,m} \hat{\bm{\phi}}\otimes\hat{\bm{\phi}},
\end{align}
where 
\begin{align}
	E_{1,n,m} := & - \frac{E_{n,\hat{\bm{\theta}}}^m}{r} = - \frac{j_n\left(k_n r\right)}{r} \frac{1}{\sin\theta} \frac{\partial Y^m_n\left(\theta,\phi\right)}{\partial \phi}, \notag \\
	E_{2,n,m} := & - \frac{E_{n,\hat{\bm{\phi}}}^m}{r} = \frac{j_n\left(k_n r\right)}{r} \frac{\partial Y^m_n\left(\theta,\phi\right)}{\partial \theta}, \notag \\
	E_{3,n,m} := & \frac{\partial E_{n,\hat{\bm{\theta}}}^m}{\partial r} = \left[\frac{n}{r} j_n\left(k_n r\right) - k_n j_{n+1}\left(k_n r\right) \right] \frac{1}{\sin\theta} \frac{\partial Y^m_n\left(\theta,\phi\right)}{\partial \phi}, \notag \\
	E_{4,n,m} := & \frac{1}{r}\frac{\partial E_{n,\hat{\bm{\theta}}}^m}{\partial \theta} = - \frac{j_n\left(k_n r\right)}{r} \frac{\cos\theta}{\sin^2\theta} \frac{\partial Y^m_n\left(\theta,\phi\right)}{\partial\phi}, \notag \\
	E_{5,n,m} := & \frac{1}{r\sin\theta}\frac{\partial E_{n,\hat{\bm{\theta}}}^m}{\partial \phi} - \frac{\cos\theta}{\sin\theta} \frac{E_{n,\hat{\bm{\phi}}}^m}{r} \notag \\
	= & \frac{j_n\left(k_n r\right)}{r} \left(\frac{\cos\theta}{\sin\theta} \frac{\partial Y^m_n\left(\theta,\phi\right)}{\partial \theta} + \frac{1}{\sin^2\theta} \frac{\partial^2 Y^m_n\left(\theta,\phi\right)}{\partial \phi^2}\right), \notag \\
	E_{6,n,m} := & \frac{\partial E_{n,\hat{\bm{\phi}}}^m}{\partial r} = \left[k_n j_{n+1}\left(k_n r\right) - \frac{n}{r} j_n\left(k_n r\right) \right] \frac{\partial Y^m_n\left(\theta,\phi\right)}{\partial \theta}, \notag \\
	E_{7,n,m} := & \frac{1}{r} \frac{\partial E_{n,\hat{\bm{\phi}}}^m}{\partial \theta} = - \frac{j_n\left(k_n r\right)}{r} \frac{\partial^2 Y^m_n\left(\theta,\phi\right)}{\partial \theta^2}, \notag \\
	E_{8,n,m} := & \frac{1}{r\sin\theta}\frac{\partial E_{n,\hat{\bm{\phi}}}^m}{\partial \phi} + \frac{\cos\theta}{\sin\theta} \frac{E_{n,\hat{\bm{\theta}}}^m}{r} \notag \\
	= & \frac{j_n\left(k_n r\right)}{r} \left(\frac{\cos\theta}{\sin^2\theta} \frac{\partial Y^m_n\left(\theta,\phi\right)}{\partial \phi} - \frac{1}{\sin\theta} \frac{\partial^2 Y^m_n\left(\theta,\phi\right)}{\partial \theta \partial \phi}\right). \notag
\end{align}
By \eqref{eq_bl_sinYnmnn+1}, we have
\begin{align} \label{eq_grad_gradEnmLinfty}
	& \left\|\frac{\mathrm{i}}{b_n^m} \nabla\bm{E}_n^m\left(\bm{x}\right)\right\|_{L^{\infty}\left(\Omega\setminus\overline{\Omega_{\tau}}\right)^{3\times3}}^2 \notag \\
	\geq & \sup_{\bm{x}\in\Omega\setminus\overline{\Omega_{\tau}}} \frac{1}{2} \left|E_{5,n,m} - E_{7,n,m}\right|^2 \notag \\
	= & \sup_{\bm{x}\in\Omega\setminus\overline{\Omega_{\tau}}} \frac{1}{2} \left|\frac{j_n\left(k_n r\right)}{r}\right|^2 \left|\frac{\cos\theta}{\sin\theta} \frac{\partial Y^m_n\left(\theta,\phi\right)}{\partial \theta} + \frac{1}{\sin^2\theta} \frac{\partial^2 Y^m_n\left(\theta,\phi\right)}{\partial \phi^2} + \frac{\partial^2 Y^m_n\left(\theta,\phi\right)}{\partial \theta^2}\right|^2 \notag \\
	= & \sup_{\bm{x}\in\Omega\setminus\overline{\Omega_{\tau}}} \frac{1}{2} n^2\left(n+1\right)^2 \left|\frac{j_n\left(k_n r\right)}{r}\right|^2 \left|Y^m_n\left(\theta,\phi\right)\right|^2.
\end{align}
For $r_n$ given by \eqref{def_grad_rn}, the estimate \eqref{eq_grad_jnknrn} yields 
\begin{align} \label{est_grad_jn/r}
	\sup_{r\in\left(\tau,1\right)} \left|\frac{j_n\left(k_n r\right)}{r}\right| \geq \left|j_n\left(k_n r_n\right)\right| = c_3 n^{-5/6} + O\left(n^{-11/6}\right).
\end{align}
If $m=0$, then $P_n\left(1\right)=1$, and hence
\begin{align*}
	\sup_{\theta\in\left[0,\pi\right], \phi\in\left[0,2\pi\right]} \left|Y^0_n\left(\theta,\phi\right)\right| \geq \sqrt{\frac{2n+1}{4\pi}}.
\end{align*} 
Therefore we obtain
\begin{align*}
	\left\|\frac{1}{b_n^m} \nabla\bm{E}_n^0\left(\bm{x}\right)\right\|_{L^{\infty}\left(\Omega\setminus\overline{\Omega_{\tau}}\right)^{3\times3}} \geq & \frac{n\left(n+1\right)}{\sqrt{2}} \left(c_3 n^{-5/6} + O\left(n^{-11/6}\right)\right) \sqrt{\frac{2n+1}{4\pi}} \\
	\geq & \frac{\Gamma\left(\frac{1}{3}\right)}{2^{13/6} 3^{1/6} \pi} n^{5/3} + O\left(n^{2/3}\right).
\end{align*}
Combining this with \eqref{est_grad_Enm}, we obtain \eqref{est_grad_gradEn0}.

If $m\neq0$, then for all sufficiently large $n$ one has $\lvert m \rvert \leq n$. Using \eqref{est_grad_Pnm}, \eqref{eq_grad_gradEnmLinfty} and \eqref{est_grad_jn/r}, we deduce
\begin{align*}
	\left\|\frac{1}{b_n^m} \nabla\bm{E}_n^m\left(\bm{x}\right)\right\|_{L^{\infty}\left(\Omega\setminus\overline{\Omega_{\tau}}\right)^{3\times3}} \geq & \frac{n\left(n+1\right)}{\sqrt{2}} \left(c_3 n^{-5/6} + O\left(n^{-11/6}\right)\right) \sqrt{\frac{2n+1}{8.88\pi\left(\left|m\right|+1\right)}} \\
	\geq & \frac{\Gamma\left(\frac{1}{3}\right)}{\left(1.11\right)^{1/2} 2^{19/6} 3^{1/6} \pi} \frac{n^{5/3}}{\left|m\right|} + O\left(\frac{n^{2/3}}{\left|m\right|}\right).
\end{align*}
Combining this with \eqref{est_grad_Enm}, we obtain \eqref{est_grad_gradEnm}. 

The proof is complete.
\end{proof}

\begin{remark}
We make several remarks on the estimates in Theorem \ref{thm4.1}. First, recall that $r_n \to 1^-$ as $n \to \infty$. Hence for any fixed $\tau\in\left(0,1\right)$, one has $r_n\in\left(\tau,1\right)$ for all sufficiently large $n$. Consequently, the leading constants in the lower bounds \eqref{est_grad_gradEn0}--\eqref{est_grad_gradHnm} are independent of the choice of $\tau$.
	
Second, the asymptotic growth rate of $\bm{E}_n^0$ in \eqref{est_grad_gradEn0} is higher than that of $\bm{H}_n^0$ in \eqref{est_grad_gradHn0}, whereas the growth rates are of the same order in the case $m\neq0$. This difference stems from the fact that the estimate for $\bm{E}_n^0$ exploits \eqref{eq_bl_sinYnmnn+1}, which captures the factor $n(n+1)$ arising from the spherical Laplacian. If one applies a similar argument to $\bm{H}_n^0$, through the components $H_{5,n,m}$ and $H_{9,n,m}$, the resulting radial factor becomes
\begin{align*}
	R_n(r) := \frac{n-1}{k_n r^2} j_n\left(k_n r\right) - \frac{1}{r} j_{n+1}\left(k_n r\right).
\end{align*}
Since it is nontrivial to derive a sharp lower bound for $\sup_{r\in(\tau,1)}|R_n(r)|$, we instead use a different estimation method for $\bm{H}_n^0$, which leads to a weaker growth rate compared to $\bm{E}_n^0$. 
	
Finally, concerning the choice of components, our proof relies on $H_{2,n,0}$ in the case $m=0$ and on $H_{3,n,m}$ in the case $m\neq0$. Although some other components have the same radial asymptotic order at $r_n$, they are not used because their angular parts are either more technically involved to estimate by the present method, or yield strictly weaker lower bounds than those obtained from $H_{2,n,0}$ and $H_{3,n,m}$. These two components are therefore sufficient to establish the stated blow-up rates. We also remark that sharper $L^2(\Omega)$-norm estimates would yield sharper lower bounds.
\end{remark}

\section{Conclusions and future work} \label{section_5}

In this paper, we have investigated the EEITEP \eqref{eq_intro_EHuOmega} arising from the non-scattering phenomenon of an elastic body under electromagnetic probing. For a general bounded Lipschitz domain, we have proved that the set of positive transmission eigenvalues, if non-empty, is discrete, with $\infty$ as its only possible accumulation point. Since every positive non-scattering wavenumber of the elastic-electromagnetic scattering problems \eqref{eq_intro_EE_R3} and \eqref{eq_intro_SMRC} corresponds to a positive transmission eigenvalue of the EEITEP \eqref{eq_intro_EHuOmega}, while the converse implication does not necessarily hold, the discreteness result obtained for transmission eigenvalues applies to a larger spectral set and therefore implies the discreteness of positive non-scattering wavenumbers. Consequently, the positive non-scattering wavenumbers in the elastic-electromagnetic scattering problems can only form a discrete set, with $\infty$ as their only possible accumulation point. In the radially symmetric domain, we have further established the existence of a sequence of transmission eigenvalues and derived their asymptotic behavior as they tend to infinity. The corresponding electromagnetic components are shown to be localized near the boundary and to exhibit quantitative gradient blow-up, whereas the elastic displacement field remains globally distributed throughout the domain. These results reveal the boundary localization and high oscillation structure of the electromagnetic transmission eigenfunctions that is relevant to invisibility and inverse shape reconstruction in elastic-electromagnetic scattering. 

Although the analysis of boundary localization in the present work is restricted to the radially symmetric setting, analogous boundary-localized transmission eigenfunctions have been observed in extensive numerical experiments for a variety of non-radial scatterers in the scalar acoustic model \cite{chow2021surface}. In light of this numerical evidence and the localization mechanism identified in the present analysis, we believe that the boundary localization phenomenon of the electromagnetic components in the EEITEP \eqref{eq_intro_EHuOmega} persists for general elastic scatterers and is not an artifact of spherical symmetry. The rigorous analysis and numerical verification will be pursued in future work.

The boundary localization of electromagnetic transmission eigenfunctions motivates further investigation of the inverse problem of reconstructing the support of an unknown elastic scatterer from multi-frequency electromagnetic far-field measurements. In particular, these boundary-localized electromagnetic components suggest a potential spectral mechanism for developing super-resolution imaging methods in elastic-electromagnetic scattering. More precisely, for the scattering problem \eqref{eq_intro_EE_R3} and \eqref{eq_intro_SMRC} with a prescribed wavenumber interval $I\subset\mathbb{R}_{+}$, consider the following geometrical inverse shape problem: 
\begin{align} \label{eq:IP}
    (\bm{E}^{\infty}(\hat{\bm{x}}, \bm{d}, \bm{p};k), \bm{H}^{\infty}(\hat{\bm{x}}, \bm{d}, \bm{p};k))\mapsto \partial \Omega, \quad \hat{\bm{x}},\bm{d}\in\mathbb{S}^{2},\ \bm{p}\in\mathbb{R}^{3},\ k\in I,
\end{align}
where $\bm{E}^{\infty}$ and $\bm{H}^{\infty}$ are defined in \eqref{eq_Einfty} and \eqref{eq_Hinfty}, and the reconstruction does not require a priori knowledge of the elastic parameters $\lambda$, $\mu$, and $\rho$. The spectral results obtained in the present work suggest two complementary directions for the inverse problem \eqref{eq:IP}. 

First, the discreteness of the positive transmission eigenvalues makes it possible, in principle, to select probing wavenumbers away from the transmission spectrum. If the classical linear sampling method (LSM) framework can be established for the present elastic-electromagnetic scattering problem \eqref{eq_intro_EE_R3} and \eqref{eq_intro_SMRC}, then measurements taken at wavenumbers outside the transmission spectrum can be used to obtain a conventional reconstruction of the support of the elastic scatterer.

Second, rather than avoiding transmission eigenvalues, we propose to exploit the transmission eigenvalues together with their associated boundary-localized electromagnetic eigenfunctions, in order to investigate a potential super-resolution imaging mechanism. A related imaging scheme has been developed for time-harmonic electromagnetic medium scattering in \cite{he2024invisibility}, where Maxwell transmission eigenvalues are detected from electric far-field data, the corresponding transmission eigenfunctions are approximated by Maxwell-Herglotz waves, and an imaging functional is constructed from their geometric structures. In the present setting, however, the measured electromagnetic far-field patterns are generated by a coupled elastic-electromagnetic scattering system \eqref{eq_intro_EE_R3} and \eqref{eq_intro_SMRC}, and the interior transmission eigenfunctions are governed by the EEITEP \eqref{eq_intro_EHuOmega}. Hence, extending the above electromagnetic imaging strategy to the recovery of elastic scatterers requires new analytical and numerical investigations. The corresponding reconstruction procedure can be organized into the following three phases.

\medskip

\textit{Phase I: Identification of candidate transmission eigenvalues from far-field data.}                  

Let $I\subset \mathbb{R}_{+}$ be a prescribed interval of probing wavenumbers. By the electromagnetic far-field data in \eqref{eq:IP}, we seek to identify the elastic-electromagnetic interior transmission eigenvalues contained in $I$ based on the mechanism of LSM \cite{haddar2002linear}. 

Define 
\begin{align*}
    L_t^{2}(\mathbb{S}^{2}):=\left\{\bm{g}\in L^{2}(\mathbb{S}^{2})^{3};\,\bm{g}(\bm{d})\cdot \bm{d}=0\text{ for } \bm{d}\in\mathbb{S}^{2}\right\}.
\end{align*}
For $\bm{g}\in L_t^{2}(\mathbb{S}^{2})$ and $k\in I$, let the electric far-field operator $\mathcal{F}_k^E:L_t^{2}(\mathbb{S}^{2})\longrightarrow L_t^{2}(\mathbb{S}^{2})$
be defined as
\begin{align*}
    (\mathcal{F}_k^E\bm{g})(\hat{\bm{x}}):=\int_{\mathbb{S}^{2}}\bm{E}^{\infty}\bigl(\hat{\bm{x}},\bm{d},\bm{g}(\bm{d});k\bigr)\,\mathrm{d}s(\bm{d}),\qquad\hat{\bm{x}}\in\mathbb{S}^{2}.
\end{align*}
Associated with $\bm{g}_k\in L_t^{2}(\mathbb{S}^{2})$, the electromagnetic Herglotz pair is given by
\begin{align*}
    &\bm{E}_{\bm{g}_k,k}^{\mathrm{in}}(\bm{x})=\mathrm{i}k\int_{\mathbb{S}^{2}}e^{\mathrm{i}k\bm{x}\cdot\bm{d}}\bm{g}_k(\bm{d})\,\mathrm{d}s(\bm{d}),\qquad\bm{x}\in\mathbb{R}^{3},\\
    &\bm{H}_{\bm{g}_k,k}^{\mathrm{in}}(\bm{x})=\frac{1}{\mathrm{i}k}\mathbf{curl}\ \bm{E}_{\bm{g}_k,k}^{\mathrm{in}}(\bm{x})=\mathrm{i}k\int_{\mathbb{S}^{2}}e^{\mathrm{i}k\bm{x}\cdot\bm{d}}\bigl(\bm{d}\times\bm{g}_k(\bm{d})\bigr)\,\mathrm{d}s(\bm{d}),\qquad\bm{x}\in\mathbb{R}^{3}.
\end{align*}
Then $\mathcal{F}_k^E\bm{g}_k$ is electric far-field pattern of the scattered field generated by $\left(\bm{E}_{\bm{g},k}^{\mathrm{in}},\bm{H}_{\bm{g},k}^{\mathrm{in}}\right)$ as the incident field. For a polarization vector $\bm{q}\in\mathbb{S}^2$, let
\begin{align*}
    \bm{E}_{e,k}^{\infty}(\hat{\bm{x}},\bm{z},\bm{q}):=\frac{\mathrm{i}k}{4\pi}(\hat{\bm{x}} \times \bm{q})e^{-\mathrm{i}k\hat{\bm{x}} \cdot \bm{z}},
\end{align*}
denote the electric far-field pattern of an electric dipole located at $\bm{z}$. Assuming that an interior sampling point $\bm z_0\in\Omega$ is available, one may consider the far-field equation 
\begin{align} \label{eq_ffeq}
    \left(\mathcal{F}_k^E \bm{g}_{k}(\cdot,\bm{z}_0,\bm{q})\right)(\hat{\bm{x}})=\bm{E}_{e,k}^{\infty}(\hat{\bm{x}},\bm{z}_0,\bm{q}).
\end{align}
For the time-harmonic electromagnetic medium scattering, it shows that the $L^2$-norm of the associated electric Maxwell-Herglotz wave function generated by a regularized solution of the corresponding far-field equation remains bounded for almost every interior sampling point, as the regularization parameter tends to zero, if and only if $k$ is not a Maxwell transmission eigenvalue \cite{he2024invisibility}. This result provides the theoretical basis for detecting Maxwell transmission eigenvalues by monitoring the growth of the corresponding Herglotz kernels. Motivated by this mechanism, for the elastic-electromagnetic scattering problem \eqref{eq_intro_EE_R3} and \eqref{eq_intro_SMRC}, one may compute a suitable regularized solution $\bm{g}_{k,\varepsilon}(\cdot,\bm{z}_0,\bm{q})$ of \eqref{eq_ffeq} and examine the behavior of $\left\| \bm{g}_{k,\varepsilon}(\cdot,\bm{z}_0,\bm{q}) \right\|_{L_t^{2}(\mathbb{S}^2)}$ as a function of $k\in I$. If an analogous LSM characterization can be established for the scattering problem \eqref{eq_intro_EE_R3} and \eqref{eq_intro_SMRC}, namely, if $\left\| \bm{E}_{\bm{g}_{k,\varepsilon},k}^{\mathrm{in}}(\bm{x})\right\|_{L^2(\Omega)^3}$ remains bounded as $\varepsilon\to0$ when $k$ is not an elastic-electromagnetic transmission eigenvalue and fails to remain bounded as $\varepsilon\to0$ when $k$ is an elastic-electromagnetic transmission eigenvalue, then the growth of $\left\| \bm{g}_{k,\varepsilon}(\cdot,\bm{z}_0,\bm{q}) \right\|_{L_t^{2}(\mathbb{S}^2)}$ as a function of $k\in I$ may serve as indicators of candidate elastic-electromagnetic transmission eigenvalues. The rigorous establishment of this characterization is left for future work.

\medskip 

\textit{Phase II: Approximation of boundary-localized electromagnetic transmission eigenfunctions.}  

Let $\Lambda_L=\{k_1,k_2,\ldots,k_L\}$ be the collection of candidate transmission eigenvalues identified in Phase I. In the time-harmonic electromagnetic medium scattering, it has been shown that, when $k$ is a Maxwell transmission eigenvalue, there exists a Maxwell-Herglotz incident field whose induced scattered far-field pattern is nearly vanishing and whose restriction to the scatterer approximates a corresponding Maxwell transmission eigenfunction \cite{he2024invisibility}. Motivated by this mechanism, we propose to seek analogous nearly non-scattering incident electromagnetic fields for the elastic-electromagnetic scattering problem \eqref{eq_intro_EE_R3} and \eqref{eq_intro_SMRC}.

Let $D$ be a known bounded domain containing $\Omega$. For each $k_l\in\Lambda_L$, one may seek the Herglotz kernel
$\bm g_{k_l}\in L_t^{2}(\mathbb{S}^{2})$ by considering the optimization problem
\begin{align*}
    \min_{\bm{g}\in L_t^{2}(\mathbb{S}^{2})}\left\|\mathcal{F}_{k_l}^E\bm{g}\right\|_{L_t^{2}(\mathbb{S}^{2})}\quad\text{subject to}\quad\left\|\bm E_{\bm{g},k_l}^{\mathrm{in}}\right\|_{H(\mathbf{curl}, D)}=1.
\end{align*}
The minimization of $\|\mathcal{F}_{k_l}^{E}\bm g\|_{L_t^{2}(\mathbb{S}^{2})}$ is intended to produce an incident electromagnetic field whose associated scattered electric far-field pattern is nearly vanishing, while the normalization constraint excludes the trivial field. If an analogue of the Maxwell-Herglotz approximation result can be established, then a suitable regularized approximate minimizer $\bm g_l$ is expected to satisfy 
\begin{align*}
    \left(
    \bm E_{\bm g_l,k_l}^{\mathrm{in}},
    \bm H_{\bm g_l,k_l}^{\mathrm{in}}
    \right)\big|_{\Omega}
    \approx
    \left(
    \bm E_{k_l},\bm H_{k_l}
    \right),
\end{align*}
where $(\bm E_{k_l},\bm H_{k_l})$ are the electromagnetic components of the EEITEP \eqref{eq_intro_EHuOmega} associated with $k_l$. Since $(\bm E_{k_l},\bm H_{k_l})$ obtained in the present work are localized near $\partial\Omega$, the reconstructed Maxwell-Herglotz wave functions are expected to inherit the same boundary-concentration behavior. This provides the basis for constructing the imaging indicator in Phase III. The rigorous justification and stability analysis of this approximation procedure for the system \eqref{eq_intro_EE_R3} and \eqref{eq_intro_SMRC} are left for future work.

\medskip 

\textit{Phase III: Imaging of the scatterer based on boundary-localized electromagnetic transmission eigenfunctions.}  

For the candidate transmission eigenvalues $\Lambda_L=\{k_1,k_2,\ldots,k_L\}$ and the associated Maxwell-Herglotz kernels $\bm{g}_{k_l}$, $l=1,2,\ldots,L$, motivated by the imaging strategy developed in the acoustic and Maxwell electromagnetic settings \cite{chow2021surface,he2024invisibility}, we consider a candidate imaging functional
\begin{align*}
    I_{\Lambda_L}^{\mathrm{E}}\left(\bm{z}\right)=-\ln \sum_{l=1}^L \left|\bm E_{\bm{g}_{k_l},k_l}^{\mathrm{in}}\left(\bm{z}\right)\right|.
\end{align*}
If the reconstructed Maxwell-Herglotz wave functions $\left(\bm E_{\bm{g}_{k_l},k_l}^{\mathrm{in}},\bm H_{\bm{g}_{k_l},k_l}^{\mathrm{in}}\right)$ approximate the boundary-localized transmission eigenfunctions, then they are expected to be concentrated near the boundary of the elastic scatterer $\Omega$. Consequently, $I_{\Lambda_L}^{\mathrm{E}}\left(\bm{z}\right)$ is expected to exhibit a contrast between the interior domain and the neighborhood of the boundary, which can be used to identify the boundary of the scatterer.

The quantitative lower bounds for the $L^{\infty}$-norms of the electromagnetic gradients derived in this paper further indicate that the boundary-localized electromagnetic components of the transmission eigenfunctions are sensitive to the boundary. However, these lower bounds only ensure large gradient magnitude at certain points and do not imply uniform pointwise enhancement along the entire boundary. Hence, they do not directly yield a gradient-based imaging functional for recovering $\partial\Omega$. Their precise role in enhancing the resolution of the above imaging scheme therefore remains to be investigated. Establishing the detection mechanism of transmission eigenvalues in Phase I, the Maxwell-Herglotz approximation property in Phase II, and the stability and numerical performance of the candidate indicator in Phase III will be pursued in future work.

\section*{Acknowledgments}
The work of H. Diao is supported by National Natural Science Foundation of China  (No. 12371422), the Fundamental Research Funds for the Central Universities, JLU. The work of H. Liu is supported by the Hong Kong RGC General Research Funds (projects 11311122, 11300821, and 11303125), the NSFC/RGC Joint Research Fund (project N\_CityU101/21), the France-Hong Kong ANR/RGC Joint Research Grant, A-CityU203/19.

\end{document}